\setlist[enumerate]{(a), leftmargin=*}
\numberwithin{equation}{section}
\DeclareMathAlphabet{\mathsfit}{T1}{\sfdefault}{\mddefault}{\sldefault}
\SetMathAlphabet{\mathsfit}{bold}{T1}{\sfdefault}{}{\sldefault}
\renewcommand{\bar}[1]{\overline{#1}}
\newcommand\bLambda{\mathbf{\Lambda}}
\newcommand\bK{\mathbf{K}}
\newcommand\bU{\mathbf{U}}
\newcommand\bV{\mathbf{V}}
\newcommand\C{\mathbb{C}}
\newcommand\ev{\mathrm{ev}}
\newcommand\fg{\mathfrak{g}}
\newcommand\fh{\mathfrak{h}}
\newcommand\fn{\mathfrak{n}}
\newcommand\fa{\mathfrak{a}}
\newcommand\fb{\mathfrak{b}}
\newcommand\fsl{\mathfrak{sl}}
\newcommand\fosp{\mathfrak{osp}}
\newcommand\fgl{\mathfrak{gl}}
\newcommand\fz{\mathfrak{z}}
\newcommand\id{\mathrm{id}}
\newcommand{\kteta}{{\mathsf{k}_\Theta}}
\newcommand\sm{\mathsf{m}}
\newcommand\sk{\mathsf{k}}
\newcommand\sL{\mathsfit{L}}
\newcommand\sM{\mathsfit{M}}
\newcommand\sN{\mathsfit{N}}
\newcommand\sP{\mathsfit{P}}
\newcommand\sV{\mathsfit{V}}
\newcommand\Z{\mathbb{Z}}
\DeclareMathOperator{\Ann}{Ann}
\DeclareMathOperator{\ch}{ch}
\DeclareMathOperator{\Ext}{Ext}
\DeclareMathOperator{\HH}{H}
\DeclareMathOperator{\Hom}{Hom}
\DeclareMathOperator{\MaxSpec}{MaxSpec}
\DeclareMathOperator{\Spec}{Spec}
\DeclareMathOperator{\sdim}{sdim}
\DeclareMathOperator{\sch}{sch}
\DeclareMathOperator{\Supp}{Supp}
\theoremstyle{plain}
\newtheorem{theo}{Theorem}[section]
\newtheorem*{theo*}{Theorem}
\newtheorem{prop}[theo]{Proposition}
\newtheorem{lem}[theo]{Lemma}
\newtheorem{cor}[theo]{Corollary}
\theoremstyle{definition}
\newtheorem*{rem*}{Remark}
\newtheorem{rem}[theo]{Remark}
\newcommand{\details}[1]{{\color{OliveGreen}\noindent\textbf{Details:}{#1}}}
                  \newcommand{\details}[1]{}
\begin{document}
%
%%%%%%%%%%%%%%%%%%%%%%%%%%%%%%%%%%%

\title{Finite-dimensional representations of map superalgebras}

\author{Lucas Calixto}
\address{Department of Mathematics\\
		   Federal University of Minas Gerais\\
		   Belo Horizonte \\
		   Brazil}
		   \email{lhcalixto@ufmg.br}	
\author{Tiago Macedo}
\address{Instituto de Ci\^encia e Tecnologia \\
		  Universidade Federal de S\~ao Paulo \\
		  S\~ao Jos\'e dos Campos \\
		  Brazil}
		  \email{tmacedo@unifesp.br}

\keywords{Lie superalgebras, irreducible modules, homological methods}

\subjclass[2010]{17B10, 17B55, 17B65}

\begin{abstract}
We obtain a complete classification of all finite-dimensional irreducible modules over classical map superalgebras, provide formulas for their (super)characters and a description of their extension groups. Furthermore, we describe the block decomposition of the category of finite-dimensional modules for such map superalgebras.  As an application, we specialize our results to the case of loop superalgebras in order to obtain a classification of finite-dimensional irreducible modules and block decomposition of the category of finite-dimensional modules over affine Lie superalgebras.
\end{abstract}

\maketitle

\thispagestyle{empty}

\tableofcontents

%%%%%%%%%%%%%%%%%%%%%%%%%%
\section*{Introduction} %%
%%%%%%%%%%%%%%%%%%%%%%%%%%
\label{sec:intro}

Given a finite-dimensional Lie superalgebra $\fg$ and an affine scheme of finite type $X$, both defined over $\C$, if one also considers $\fg$ as an affine space, then the set $M(X, \fg)$, of regular maps from $X$ to $\fg$, inherits a structure of Lie superalgebra from $\fg$.  These Lie superalgebras $M(X,\fg)$ are known as \emph{map superalgebras} and they admit an algebraic realization. Namely, there is an isomorphism of Lie superalgebras between $M(X,\fg)$ and the Lie superalgebra whose underlying vector space is the tensor product $\fg\otimes_\C \mathcal O(X)$, the $\Z_2$-grading is given by $(\fg \otimes_\C \mathcal O(X))_{\bar0} := \fg_{\bar0} \otimes_\C \mathcal O(X)$, $(\fg \otimes_\C \mathcal O(X))_{\bar1} := \fg_{\bar1} \otimes_\C \mathcal O(X)$, and the Lie superbracket is given by bilinearly extending
    \[
    [x \otimes a, \, y \otimes b]
    = [x, y] \otimes ab,
    \qquad \textup{for all } \ x, y \in \fg, \ a, b \in \mathcal O(X).
    \]
In this paper, we denote the map superalgebra $\fg \otimes_\C \mathcal O(X)$ by $\fg[\mathcal O(X)]$.

Map superalgebras encompass several superalgebras that, over the years, have been the subject of research.  Amongst those, current and loop superalgebras, which correspond respectively to the affine line ($X = \Spec \C[t]$) and the circle ($X = \Spec \C[t, t^{-1}]$), are particularly important to us.  They are closely related to affine superalgebras, which have applications to number theory, string theory, conformal (quantum) field theory (see, for instance, \cite{Kac78b, KW94, KW01, GO86, GOW87}), and are defined as follows.

Given a basic classical Lie superalgebra $\fg$ endowed with a non-degenerate, invariant, even, supersymmetric, bilinear form $(\cdot, \cdot)$, the affine superalgebra $\hat\fg$ contains an element $c$ such that: as a vector space, $\hat \fg = \fg[\C[t, t^{-1}]] \oplus \C c$; the $\Z_2$-grading on $\hat\fg$ is given by $\hat\fg_{\bar0} = \fg[\C[t, t^{-1}]]_{\bar0} \oplus \C c$, $\hat\fg_{\bar1} = \fg[\C[t, t^{-1}]]_{\bar1}$; and the Lie superbracket is given by bilinearly extending
\begin{equation} \label{eq:aff.sup}
[x \otimes t^n + \lambda c, \, y \otimes t^m + \mu c]
= [x,y] \otimes t^{n+m} + \delta_{n, -m} n (x,y) c,
\end{equation}
for all $x, y \in \fg$, $n, m \in \Z$, $\lambda, \mu \in \C$.  (Caveat: affine Lie superalgebras, as defined above, may be considered to be the derived subalgebras of affine Kac-Moody superalgebras, in some other papers.)

If $V$ is a finite-dimensional irreducible module for an affine Lie superalgebra $\hat\fg$, then $c \cdot V = 0$.  Hence, $V$ descends to a $\fg[\C[t, t^{-1}]]$-module, which continues to be irreducible.  Conversely, every finite-dimensional irreducible $\fg[\C[t, t^{-1}]]$-module can be inflated to a $\hat\fg$-module and the resulting $\hat\fg$-module remains irreducible.  Thus, the problem of classifying finite-dimensional irreducible modules for affine Lie superalgebras is equivalent to that for loop superalgebras.  In this paper, we approach this problem from the point-of-view of map superalgebras.

Finite-dimensional irreducible modules for classical basic map superalgebras (which are not map algebras) were classified by A.~Savage in \cite{Savage14}, L.~Calixto, A.~Moura, A.~Savage in \cite{CMS16}, and the authors in \cite{CM19}.  This classification is given in terms of evaluation and generalized evaluation modules.  Evaluation modules are as well-understood as finite-dimensional irreducible modules for finite-dimensional simple Lie superalgebras are.  However, this is not the case for gen\-er\-al\-ized evaluation modules.

The first goal of this paper is to describe all irreducible generalized evaluation modules. Using our description, we present formulas for their (super)characters and describe their extensions.  As applications, we describe the block decomposition of the categories of finite-dimensional modules for classical map and affine superalgebras.  We also establish an explicit relation between the highest weights of a finite-dimensional irreducible module relative to distinct choices of Borel subalgebras.  We now describe our main results in more detail.

Let $A$ be an associative, commutative, finitely-generated, unital $\C$-algebra (that is, $A = \mathcal O(X)$ for an affine scheme of finite type $X = \Spec(A)$), $n, k_1, \dotsc, k_n \in \Z_{>0}$, and $\sm_1, \dotsc, \sm_n$ be maximal ideals of $A$.  For each $i \in \{1, \dotsc, n\}$, let $\rho_i \colon \fg[A/\sm_i^{k_i}] \to \fgl(V_i)$ be a representation of $\fg[A/\sm_i^{k_i}]$ on $V_i$, and $\ev_{\sm_i^{k_i}} \colon A \twoheadrightarrow A/\sm_i^{k_i}$ be the canonical projection.  Define a $\fg[A]$-module structure on $V_1 \otimes \dotsm \otimes V_n$ by setting its associated representation to be
    \[
    \rho_1 \circ (\id_{\fg} \otimes \ev_{\sm_1^{k_1}}) \otimes \dotsm \otimes \rho_n \circ (\id_{\fg} \otimes \ev_{\sm_n^{k_n}}) \colon \fg[A] \to \fgl(V_1 \otimes \dotsm \otimes V_n).
    \]
These $\fg[A]$-modules will be denoted by $V_1^{\sm_1^{k_1}} \otimes \dotsm \otimes V_n^{\sm_n^{k_n}}$. Such modules will be called \emph{evaluation modules} when $k_i=1$ for all $i \in \{1, \dotsc, n\}$, and \emph{generalized evaluation modules} otherwise.

Every finite-dimensional irreducible $\fg[A]$-module is isomorphic to a tensor product of evaluation and generalized evaluation modules (see \cite{CFK10, NSS12, Savage14, CMS16, CM19}). Moreover, when $\fg_{\bar 0}$ is a semisimple Lie algebra, then every finite-dimensional irreducible $\fg[A]$-module is isomorphic to an evaluation module.  This is not true if $\fg_{\bar 0}$ is a reductive non-semisimple Lie algebra and, in particular, if $\fg$ is a basic classical Lie superalgebra of type~I.

If $\fg$ is a basic classical Lie superalgebra of type I, it is known that $\fg$ admits a consistent $\Z$-grading, $\fg = \fg_{-1} \oplus \fg_0 \oplus \fg_1$, where $\fg_{\pm 1}$ are irreducible $\fg_0$-modules, $\fg_0 = \fg'_0 \oplus \fz$ is a reductive Lie algebra with semisimple component $\fg'_0$ and center $\fz := \fg_0/\fg'_0$ \cite[\S1]{Kac78}. Let $\fg_\pm := \fg_0 \oplus \fg_{\pm1}$.  Given a quotient algebra $B$ of $A$ and a $\fg_0[B]$-module $\sM$, consider it as a $\fg_+[B]$-module by letting $\fg_1[B]\sM=0$.  The induced $\fg[B]$-module
	\begin{equation} \label{def:kac-like}
    \bK_B(\sM) := \bU(\fg[B])\otimes_{\bU(\fg_{_+}[B])} \sM
	\end{equation}
is naturally a $\fg[A]$-module via the projection from $\fg[A]$ to $\fg[B]$. We call this module a \emph{Kac-like module}. One can check that, if $\sL$ is a finite-dimensional irreducible $\fg_0[B]$-module, then $\bK_B(\sL)$ admits a unique irreducible quotient, which will be denoted by $\bV_B(\sL)$.  Moreover, it follows from \cite[Theorem~7.2]{Savage14} that every finite-dimensional irreducible $\fg[A]$-module is isomorphic to $\bV_A(\sL)$, for some finite-dimensional irreducible $\fg_0[A]$-module $\sL$.

In Section~\ref{sec:class}, we describe all generalized evaluation modules in terms of tensor products of certain Kac-like modules. This enables us to classify all irreducible finite-dimensional $\fg[A]$-modules. Our fist main result is the following (see Theorem~\ref{theo:class.irreps}):

\begin{theo*}
If $L$ is a finite-dimensional irreducible $\fg[A]$-module, then $L$ is isomorphic to a tensor product of Kac-like modules and evaluation modules.
\end{theo*}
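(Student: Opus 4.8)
The plan is to combine the known coordinatewise classification of finite-dimensional irreducible modules with the structure theory of Kac-like modules. By \cite{CFK10, NSS12, Savage14, CMS16, CM19}, any finite-dimensional irreducible $\fg[A]$-module $L$ is isomorphic to a tensor product $V_1^{\sm_1^{k_1}} \otimes \dotsm \otimes V_n^{\sm_n^{k_n}}$ of evaluation and generalized evaluation modules, with $\sm_1, \dotsc, \sm_n$ pairwise distinct maximal ideals of $A$, $k_1, \dotsc, k_n \in \Z_{>0}$, and each $V_i$ a finite-dimensional irreducible $\fg[A/\sm_i^{k_i}]$-module; each factor with $k_i = 1$ is, by definition, an evaluation module. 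So the theorem reduces to a single-point statement: every finite-dimensional irreducible $\fg[A/\sm^k]$-module $V$ must be shown to be a tensor product of Kac-like modules and evaluation modules supported at $\sm$.

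For the single-point statement I would first apply \cite[Theorem~7.2]{Savage14} to the ring $A/\sm^k$ --- which is again an associative, commutative, finitely-generated, unital $\C$-algebra --- to get a finite-dimensional irreducible $\fg_0[A/\sm^k]$-module $\sL$ with $V \cong \bV_{A/\sm^k}(\sL)$, the unique irreducible quotient of the Kac-like module $\bK_{A/\sm^k}(\sL)$; pulling back along $\ev_{\sm^k}$, which is exact and preserves irreducibility, transports this to $\fg[A]$. To produce the evaluation factor, one then decomposes $\sL$ using $\fg_0 = \fg'_0 \oplus \fz$: the restriction of $\sL$ to $\fg'_0[A/\sm^k]$ is a finite-dimensional irreducible module, hence --- as $\fg'_0$ is semisimple and $A/\sm^k$ is local --- an evaluation module factoring through $\fg'_0$, while $\fz[A/\sm^k]$ acts through a linear functional $\psi$. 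Writing $A/\sm^k = \C \oplus (\sm/\sm^k)$, the restriction of $\psi$ to the residue field $\C$ combines with the $\fg'_0$-datum into an ordinary weight of $\fg_0$ --- an irreducible $\fg$-module at $\sm$, i.e. an evaluation module --- while the restriction of $\psi$ to the nilpotent ideal $\sm/\sm^k$ is the extra jet information that the Kac-like factor carries. One must then verify that $\bV_{A/\sm^k}(\sL)$ indeed factors as the tensor product of these two pieces, and finally reassemble over all $i$.

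The passage from the single-point statement to the general theorem is purely formal; the main obstacle, I expect, is the single-point splitting itself. The subtlety is that $\fz[A/\sm^k]$, although central in $\fg_0[A/\sm^k]$, is \emph{not} central in $\fg[A/\sm^k]$, since $[\fz, \fg_{\pm 1}] \neq 0$; hence the nilpotent central datum cannot simply be factored out of $\bV_{A/\sm^k}(\sL)$. Instead one has to control how it propagates through the Kac-like induction $\bU(\fg[A/\sm^k]) \otimes_{\bU(\fg_+[A/\sm^k])} (-)$ and how it interacts with the atypicality contributed by the $\fg'_0$-part --- in particular, deciding when the induced module is already irreducible and when one must pass to its irreducible quotient. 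This analysis of the individual generalized evaluation modules is the real content; Theorem~\ref{theo:class.irreps} is essentially its repackaging together with the cited coordinatewise classification.
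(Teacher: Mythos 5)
Your first reduction is the same as the paper's: by \cite[Lemma~6.3(c), Theorem~7.2]{Savage14}, $L$ factors as a tensor product over pairwise distinct maximal ideals of irreducible $\fg[A/\sm_i^{k_i}]$-modules, each of the form $\bV_{A/\sm_i^{k_i}}(\Theta_i\boxtimes\sV_i^{\sm_i})$. The problem is your single-point step. The theorem does \emph{not} assert that a genuinely generalized factor at $\sm$ splits further as (evaluation $\fg$-module at $\sm$) $\otimes$ (Kac-like module carrying only the jet part of $\Theta$); it asserts that this factor \emph{is} the single Kac-like module $\bK_{A/\kteta}(\Theta\boxtimes\sV^\sm)$, with $\Theta$ carrying both the residue and the jet data, the evaluation tensor factor $M$ in \eqref{eq:class.irred} collecting only the points where $k_i=1$. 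Your proposed splitting fails: by Proposition~\ref{prop:dim+char} the irreducible module has dimension $2^{\dim\fg_{-1}\dim(A/\kteta)}\dim\sV$, whereas a product $E\otimes K$ with $E$ a nontrivial evaluation $\fg$-module at $\sm$ and $K$ a Kac-like module built from the restriction of $\Theta$ to $\fz\otimes\sm$ has dimension $\dim E\cdot 2^{\dim\fg_{-1}\dim(A/\sk_{\Theta_{\mathrm{jet}}})}$ with $\dim E>\dim\sV$ (no nontrivial $\fg$-module restricts to an irreducible $\fg_0$-module, since $[\fg_{\bar1},\fg_{\bar1}]=\fg_{\bar0}$), and $\sk_{\Theta_{\mathrm{jet}}}\cap\sm=\kteta\cap\sm$, so the dimensions cannot match except in the trivial case. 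More structurally, tensor products of irreducibles supported at the \emph{same} maximal ideal are almost never irreducible, so this is the wrong shape of decomposition to aim for. Although the $\fg_0[A/\sm^k]$-character does split off the one-dimensional jet character $\Theta_{\mathrm{jet}}\boxtimes\C$, that character does not extend to a one-dimensional $\fg[A]$-module, so the splitting does not survive the Kac induction.

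The second, independent, issue is that the actual content of the theorem --- identifying $\bV_{A/\sm^k}(\Theta\boxtimes\sV^\sm)$ with an honest induced module --- is only announced in your proposal (``one must then verify\dots'', ``this analysis\dots is the real content'') but never carried out. The paper does this in Proposition~\ref{prop:main}: one shows that $\bK_{A/I}(\Theta\boxtimes\sV^\sm)$ is reducible whenever $I\subsetneq\kteta$ (an element $a\in\kteta\setminus I$ produces a new singular vector $(y_\alpha\otimes(a+I))\otimes v$), and is irreducible for $I=\kteta$ by combining Lemma~\ref{lem:silly} (every nonzero submodule contains $p\otimes v^+$ with $p$ spanning the top exterior power of $\fg_{-1}[A/\kteta]$) with the commutation relations of Lemma~\ref{lem:comm.rels} and the nonvanishing $\Theta(z\otimes\overline{T_1}^{n_1}\dotsm\overline{T_r}^{n_r})\ne0$ for maximal exponents, which shows $p^\star p\otimes v^+$ is a nonzero multiple of $v^+$. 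Without some version of this argument, you have not proved that the single-point factors are Kac-like modules, which is the whole point of the theorem.
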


By using this description we obtain formulas for (super)characters of every finite-dimensional irreducible $\fg[A]$-module (see Proposition~\ref{prop:dim+char}).  Notice further that, specializing these results to the case of loop superalgebras, one obtains a classification of finite-dimensional irreducible modules for affine Lie superalgebras associated to basic classical Lie superalgebras.

In Section~\ref{sec:exts}, we use Theorem~\ref{theo:class.irreps}, together with some cohomological methods (the most important one being the Lyndon-Hochschild-Serre spectral sequence), to compute extensions between finite-dimensional irreducible $\fg[A]$-modules. It has been proven in \cite[Theorem~5.7]{CM19} that it is enough to consider extensions between (generalized) evaluation modules evaluated at the same maximal ideal; the general case is obtained from this one via direct sums. Moreover, extensions between two evaluation modules were computed in \cite[Example~5.8]{CM19}. To tackle the remaining cases, in Theorem~\ref{thm:ext.kac} we prove our second main result, which is the following:

\begin{theo*}
Let $\sV_1, \sV_2$ be finite-dimensional irreducible $\fg'_0$-modules, let $\sm$ be a maximal ideal of $A$, let $\Theta_1, \Theta_2 \in \fz[A]^*$, assume that $\Theta_1(\fz \otimes \sm^n) = \Theta_2(\fz \otimes \sm^n) = 0$ for some $n \in \Z_{>0}$ and that either $\Theta_1(\fz \otimes \sm) \ne 0$ or $\Theta_2(\fz \otimes \sm) \ne 0$.
\begin{enumerate}
\item
If $\Theta_1 \ne \Theta_2$ and $\Theta_1(z) - \Theta_2(z) \in \Z_{\ge0}$, then
    \[
    \Ext^1_{\fg[A]} \left( \bV_A(\Theta_1 \boxtimes \sV_1^\sm),\, \bV_A(\Theta_2 \boxtimes \sV_2^\sm) \right)
    \cong \Hom_{\fg_0[A]} (\fg_{-1}[\sk_{\Theta_1}] \otimes (\Theta_1 \boxtimes \sV_1^\sm),\, \Theta_2 \boxtimes \sV_2^\sm).
    \]

\item
If $\Theta_1 \ne \Theta_2$ and $\Theta_2(z) - \Theta_1(z) \in \Z_{\ge0}$, then
    \begin{align*}
    \Ext^1_{\fg[A]} \left( \bV_A(\Theta_1 \boxtimes \sV_1^\sm),\, \bV_A(\Theta_2 \boxtimes \sV_2^\sm) \right)
    \cong {}&{} \Hom_{\fg_0[A]} (\fg_{-1}[\sk_{\Theta_2}] \otimes (\Theta_2 \boxtimes \sV_2^\sm),\, \Theta_1 \boxtimes \sV_1^\sm).
    \end{align*}
    
\item
If $\Theta_1(z) - \Theta_2(z) \notin \Z$, then $\Ext^1_{\fg[A]} \left( \bV_A(\Theta_1 \boxtimes \sV_1^\sm), \bV_A(\Theta_2 \boxtimes \sV_2^\sm) \right) = 0$.

\item
In the case where $\Theta_1 = \Theta_2$, we have $\Ext^1_{\fg[A]} \left( \bV_A(\Theta_1 \boxtimes \sV_1^\sm), \, \bV_A(\Theta_2 \boxtimes \sV_2^\sm) \right) = 0$ if and only if $\Ext^1_{\fg_0[A]} \left( \Theta_1 \boxtimes \sV_1^\sm, \, \Theta_2 \boxtimes \sV_2^\sm \right) = 0$.
\end{enumerate}
\end{theo*}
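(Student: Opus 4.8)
The strategy is to combine the relative homological algebra of the Kac functor $\bK_A$ with a close analysis of the action of the grading element $z\otimes 1\in\fz[A]$, reducing each of~(a)--(d) to a $\Hom$-computation in the category of $\fg_0[A]$-modules. Since both modules are generalized evaluation modules at the single maximal ideal $\sm$ and $\fg$ is perfect, an extension of two $\fg[A]$-modules on which $\fg\otimes\sm^N$ acts trivially has $\fg\otimes\sm^{2N}$ acting trivially, so we may assume that $A$ is a finite-dimensional local $\C$-algebra with maximal ideal $\sm$; in particular the Kac modules below are finite-dimensional. Write $\sL_i=\Theta_i\boxtimes\sV_i^\sm$. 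Two structural facts drive everything. First, $z\otimes 1$ acts semisimply on $\bK_A(\sL_i)$, hence on $\bV_A(\sL_i)$, with eigenvalues in $\Theta_i(z)-\Z_{\ge 0}$; the top eigenspace, of eigenvalue $\Theta_i(z)$, is $\sL_i$ as a $\fg_0[A]$-module and is annihilated by $\fg_1[A]$, while $\fg_{\pm 1}[A]$ shift the eigenvalue by $\mp 1$. Second, by the Künneth formula together with the vanishing of both $\Hom$ and $\Ext^1$ between two distinct characters of the abelian Lie algebra $\fz[A]$, one has $\Hom_{\fg_0[A]}(\sL_1,\sL_2)=\Ext^1_{\fg_0[A]}(\sL_1,\sL_2)=0$ whenever $\Theta_1\neq\Theta_2$.

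Case~(c) follows at once: when $\Theta_1(z)-\Theta_2(z)\notin\Z$ the $(z\otimes 1)$-spectra of $\bV_A(\sL_1)$ and $\bV_A(\sL_2)$ are disjoint, so in a short exact sequence $0\to\bV_A(\sL_2)\to E\to\bV_A(\sL_1)\to 0$ the element $z\otimes 1$ still acts semisimply on $E$; grouping the eigenspaces according to which of the two spectra the eigenvalue belongs to yields $E=E^{(1)}\oplus E^{(2)}$, and the shift rules above force each $E^{(i)}$ to be a $\fg[A]$-submodule, so the sequence splits. For the remaining cases I would use the short exact sequence $0\to R_1\to\bK_A(\sL_1)\to\bV_A(\sL_1)\to 0$, where $R_1$ is the maximal submodule, together with the Frobenius reciprocity $\Ext^{\bullet}_{\fg[A]}(\bK_A(\sL_1),-)\cong\Ext^{\bullet}_{\fg_+[A]}(\sL_1,-)$, valid in all degrees because $\bU(\fg[A])$ is free over $\bU(\fg_+[A])$ by PBW. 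The associated long exact sequence reads
\begin{align*}
\Hom_{\fg_+[A]}(\sL_1,\bV_A(\sL_2)) &\to \Hom_{\fg[A]}(R_1,\bV_A(\sL_2)) \\
&\to \Ext^1_{\fg[A]}(\bV_A(\sL_1),\bV_A(\sL_2)) \to \Ext^1_{\fg_+[A]}(\sL_1,\bV_A(\sL_2)) \to \cdots .
\end{align*}
In case~(a) both outer terms vanish: a $\fg_+[A]$-homomorphism from, or extension by, the pure-weight module $\sL_1$ with a module of weights $\le\Theta_2(z)\le\Theta_1(z)$ splits off $\sL_1$ in the top weight, the residual $\fg_0[A]$-extension in the equal-weight subcase being split by the second fact above. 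Hence $\Ext^1_{\fg[A]}(\bV_A(\sL_1),\bV_A(\sL_2))\cong\Hom_{\fg[A]}(R_1,\bV_A(\sL_2))$; and any nonzero homomorphism $R_1\to\bV_A(\sL_2)$ is surjective, sends singular vectors into the $\fg_1[A]$-invariants of $\bV_A(\sL_2)$ — which form exactly the top weight space $\sL_2$ — hence is zero on all singular vectors of $\fg_{-1}$-degree $\ge 2$ (wrong weight) and is therefore determined by its restriction to the degree-one part of $R_1$, being nonzero there only if $\Theta_1(z)-\Theta_2(z)=1$. Using that $\fg_{-1}[\sk_{\Theta_1}]\cdot\sL_1$ consists of singular vectors — which one checks from the defining property of $\sk_{\Theta_1}$ and the fact that the $\fg'_0$-part of $\sL_1$ factors through $A/\sm$ — together with the description of $R_1$ from Section~\ref{sec:class} and Frobenius reciprocity, one gets $\Hom_{\fg[A]}(R_1,\bV_A(\sL_2))\cong\Hom_{\fg_0[A]}(\fg_{-1}[\sk_{\Theta_1}]\otimes\sL_1,\sL_2)$, which is~(a).

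Case~(b) follows from~(a) either by the contravariant duality $X\mapsto (X^*)^\omega$, where $\omega$ is a grading-reversing automorphism of $\fg$ (it exchanges highest- and lowest-weight Kac modules, swaps $\fg_1$ with $\fg_{-1}$, and interchanges $\Theta_1$ with $\Theta_2$), or symmetrically by realizing $\bV_A(\sL)$ as the irreducible socle of a lowest-weight Kac module and repeating the argument on that side. For case~(d), when $\Theta_1=\Theta_2$, the top eigenspaces sit at the same eigenvalue; restriction to top weight spaces, $[E]\mapsto[E_{[\Theta_1(z)]}]$, defines a map $\Ext^1_{\fg[A]}(\bV_A(\sL_1),\bV_A(\sL_2))\to\Ext^1_{\fg_0[A]}(\sL_1,\sL_2)$, which is injective: if the top of $E$ is split then $\sL_1$ lifts to a $\fg_+[A]$-submodule of $E$, the resulting map $\bK_A(\sL_1)\to E$ is a scalar multiple of the canonical one on the top, and it annihilates $R_1$ (a weight count: $R_1$ carries no weight $\Theta_1(z)$, whereas the top of $\bV_A(\sL_2)$ does), so $E$ splits. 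For the converse, a nonsplit $\fg_0[A]$-extension of $\sL_1$ by $\sL_2$, made into a $\fg_+[A]$-module with $\fg_1[A]$ acting trivially, is lifted through the exact functor $\bK_A$, pushed forward along $\bK_A(\sL_2)\twoheadrightarrow\bV_A(\sL_2)$ and cut down along $\bK_A(\sL_1)\twoheadrightarrow\bV_A(\sL_1)$, the splitting of the intervening ``radical'' extension again being forced by weights; this yields a nonsplit $\fg[A]$-extension. Together these give $\Ext^1_{\fg[A]}(\bV_A(\sL_1),\bV_A(\sL_2))=0\iff\Ext^1_{\fg_0[A]}(\sL_1,\sL_2)=0$.

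The main obstacle is the precise description of the maximal submodule $R_1$ coming from Section~\ref{sec:class}: one must show that $R_1$ is controlled in $\fg_{-1}$-degree one by $\fg_{-1}[\sk_{\Theta_1}]\otimes\sL_1$ — keeping track of the atypical singular vectors — and that every $\fg_0[A]$-homomorphism out of $\fg_{-1}[\sk_{\Theta_1}]\otimes\sL_1$ extends uniquely to a $\fg[A]$-homomorphism $R_1\to\bV_A(\sL_2)$ whenever the target weights match. A secondary technical point is the bookkeeping of the parity signs and of the modular twist by $\Lambda^{\mathrm{top}}\fg_{\pm 1}[A]$ in the duality step for~(b), together with the verification that the intervening ``radical'' extension splits in the converse half of~(d).
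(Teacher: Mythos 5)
Your overall architecture for parts (a)--(c) matches the paper's: the long exact sequence obtained from $0\to W_1\to\bK_A(\sL_1)\to\bV_A(\sL_1)\to0$, Frobenius reciprocity $\Ext^\bullet_{\fg[A]}(\bK_A(\sL_1),-)\cong\Ext^\bullet_{\fg_+[A]}(\sL_1,-)$, $z$-weight arguments to kill the two outer terms, and a twisted duality fixing irreducibles to deduce (b) from (a); your direct semisimplicity argument for (c) is a clean variant of the paper's. However, the step you yourself flag as ``the main obstacle'' --- the identification $\Hom_{\fg[A]}(W_1,\bV_A(\sL_2))\cong\Hom_{\fg_0[A]}(\fg_{-1}[\sk_{\Theta_1}]\otimes\sL_1,\sL_2)$ --- is exactly where the paper does its real work (Lemma~\ref{lem:W=oplus}): one needs the explicit description of $W_1$ as $\Omega$ (or as $\Omega\oplus Z_{\Theta_1\boxtimes\sV_1}^\sm$ in the atypical case $\sk_{\Theta_1}=\sm$, where one must separately show $\Hom_{\fg[A]}(Z_{\Theta_1\boxtimes\sV_1}^\sm,\bV_A(\sL_2))=0$ using $\sk_{\Theta_2}\ne\sm$), and one must verify that every $\fg_0[A]$-map out of $\Omega_0=\fg_{-1}[\sk_{\Theta_1}]\otimes\sL_1$ kills the relations $(y\otimes k)\otimes((y\otimes k)\otimes v)$ in $\bK_A(\Omega_0)$ and hence descends to $\Omega$. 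Without this, (a) and (b) are not proved.

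The second, more serious gap is in the converse half of (d). You need the class in $\Ext^1_{\fg[A]}(\bK_A(\sL_1),\bV_A(\sL_2))$ produced from a nonsplit $\fg_0[A]$-extension to die in $\Ext^1_{\fg[A]}(W_1,\bV_A(\sL_2))$, and you assert this is ``forced by weights''. It is not: when $\Theta_1=\Theta_2$ the $z$-eigenvalues of $W_1$ (contained in $\Theta_1(z)-\Z_{\ge1}$) and of $\bV_A(\sL_2)$ (contained in $\Theta_1(z)-\Z_{\ge0}$) overlap, so no eigenvalue count splits that restricted extension; the point is especially delicate for classes coming from $\Ext^1_{\fz[A]}(\Theta_1,\Theta_2)$, which deform the central character. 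The paper avoids the issue entirely by running the five-term sequence of the Lyndon--Hochschild--Serre spectral sequence for $\fg[\sk_{\Theta_2}]\subseteq\fg[A]$ in the opposite direction: $\Ext^1_{\fg[A]}=0$ forces $\Ext^1_{\fg[A/\sk_{\Theta_2}]}(\bV_A(\sL_1),\bV_A(\sL_2))=0$, and over $A/\kteta$ one has $\bV_A(\sL_1)\cong\bK_{A/\kteta}(\sL_1)$, so Lemma~\ref{lem:hom.ext.kac}\ref{lem:hom.ext.kac.d} applies with no radical in sight; the resulting vanishing of $\Ext^1_{\fg_0[A/\kteta]}(\sL_1,\sL_2)$ is then transferred to $\Ext^1_{\fg_0[A]}(\sL_1,\sL_2)$ via the K\"unneth formula and the explicit formulas of Neher--Savage, which exhibit both groups as assembled from the same spaces $\Hom_{\fg_0'}(\fg_0'\otimes\sV_1,\sV_2)$ and $\Hom_{\fg_0'}(\sV_1,\sV_2)$. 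This comparison step is absent from your proposal and is not a formality.
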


In Section~\ref{sec:apps}, we present two applications of our results, both regarding general classical Lie superalgebra (not just those of type~I).  First, using Theorem~\ref{thm:ext.kac} and \cite[Theorem~5.7]{CM19}, we describe the block decomposition of the categories of finite-dimensional $\fg[A]$ and $\hat\fg$-modules in terms of spectral characters (see Propositions~\ref{prop:blk.map} and \ref{prop:blk.aff}).  Then, we use Proposition~\ref{prop:main} to establish a relation between the highest weights of a given finite-dimensional irreducible $\fg[A]$-module with respect to non-conjugate Borel subalgebras of $\fg[A]$ (see Proposition~\ref{prop:chang.Borel}).  Recall that, unlike the Lie algebra case, for a finite-dimensional simple Lie superalgebra, not all Borel subalgebras are conjugated via the Weyl group of $\fg_{\bar0}$.  In fact, to obtain Borel subalgebras in distinct orbits with respect to the action of the Weyl group, one needs to use the so-called odd reflections (see \cite{Ser11} for details).  This is an important distinction between the super and non-super cases and has deep consequences in the study of highest-weight modules for Lie superalgebras.

We point out that our results are new for all $A \ne \C$, including the current and loop cases

%%%%%%%%%%%
\subsection*{Notation} Throughout this paper, the ground field will be $\C$.  All vector spaces and tensor products will be considered to be over $\C$, unless otherwise stated. For any Lie superalgebra $\fa$, let $\bU(\fa)$ denote its universal enveloping superalgebra, and for any vector space $V$ let $\bLambda (V)$ denote its Grassmann algebra.

%%%%%%%%%%%%%%%%%%%%%%%%%%%
\section{Preliminaries}  %%
%%%%%%%%%%%%%%%%%%%%%%%%%%%
\label{sec:prelim}

Let $\fg \cong \fsl(m|n)$ or $\fosp(2|2n)$, $1 \le m \le n$, $1 < n$, that is, that $\fg$ is either a basic classical Lie superalgebra of type~I or a finite-dimensional Lie superalgebra isomorphic to $\fsl(n|n)$.  In this section, we fix some notation and recall a few results that will be used throughout the paper.

Fix a Cartan subalgebra $\fh \subseteq \fg$, which is, by definition, a Cartan subalgebra of the reductive Lie algebra $\fg_0$.  Under the adjoint action of $\fh$, we have a root space decomposition:
    \[
    \fg = \fh \oplus \bigoplus_{\alpha \in \fh^* \setminus \{0\}} \fg_{\alpha},
    \quad \textup{where} \quad
    \fg_\alpha := \{ x \in \fg \mid [h, x] = \alpha(h) x \textup{ for all } h \in \fh \}.
    \]
Denote by $R$ the set of roots, $\{ \alpha \in \fh^* \setminus \{0\} \mid \fg_\alpha \neq \{ 0 \} \}$.  For each $\alpha \in R$, the root space $\fg_\alpha$ is either purely even, that is, $\fg_\alpha \subseteq \fg_0$, or $\fg_\alpha$ is purely odd, that is, $\fg_\alpha \subseteq \fg_{\bar1}$.  Let $R_0 = \{ \alpha \in R \mid \fg_\alpha \subseteq \fg_0 \}$ be the set of even roots and $R_{\bar1} = \{ \alpha \in R \mid \fg_\alpha \subseteq \fg_{\bar1} \}$ be the set of odd roots.  Every choice of a set of simple roots $\Delta \subseteq R$ yields a decomposition $R = R^+ \sqcup R^-$, where $R^+ := \Z_{\ge0}\Delta \cap R$ (resp. $R^- := \Z_{\le0}\Delta \cap R$) denotes the set of positive (resp. negative) roots.  Define
    \[
    \Delta_0 = \Delta \cap R_0, \quad
    \Delta_{\bar1} = \Delta \cap R_{\bar1}, \quad
    R^\pm_0 = R_0 \cap R^\pm
    \quad \textup{and} \quad
    R^\pm_{\bar1} = R_{\bar1} \cap R^\pm.
    \]
A choice of simple roots $\Delta \subseteq R$ also induces a triangular decomposition $\fg = \fn^- \oplus \fh \oplus \fn^+$, where $\fn^\pm := \bigoplus_{\alpha \in R^\pm} \fg_\alpha$; and for every choice of triangular decomposition $\fg_0 = \fn_0^- \oplus \fh \oplus \fn_0^+$, one can choose $\Delta$ in such a way that $\fn^\pm = \fn^\pm_0 \oplus \fg_{\pm1}$.

Choose elements $z$ and, for each $\alpha \in R^+$, $y_\alpha, h_\alpha, x_\alpha$, in such a way that: $\{ z \}$ is a basis of $\fz$; for each $\alpha \in R^+$, $\{ y_\alpha \}$ is a basis of $\fg_{-\alpha}$, $\{ x_\alpha \}$ is a basis of $\fg_\alpha$, $[x_\alpha, y_\alpha] = h_\alpha \in \fh$; and the set $\{y_\alpha,\, h_i,\, x_\alpha \mid \alpha \in R^+_0,\, i \in \Delta_0 \}$ is a Chevalley basis of $\fg'_0$.  In particular:
    \begin{equation} \label{eq:z.hbeta.ne0}
    \textup{for every } \beta \in R^+_{\bar1}, \ \ 
    z = c_\beta h_\beta + \sum_{i \in \Delta_0} c_i h_i \ \ \ 
    \textup{for some } \{c_i \mid i \in \Delta_0 \} \subseteq \C \textup{ and } c_\beta \in \C \setminus \{0\}.
    \end{equation}
Using this basis, define a linear map $\tau : \fg[A] \to \fg[A]$ by linearly extending
    \[
    \tau(x_\alpha\otimes a) = y_\alpha\otimes a, \
    \tau(y_\alpha\otimes a) = x_\alpha\otimes a, \
    \tau(h\otimes a) = h\otimes a, \quad
    \textup{ for all } \ \alpha \in R^+,\, h \in \fh,\, a\in A.
    \]
One can check that $\tau$ defines an anti-involution of $\fg[A]$, \details{ that is, $\tau : \fg[A] \to \fg[A]$ is a linear map, such that $\tau[x,y] = [\tau(y), \tau(x)]$, for all $x,y\in \fg[A]$, and $\tau^2 = {\rm id}_{\fg[A]}$,} such that $\left. \tau \right|_{\fh[A]} = {\rm id}_{\fh[A]}$.  Given a finite-dimensional $\fg[A]$-module $M$, denote by $M^\vee$ the $\fg[A]$-module whose underlying vector space is $M^*$ and the action of $\fg[A]$ is given by:
    \begin{equation} \label{eq:vee.action}
    (x f)(m) := f(\tau(x)m) \qquad
    \textup{for all } x \in \fg[A],\ f \in M^*,\ m \in M.
    \end{equation}
\details{
Since
    \begin{align*}
    (x y f)(m) -(-1)^{|x||y|} (y x f)(m)
    &= (yf)(\tau(x)m) -(-1)^{|x||y|} (xf)(\tau(y)m) \\
    &= f(\tau(y)\tau(x)m) -(-1)^{|x||y|} f(\tau(x)\tau(y)m) \\
    &= f((\tau(y)\tau(x) -(-1)^{|y||x|} \tau(x)\tau(y))m) \\
    &= f([\tau(y), \tau(x)] m) \\
    &= f(\tau[x,y]m) \\
    &= ([x,y]f) (m),
    \end{align*}
for all homogeneous $x, y \in \fg[A]$, $f \in M^*$, $m \in M$, the action defined in \eqref{eq:vee.action} in fact endows $M^\vee$ with a $\fg[A]$-module structure.
}

The following result will be used in the proof of Theorem~\ref{thm:ext.kac}.

\begin{lem} \label{lem:dual.ext}
In the category of finite-dimensional $\fg[A]$-modules, $M \mapsto M^\vee$ defines an exact contravariant functor such that $L^\vee \cong L$ for all finite-dimensional irreducible $\fg[A]$-module $L$.
\end{lem}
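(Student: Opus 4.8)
The plan is to verify the two assertions of the lemma in turn, both of which are essentially formal consequences of the construction in \eqref{eq:vee.action} and the fact that $\tau$ is an anti-involution fixing $\fh[A]$ pointwise.

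For exactness and contravariance of $M \mapsto M^\vee$: the underlying functor is just linear duality $M \mapsto M^*$ on the category of finite-dimensional vector spaces, which is well known to be exact and contravariant; one only needs to check that the $\fg[A]$-module structures are compatible with this, i.e.\ that a morphism $\phi \colon M \to N$ of $\fg[A]$-modules dualizes to a morphism $\phi^* \colon N^\vee \to M^\vee$. This is immediate: for $x \in \fg[A]$, $f \in N^*$, $m \in M$, one computes $(x \cdot \phi^*(f))(m) = \phi^*(f)(\tau(x)m) = f(\phi(\tau(x)m)) = f(\tau(x)\phi(m)) = (x\cdot f)(\phi(m)) = \phi^*(x \cdot f)(m)$, so $\phi^*$ intertwines the actions. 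Since a short exact sequence of finite-dimensional modules is in particular a short exact sequence of vector spaces, applying $(-)^*$ keeps it exact, and the module structures come along for free; hence $(-)^\vee$ is exact.

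For the isomorphism $L^\vee \cong L$ when $L$ is finite-dimensional irreducible: I would argue that $L^\vee$ is again finite-dimensional irreducible (a submodule of $L^\vee$ is the annihilator of a quotient module of $L$, by the exact contravariant functor property, and $L$ has no proper nonzero quotients), so it suffices to show $L$ and $L^\vee$ have the same isomorphism invariant. The natural candidate is the set of $\fh[A]$-weights appearing with multiplicity: since $\tau$ restricts to the identity on $\fh[A]$, for $h \in \fh[A]$ and $f$ in the weight-$\lambda$ space of $L^\vee$ relative to $L$'s weight decomposition, $h$ acts on $L^*$ by the transpose of its action on $L$, so the weights of $L^\vee$ are exactly the weights of $L$ with the same multiplicities (this uses that $\fh[A]$ acts on the finite-dimensional module $L$ by commuting operators, so $L$ decomposes into generalized weight spaces, and duality swaps $L_\lambda$ with $(L^\vee)_\lambda$). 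By the classification recalled in the introduction, a finite-dimensional irreducible $\fg[A]$-module is $\bV_A(\sL)$ for a finite-dimensional irreducible $\fg_0[A]$-module $\sL$, and such a module is determined up to isomorphism by, say, its highest weight data relative to a fixed Borel; since the weight data (in particular the highest weight, once a Borel is fixed — note $\tau$ sends $\fn^+[A]$ to $\fn^-[A]$) of $L$ and $L^\vee$ agree, $L^\vee \cong L$.

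The main obstacle is the last step: pinning down precisely which invariant of the finite-dimensional irreducible modules is preserved by $(-)^\vee$ and invoking the classification correctly. One must be careful that $\tau$ exchanges positive and negative root spaces, so it is not literally true that $L^\vee$ has the same highest weight as $L$ with respect to the same Borel in the naive sense; rather, the contragredient-type twist built into \eqref{eq:vee.action} is exactly designed so that the highest weight is preserved. I would make this rigorous either by citing the analogous statement for finite-dimensional simple Lie superalgebras (where $\tau$ is the standard Chevalley anti-involution and the dual of an irreducible is isomorphic to itself) and lifting it through the $\bK_B(\sL) \twoheadrightarrow \bV_B(\sL)$ construction, or by directly checking that the unique (up to scalar) $\fn^+[A]$-highest weight vector of $L^\vee$ has the same $\fh[A]$-weight as that of $L$, using that $\tau(\fn^+[A]) = \fn^-[A]$ and $\tau|_{\fh[A]} = \id$; the exactness already established then forces $L^\vee$ to be irreducible, and matching highest weights gives the isomorphism.
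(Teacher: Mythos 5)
Your proposal is correct and takes essentially the same route as the paper: exactness and contravariance come for free from vector-space duality, and the isomorphism $L^\vee \cong L$ is obtained by checking that the functional dual to a $\fb[A]$-highest-weight vector of $L$ is again an $\fn^+[A]$-singular vector of the same $\fh[A]$-weight (using $\tau(\fn^+[A]) = \fn^-[A]$ and $\tau|_{\fh[A]} = \id$), which is exactly how the paper invokes the highest-weight classification of \cite[Lemma~4.5]{Savage14}.
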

\begin{proof}
At the level of vector spaces, $^\vee$ is the same as the usual dual $^*$.  This implies that $M \mapsto M^\vee$ defines an exact contravariant functor in the category of finite-dimensional $\fg[A]$-modules.  The fact that $L^\vee \cong L$ for all finite-dimensional irreducible $\fg[A]$-module $L$ follows from \cite[Lemma~4.5]{Savage14}.
\iftoggle{details}{

}{\qedhere}
\details{
Let $L$ is a finite-dimensional irreducible $\fg[A]$-module.  If $N$ is a $\fg[A]$-submodule of $L^\vee$, then we have a short exact sequence of finite-dimensional $\fg[A]$-modules: $0 \to N \to L^\vee \to L^\vee/N \to 0$.  Since $^\vee$ is a contravariant exact functor, then $0 \to (L^\vee / N)^\vee \to (L^\vee)^\vee \to N^\vee \to 0$ is also an exact sequence of $\fg[A]$-modules.  Now, since $\tau^2 = {\rm id}_{\fg[A]}$ and $L$ is finite dimensional, then $(L^\vee)^\vee \cong L$.  Moreover, since $L$ is irreducible, then: either $(L^\vee / N)^\vee = 0$, or $(L^\vee / N)^\vee \cong L$.  Notice that, for every finite-dimensional $\fg[A]$-module, we have that $\dim M^\vee = \dim M$.  Hence, in the first case, we have that $\dim N = \dim L^\vee$, which implies that $N = L^\vee$; and in the second case, we have that $\dim N = 0$.  Either way, $N$ is not a proper submodule of $L^\vee$.

In order to finish the proof, we need to show that $L^\vee \cong L$ for all finite-dimensional irreducible $\fg[A]$-module $L$.  By \cite[Lemma~4.5]{Savage14}, there exists $\psi \in \fh[A]^*$ and $v_\psi \in L$ such that: $\fn^+[A]v_\psi=0$, $(h\otimes a)v_\psi = \psi(h\otimes a)v_\psi$ for every $h \in \fh$, $a \in A$, and $L = \bU(\fn^-[A])v_\psi$.  Moreover, $L$ can be uniquely determined by such $\psi$.  Thus, it is enough to show that there exists $f \in L^\vee$ such that: $\fn^+[A] f = 0$, $(h \otimes a) f = \psi(h\otimes a) f$ for every $h \in \fh$, $a \in A$.

Since $L$ is finite dimensional, there exist $w_2, \dotsc, w_n \in L$ such that $\{v_\psi, w_2, \dotsc, w_n\}$ is a basis for $L$.  Consider the unique linear functional $f \in L^\vee$ that satisfies: $f(v_\psi) = 1$ and $f(w_i) = 0$ for all $i \in \{2, \dotsc, n\}$.  Thus, for all $\alpha \in R^+$, $h \in \fh$, $a \in A$, $i \in \{2, \dotsc, n\}$, we have:
    \begin{gather*}
    ((x_\alpha\otimes a)f)(v_\psi) = f((y_\alpha\otimes a)v_\psi) = 0, \qquad
    ((x_\alpha\otimes a)f))(w_i) = f((y_\alpha\otimes a)w_i) = 0, \\
    ((h\otimes a)f)(v_\psi) = f((h\otimes a)v_\psi) = \psi(h\otimes a), \qquad
    ((h\otimes a)f)(w_i) = f((h\otimes a)w_i) = 0.
    \end{gather*}
This shows that $\fn^+[A]f = 0$ and $(h\otimes a)f = \psi(h\otimes a)f$ for every $h \in \fh$, $a \in A$.
}
\end{proof}

We finish this section by establishing a few commutation relations that will be used in the proof of Proposition~\ref{prop:main}.  Their proofs are straight-forward.

\begin{lem} \label{lem:comm.rels}
Let $k \in \Z_{>0}$ and $a_0, a_1, \dotsc, a_k \in A$.
\begin{enumerate}
\item For all $\alpha \in R^+_{\bar 1}$, we have:
\begin{align*}
(x_\alpha \otimes a_0){}&{}(y_\alpha \otimes a_1) \dotsm (y_\alpha \otimes a_k) \\
={}&{}(-1)^k (y_\alpha \otimes a_1) \dotsm (y_\alpha \otimes a_k) (x_\alpha \otimes a_0)\\
&{}+ \sum_{j=1}^k (-1)^{j-1} (y_\alpha \otimes a_1) \dotsm (y_\alpha \otimes a_{j-1}) (y_\alpha \otimes a_{j+1}) \dotsm (y_\alpha \otimes a_k) (h_\alpha \otimes a_0a_j).
\end{align*}

\item For all $\alpha, \beta \in R^+_{\bar1}$, we have:
\begin{align*}
(x_\alpha \otimes a_0){}&{}(y_\beta \otimes a_1) \dotsm (y_\beta \otimes a_k) \\
={}&{}(-1)^k (y_\beta \otimes a_1) \dotsm (y_\beta \otimes a_k) (x_\alpha \otimes a_0)\\
&{}+ \sum_{j=1}^k (-1)^{j-1} (y_\beta \otimes a_1) \dotsm (y_\beta \otimes a_{j-1})  ([x_\alpha, y_\beta] \otimes a_0a_j) (y_\beta \otimes a_{j+1}) \dotsm (y_\beta \otimes a_k),
\end{align*}
where $[x_\alpha, y_\beta] \in \fg_0$.

\item For all $\beta \in R^+_{\bar1}$, $\gamma \in R^+_0$, we have:
\begin{align*}
(x_\gamma \otimes a_0){}&{}(y_\beta \otimes a_1) \dotsm (y_\beta \otimes a_k) \\
={}&{} (y_\beta \otimes a_1) \dotsm (y_\beta \otimes a_k) (x_\gamma \otimes a_0)\\
&{}+ \sum_{j=1}^k (-1)^{k-j} (y_\beta \otimes a_1) \dotsm (y_\beta \otimes a_{j-1}) (y_\beta \otimes a_{j+1}) \dotsm (y_\beta \otimes a_k) ([x_\gamma, y_\beta] \otimes a_0a_j),
\end{align*}
where $[x_\gamma, y_\beta] \in \fg_{-1}$.
\end{enumerate}
\end{lem}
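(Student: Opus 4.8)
The plan is to prove all three identities by induction on $k$, treating them in a uniform way. The base case $k=1$ is just a single commutation relation in $\fg[A]$: for part~(a) it reads $(x_\alpha \otimes a_0)(y_\alpha \otimes a_1) = -(y_\alpha \otimes a_1)(x_\alpha \otimes a_0) + ([x_\alpha,y_\alpha]\otimes a_0a_1)$, which holds since $\alpha \in R^+_{\bar1}$ makes both $x_\alpha$ and $y_\alpha$ odd, so that the bracket in $\bU(\fg[A])$ is the anticommutator and equals $h_\alpha \otimes a_0a_1$; parts~(b) and~(c) are identical except that $[x_\alpha, y_\beta]$ (resp. $[x_\gamma, y_\beta]$) replaces $h_\alpha \otimes a_0a_j$, and one records that this bracket lands in $\fg_0$ (resp. $\fg_{-1}$) because $\alpha - \beta$ is an even root or zero (resp. $\gamma - \beta \in R^-_{\bar1}$ so the root vector is in $\fn^-$, hence in $\fg_{-1}$ for the standard $\Z$-grading). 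The sign and parity bookkeeping is the only subtlety here: in part~(c), $x_\gamma$ is even, so moving it past a single odd factor $y_\beta \otimes a_1$ introduces no sign, consistent with the coefficient $(-1)^{k-j}$ specializing to $(-1)^0 = 1$ at $k=1$, $j=1$; in parts~(a) and~(b), $x_\alpha$ is odd, so each transposition past an odd $y_\alpha$ (resp. $y_\beta$) contributes a factor $-1$, matching the leading sign $(-1)^k$ and the alternating signs $(-1)^{j-1}$ in the sum.

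For the inductive step, I would write $(y_\bullet \otimes a_1)\dotsm(y_\bullet \otimes a_k) = (y_\bullet \otimes a_1)\cdot\big[(y_\bullet \otimes a_2)\dotsm(y_\bullet \otimes a_k)\big]$, apply the $k=1$ relation to move $x_\bullet \otimes a_0$ past the first factor $y_\bullet \otimes a_1$, and then apply the inductive hypothesis to the remaining length-$(k-1)$ product. Reindexing the resulting sums and collecting the term where the bracket $[x_\bullet, y_\bullet]$ is paired with $a_0 a_1$ (which comes from the base-case step) together with the terms where it is paired with $a_0 a_j$, $j \ge 2$ (which come from the inductive hypothesis) reproduces the full sum $\sum_{j=1}^k$ in the claimed identity. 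One must check that the extra sign $(-1)$ picked up each time $x_\alpha$ (odd) crosses $y_\alpha \otimes a_1$ in parts~(a),(b) shifts the exponents $(-1)^{j-1} \to (-1)^j$ correctly, and that in part~(c) (where $x_\gamma$ is even) the exponent $(-1)^{k-j}$ behaves correctly under the length shift $k \mapsto k-1$; in part~(b) one additionally uses that $[x_\alpha, y_\beta] \in \fg_0$ is even, so that once it has been created it commutes with the odd factors $y_\beta \otimes a_i$ up to the sign already recorded, while in part~(c) $[x_\gamma, y_\beta] \in \fg_{-1}$ is odd, which is exactly why it is written to the right of all the surviving $y_\beta$ factors and why the sign $(-1)^{k-j}$ (the number of odd $y_\beta$'s it must pass) appears.

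Since the authors describe the proofs as "straight-forward", I expect no genuine obstacle beyond careful sign tracking; the one place to be attentive is the parity of the freshly produced bracket element and the number of odd factors it must commute past, as this is precisely what distinguishes the three cases and fixes the placement of $([x_\bullet, y_\bullet] \otimes a_0 a_j)$ inside each summand. An alternative, perhaps cleaner, route is to observe that in $\bU(\fg[A])$ the element $\ad(x_\bullet \otimes a_0)$ acts as a (super)derivation, so that $(x_\bullet \otimes a_0)$ applied to a product equals $\sum_j \pm (\text{product with } j\text{-th factor replaced by its bracket with } x_\bullet \otimes a_0) + (-1)^{?}(\text{product})(x_\bullet \otimes a_0)$, with the signs dictated by the Leibniz rule for a parity-$|x_\bullet|$ derivation acting on a product of parity-$\bar1$ elements; expanding this Leibniz formula directly yields all three identities at once, with $|x_\bullet| = \bar1$ giving parts~(a),(b) and $|x_\bullet| = \bar0$ giving part~(c).
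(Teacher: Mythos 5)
Your approach---successive application of the single commutation relation in $\bU(\fg[A])$ (equivalently, induction on $k$, or the super-Leibniz rule for $\ad(x_\bullet\otimes a_0)$)---is exactly the paper's, and the sign bookkeeping you describe is right. Two small but essential facts are missing from your writeup, however, and they are precisely the content of the lemma beyond bookkeeping. First, in part~(a) your inductive step produces the $j=1$ term as $(h_\alpha\otimes a_0a_1)(y_\alpha\otimes a_2)\dotsm(y_\alpha\otimes a_k)$, with the Cartan factor on the \emph{left}, whereas the stated identity has it on the far right; to move it there with no correction terms you need $[h_\alpha,y_\alpha]=-\alpha(h_\alpha)y_\alpha=0$, i.e.\ that $\alpha$ is isotropic, which holds because every odd root of $\fsl(m|n)$ or $\fosp(2|2n)$ is isotropic (the paper's proof records $h_\alpha y_\alpha=y_\alpha h_\alpha$ explicitly). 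Second, in part~(c) you count the sign $(-1)^{k-j}$ as ``the number of odd $y_\beta$'s it must pass,'' but passing one odd element past another in $\bU(\fg[A])$ produces a bracket term in addition to the sign; the clean anticommutation $([x_\gamma,y_\beta]\otimes a_0a_j)(y_\beta\otimes a_i)=-(y_\beta\otimes a_i)([x_\gamma,y_\beta]\otimes a_0a_j)$ holds only because $[[x_\gamma,y_\beta],y_\beta]\in[\fg_{-1},\fg_{-1}]=0$, again as noted in the paper. With these two observations supplied, your induction (or the Leibniz-rule formulation) closes correctly.
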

\iftoggle{details}{{\color{OliveGreen}{
\begin{proof}
\begin{enumerate}
\item Since $\alpha$ is an odd root, then $x_\alpha y_\alpha = - y_\alpha x_\alpha + h_\alpha$ and $h_\alpha y_\alpha = y_\alpha h_\alpha$.  The result follows from successively applying these two relations.

\item Since $\alpha, \beta$ are odd roots, then $x_\alpha y_\beta = - y_\beta x_\alpha + [x_\alpha, y_\beta]$, where $[x_\alpha, y_\beta] \in [\fg_1, \fg_{-1}] \subseteq \fg_0$.  The result follows from successively applying this relation.

\item Since $\beta$ is an odd root and $\gamma$ is an even root, then $x_\gamma y_\beta = y_\beta x_\gamma + [x_\gamma, y_\beta]$, where $[x_\gamma, y_\beta] \in [\fg_0, \fg_{-1}] \subseteq \fg_{-1}$.  Hence $[[x_\gamma, y_\beta], y_\beta] \in [\fg_{-1}, \fg_{-1}] = 0$.  This implies that $[x_\gamma, y_\beta]y_\beta = - y_\beta [x_\gamma, y_\beta]$.  The result follows from successively applying these relations.
\qedhere
\end{enumerate}
\end{proof}}}}{}

%%%%%%%%%%%%%%%%%%%%%%%%%%%%%%%%%%%%%%%%%%%%%%%%%%%%
\section{Finite-dimensional irreducible modules}  %%
%%%%%%%%%%%%%%%%%%%%%%%%%%%%%%%%%%%%%%%%%%%%%%%%%%%%
\label{sec:class}

Assume that $\fg \cong \fsl(m|n)$ or $\fosp(2|2n)$, $1 \le m \le n$, $1 < n$, that is, that $\fg$ is a basic classical Lie superalgebra of type~I or a finite-dimensional Lie superalgebra isomorphic to $\fsl(n|n)$, and also that $A$ is an associative, commutative, finitely-generated, unital algebra over $\C$.

Let $\sm$ be a maximal ideal of $A$, $n > 1$, and $L$ be a finite-dimensional irreducible $\fg[A/\sm^n]$-module.  Then $L^{\sm^n}$ is an irreducible $\fg[A]$-module, on which $\fg \otimes \sm^n$ acts trivially.  Thus, $L^{\sm^n}$ is isomorphic to $\bV_A(\sM)$ for some finite-dimensional irreducible $\fg_0[A]$-module $\sM$ (see Introduction). Moreover, it follows from the proof of \cite[Proposition~6.4]{Savage14} that $L^{\sm^n}$ is isomorphic to $\bV_{A/I}(\sM)$ for any ideal $I \subseteq A$ such that $(\fg \otimes I)L^{\sm^n} = 0$.  We want to prove that, in fact, when such $I$ is largest, then $L^{\sm^n} \cong \bV_{A/I} (\sM) \cong \bK_{A/I} (\sM)$.  Since $\bV_{A/I} (\sM)$ is a quotient of $\bK_{A/I} (\sM)$, we begin by studying the submodules of $\bK_{A/I} (\sM)$.

\begin{lem} \label{lem:silly}
Let $I$ be a finite-codimensional ideal of $A$, $\sM$ be a finite-dimensional irreducible $\fg_0[A/I]$-module, and $d = \dim \fg_{-1}[A/I]$.  If $W \ne \{0\}$ is a $\fg[A/I]$-submodule of $\bK_{A/I} (\sM)$, then there exist a nonzero $p \in \bLambda^d (\fg_{-1}[A/I])$ and a highest-weight vector $v^+ \in \sM$, such that $p \otimes v^+ \in W$ is nonzero.
\end{lem}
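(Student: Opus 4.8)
The structure of $\bK_{A/I}(\sM)$ is, as a vector space, $\bLambda(\fg_{-1}[A/I]) \otimes \sM$ via the PBW theorem (using $\fg[A/I] = \fg_{-1}[A/I] \oplus \fg_+[A/I]$ and the fact that $\sM$ is a $\fg_+[A/I]$-module with $\fg_1[A/I]$ acting by zero). The plan is to exploit the natural $\Z$-grading on $\bK_{A/I}(\sM)$ coming from the $\Z$-grading $\fg = \fg_{-1} \oplus \fg_0 \oplus \fg_1$: set $\deg(\bLambda^j(\fg_{-1}[A/I]) \otimes \sM) = -j$, so the top graded piece (lowest $\fg_{-1}$-degree) is $\bLambda^d(\fg_{-1}[A/I]) \otimes \sM$, which is $1$-dimensional over $\sM$, i.e. isomorphic to $\sM$ as a $\fg_0[A/I]$-module (twisted by the $\fg_0$-action on $\bLambda^d(\fg_{-1}[A/I]) \cong \C$). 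Since $\fg_1[A/I]$ raises $\Z$-degree and $\fg_{-1}[A/I]$ lowers it, and $\fg_0[A/I]$ preserves it, any $\fg[A/I]$-submodule $W$ is automatically $\Z$-graded.

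First I would argue that $W$ has nonzero intersection with the bottom graded piece $\bLambda^d(\fg_{-1}[A/I]) \otimes \sM$. Take any nonzero homogeneous $w \in W$ of minimal $\Z$-degree $-j$ (so $w \in \bLambda^j(\fg_{-1}[A/I]) \otimes \sM$); if $j = d$ we are done with this sub-step, so suppose $j < d$. The idea is to hit $w$ with a suitable product of elements of $\fg_{-1}[A/I]$ to push it down to degree $-d$ without killing it. Concretely, writing $w = \sum_r q_r \otimes v_r$ with $q_r \in \bLambda^j(\fg_{-1}[A/I])$ linearly independent and $v_r \in \sM$ nonzero, I would choose $\xi \in \bLambda^{d-j}(\fg_{-1}[A/I])$ (equivalently, a product $(u_1 \otimes a_1)\cdots(u_{d-j}\otimes a_{d-j})$ of odd elements of $\fg_{-1}[A/I]$) so that $\xi \cdot q_{r_0} \ne 0$ in $\bLambda^d(\fg_{-1}[A/I])$ for some index $r_0$ while keeping track of signs; since $\bLambda^d(\fg_{-1}[A/I])$ is $1$-dimensional and nonzero, and exterior multiplication $\bLambda^j \times \bLambda^{d-j} \to \bLambda^d$ is a perfect pairing, such $\xi$ exists for any nonzero $q_{r_0}$. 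Acting by $\xi$ on $w$ lands in $\bLambda^d(\fg_{-1}[A/I]) \otimes \sM \cap W$; the only subtlety is that $\xi \cdot w = \sum_r (\xi \cdot q_r) \otimes v_r = \big(\sum_r \pm c_r\big)(e \otimes \,\cdot\,)$ where $e$ spans $\bLambda^d$ — I must ensure the coefficients do not all cancel. This is arranged by first replacing $w$ with $\eta \cdot w$ for a generic $\eta \in \bLambda^{d-j}(\fg_{-1}[A/I])$: the resulting element of $W \cap (\bLambda^d(\fg_{-1}[A/I])\otimes\sM)$ is $\big(\sum_r \langle \eta, q_r\rangle v_r\big) \otimes e$ up to sign, and since the $q_r$ are independent, a generic $\eta$ makes at least one such pairing nonzero, hence the $\sM$-component is a nonzero vector $v \in \sM$.

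Now $W \cap (\bLambda^d(\fg_{-1}[A/I]) \otimes \sM)$ is a nonzero $\fg_0[A/I]$-submodule of $\bLambda^d(\fg_{-1}[A/I]) \otimes \sM \cong \sM$ (as $\fg_0[A/I]$-modules, since the one-dimensional $\bLambda^d(\fg_{-1}[A/I])$ contributes only a character twist that is absorbed into the irreducible $\sM$ — more carefully, $\sM$ irreducible implies $\bLambda^d(\fg_{-1}[A/I])\otimes\sM$ is irreducible). By irreducibility of $\sM$, this submodule is everything: $\bLambda^d(\fg_{-1}[A/I]) \otimes \sM \subseteq W$. In particular, for $p$ a nonzero generator of $\bLambda^d(\fg_{-1}[A/I])$ and $v^+ \in \sM$ a highest-weight vector (which exists by the theory recalled just before the lemma — a finite-dimensional irreducible $\fg_0[A/I]$-module has a highest-weight vector for any chosen triangular decomposition $\fg_0 = \fn_0^- \oplus \fh \oplus \fn_0^+$), the element $p \otimes v^+ \in W$ is nonzero, as desired.

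**Main obstacle.** The genuinely delicate point is the descent argument in the second paragraph: controlling the signs and non-cancellation when acting by a product of odd elements of $\fg_{-1}[A/I]$ on a general homogeneous element, i.e. verifying that $\bLambda^{\bullet}(\fg_{-1}[A/I])$ really behaves like the module's top exterior structure under the $\fg_{-1}[A/I]$-action (the PBW identification must be compatible, noting that $\fg_{-1}$ is odd so its enveloping algebra contribution is exactly the exterior algebra, and that $[\fg_{-1},\fg_{-1}] = 0$ so these actions genuinely anticommute with no correction terms). Once that bookkeeping is pinned down via a genericity/perfect-pairing argument, the rest is the standard "submodule meets the socle-type bottom layer, then use irreducibility of $\sM$" reasoning.
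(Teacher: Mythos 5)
Your proposal is correct and follows essentially the same route as the paper: multiply a nonzero element of $W$ by a suitable $\xi\in\bLambda^{d-j}(\fg_{-1}[A/I])$ to land in the one-dimensional top exterior degree (your perfect-pairing/genericity argument actually makes precise a non-cancellation step the paper leaves implicit), then invoke irreducibility of $\sM$ to reach $p\otimes v^+$. One caveat: the assertion that a submodule is ``automatically $\Z$-graded'' does not follow merely from the action shifting degrees; it is true here because the grading is the eigenspace decomposition of the central element $z\otimes 1\in\fg_0[A/I]$ (which acts by a scalar on $\sM$ by Schur's lemma and by $\mu(z)-j$ on $\bLambda^j(\fg_{-1}[A/I])\otimes\sM$), but in fact gradedness is unnecessary, since for an arbitrary $w=\sum_r p_r\otimes v_r$ with minimal exterior degree $d_1$, multiplication by $\xi\in\bLambda^{d-d_1}(\fg_{-1}[A/I])$ already annihilates every component of degree greater than $d_1$.
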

\begin{proof}
Since $W \ne \{0\}$, there exists a nonzero vector $w \in W$.  Now, recall that, as a vector space, $\bK_{A/I}(\sM) \cong \bU(\fg_{-1}[A/I]) \otimes \sM$.  Thus, there exist $p_1, \dotsc, p_k \in \bU(\fg_{-1}[A/I])$ and $v_1, \dotsc, v_k \in V$ such that $w = \sum_{i = 1}^k p_i \otimes v_i$.  Further, recall that $\bU(\fg_{-1}[A/I]) \cong \bLambda  (\fg_{-1}[A/I])$, and consider the usual grading, $\bLambda  (\fg_{-1}[A/I]) = \bigoplus_{i=0}^{d} \bLambda^i (\fg_{-1}[A/I])$.  Without loss of generality, we can assume that $p_1 \in \bLambda^{d_1} (\fg_{-1}[A/I]), \dotsc, p_k \in \bLambda^{d_k} (\fg_{-1}[A/I])$, and $d_1 \le \dotsb \le d_k$.

Now, recall that $\bLambda^i (\fg_{-1}[A/I]) = 0$ for all $i > d$ and $\bLambda^d (\fg_{-1}[A/I])$ is 1-dimensional.  Hence there exists $p \ne 0$ such that $\bLambda^d (\fg_{-1}[A/I]) = \C p$.  Further, since $W$ is a $\fg[A/I]$-submodule of $\bK_{A/I}(\sM)$ and $d_1 \le \dotsb \le d_k$, then there exists $q \in \bLambda^{d - d_1} (\fg_{-1}[A/I])$ such that
    \[
    0\neq qw = p \otimes v' \in W \cap \left(\bLambda^d(\fg_{-1}[A/I])\otimes \sM\right).
    \]
Furthermore, since $\fg_{-1}$ is a $\fg_0$-module (via the adjoint representation), then for each $i \ge 0$, $\bLambda^i (\fg_{-1}[A/I])$ is a $\fg_0[A/I]$-submodule of $\bLambda(\fg_{-1}[A/I])$.  Moreover, since $\bLambda^d (\fg_{-1}[A/I])$ is 1-dimen\-sion\-al, then it is a trivial $\fg_0[A/I]$-module.  This implies that, for every $x \in \fg_0[A/I]$, $p \in \bLambda^d (\fg_{-1}[A/I])$ and $v \in \sM$, we have:
    \[
    x(p\otimes v) = p \otimes xv \in \bK_{A/I}(\sM).
    \]
Since $\sM$ is an irreducible $\fg_0[A/I]$-module, then there exists $u \in \bU(\fg_0[A/I])$ such that $uv'$ is a highest-weight vector of $\sM$.  Hence
    \[
    u(qw) = u (p \otimes v') = p \otimes uv' \in W \quad \textup{is nonzero}.
    \qedhere
    \]
\end{proof}

Let now $\Theta \in \fz[A]^*$, $\sm$ be a maximal ideal of $A$, assume that $\Theta(\fz \otimes \sm^l) = 0$ for some $l > 0$, let $n = \min \{ l > 0 \mid \Theta(\fz \otimes \sm^l) = 0 \}$, and assume that $n > 1$.  Let
    \begin{gather*}
    \mathcal K_\Theta 
    := \{ I \subset A \mid I \textup{ is a proper ideal of $A$ such that } \Theta(\fz \otimes I) = 0\}, \\
    \kteta \textup{ be the largest (unique, with respect to containment) ideal within } \mathcal K_\Theta,
    \end{gather*}
and notice that $\sm^n \subseteq \kteta \subsetneq \sm$.

Since $A$ is an associative, commutative, finitely-generated unital $\C$-algebra, by Hilbert's Nullstellensatz, there exist $r > 0$, $a_1, \dotsc, a_r \in \C$, and surjective homomorphisms of algebras
    \[
    \C[t_1, \dotsc, t_r] / \langle t_1-a_1, \dotsc, t_r-a_r \rangle^n
    \twoheadrightarrow A / \sm^n
    \twoheadrightarrow A / \kteta.
    \]
For each $i \in \{1, \dotsc, r\}$, denote $(t_i-a_i)$ by $T_i$, and for each $f \in \C[t_1, \dotsc, t_r]$, denote its image onto $A / \kteta$ by $\bar f$.  Let $\mathcal I_\Theta := \{ (i_1, \dotsc, i_r) \in \Z_{\ge0}^r \mid \Theta (z \otimes \overline{T_1}^{i_1} \dotsm \overline{T_r}^{i_r}) \ne 0 \}$.  Consider the usual partial order on $\Z^r$ \details{ $(i_1, \dotsc, i_r) \preceq (j_1, \dotsc, j_r)$ when $i_1 \le j_1 , \dotsc, i_r \le j_r$}.  Since $\Theta(\fz \otimes \sm^n) = 0$, then $\mathcal I_\Theta$ is a finite subset of $\Z^r$, and hence there exist elements in $\mathcal I_\Theta$ which are maximal with respect to this partial order. Denote the set of these maximal elements by $\widehat{\mathcal I}_\Theta$.  For each $\widehat n = (n_1, \dotsc, n_r) \in \widehat{\mathcal I}_\Theta$, $(i_1, \dotsc, i_r) \preceq \widehat n$, $\alpha \in R^+_{\bar1}$, let 
    \[
    \left( y_\alpha \otimes \overline{T_1}^{i_1} \dotsm \overline{T_r}^{i_r} \right)^\star
    := x_\alpha \otimes \overline{T_1}^{n_1-i_1} \dotsm \overline{T_r}^{n_r-i_r}
    \in \fg_1[A/\kteta],
    \]
and for each $\lambda \in \C$, $p_1, p_2 \in \bU(\fg_{-1}[A/\kteta])$, let $(\lambda p_1 + p_2)^\star = \lambda p_1^\star + p_2^\star$, $(p_1 p_2)^\star := p_2^\star p_1^\star$.  (Notice that the definition of $^\star$ depends on the choice of $\widehat n$, which may not be unique.  For the rest of this section, however, the respective choices of $\widehat n$ will be clear from the context.  Thus this notation shall cause no confusion.)

Notice that, for a maximal ideal $\sm \subseteq A$, a functional $\Theta \in \fz[A]^*$ and a $\fg'_0$-module $\sV$, the Kac-like module $\bK_{A/I}(\Theta \boxtimes \sV^\sm)$ is well-defined if and only if $I \in \mathcal K_\Theta$.  The following result determines when such Kac-like modules are irreducible.

\begin{prop} \label{prop:main}
Let $\sm \subseteq A$ be a maximal ideal, $\Theta \in \fz[A]^*$ be such that $\sm^n \subseteq \kteta \subsetneq \sm$ for some $n > 1$, and $\sV$ be a finite-dimensional irreducible $\fg'_0$-module.  The $\fg[A/I]$-module $\bK_{A/I} (\Theta \boxtimes \sV^\sm)$ is irreducible if and only if $I = \kteta$.
\end{prop}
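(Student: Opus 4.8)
The plan is to prove both directions separately, using Lemma~\ref{lem:silly} as the main tool for the ``if'' direction and a direct construction of a proper submodule for the ``only if'' direction.

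First I would dispose of the ``only if'' direction. Suppose $I \subsetneq \kteta$ (equality is the claim; $I \supsetneq \kteta$ is impossible since $\kteta$ is largest in $\mathcal K_\Theta$, and if $I \notin \mathcal K_\Theta$ the module is undefined, so the only alternative to $I = \kteta$ is $I \subsetneq \kteta$). Then $\kteta / I$ is a nonzero ideal of $A/I$ annihilated by $\Theta$, so the quotient map $\fg[A/I] \twoheadrightarrow \fg[A/\kteta]$ induces a surjection $\bK_{A/I}(\Theta \boxtimes \sV^\sm) \twoheadrightarrow \bK_{A/\kteta}(\Theta \boxtimes \sV^\sm)$ whose kernel is $(\fg_{-1} \otimes (\kteta/I)) \cdot \bK_{A/I}(\Theta \boxtimes \sV^\sm)$. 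This kernel is a nonzero proper submodule (nonzero because $\fg_{-1}[A/I]$ strictly contains the image of $\fg_{-1}[A/\kteta]$, and $\bK$ is free over $\bU(\fg_{-1})$ on $\sM$; proper because the target is nonzero), so $\bK_{A/I}(\Theta \boxtimes \sV^\sm)$ is reducible.

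For the ``if'' direction, set $B = A/\kteta$, $d = \dim \fg_{-1}[B]$, and $\sM = \Theta \boxtimes \sV^\sm$, and let $W \ne \{0\}$ be a $\fg[B]$-submodule of $\bK_B(\sM)$; I want to show $W = \bK_B(\sM)$. By Lemma~\ref{lem:silly} there is a nonzero element of the form $p \otimes v^+ \in W$ with $0 \ne p \in \bLambda^d(\fg_{-1}[B])$ and $v^+$ a highest-weight vector of $\sM$. Since $\bK_B(\sM) = \bU(\fg[B]) \cdot (1 \otimes v^+)$ and, by irreducibility of $\sM$ as a $\fg_0[B]$-module together with the fact that $\bLambda^d(\fg_{-1}[B])$ is a trivial one-dimensional $\fg_0[B]$-module, it suffices to show that $1 \otimes v^+ \in W$ — equivalently, to ``raise'' the top-degree element $p \otimes v^+$ back to degree zero using $\fg_1[B]$. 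This is where the $\star$-operation and the commutation relations of Lemma~\ref{lem:comm.rels} enter: writing $p$ as (a scalar multiple of) a product of the $y_\alpha \otimes \overline{T_1}^{i_1}\dotsm\overline{T_r}^{i_r}$ that span $\fg_{-1}[B]$, one applies the corresponding product $p^\star$ of raising operators $x_\alpha \otimes \overline{T_1}^{n_j - i_j}\dotsm$. Expanding $p^\star \cdot (p \otimes v^+)$ via Lemma~\ref{lem:comm.rels}, the leading term is a sum of products of Cartan elements $h_\alpha \otimes \overline{T_1}^{n_j}\dotsm$ (or $[x_\gamma,y_\beta]$-type terms that vanish or act diagonally) applied to $1 \otimes v^+$; by construction $\widehat n \in \widehat{\mathcal I}_\Theta$ is chosen exactly so that $\Theta(z \otimes \overline{T_1}^{n_1}\dotsm\overline{T_r}^{n_r}) \ne 0$, hence by \eqref{eq:z.hbeta.ne0} the relevant $h_\alpha \otimes \overline{T}^{\widehat n}$ acts on $v^+$ by a nonzero scalar, while all lower-degree correction terms land in $\bLambda^{>0}(\fg_{-1}[B]) \otimes \sM$ and can be handled by induction on the $\bLambda$-degree (or absorbed). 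One concludes $1 \otimes v^+ \in W$, hence $W = \bK_B(\sM)$.

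The main obstacle I anticipate is the bookkeeping in this last paragraph: one must choose the factorization of $p$ and the ordering of the raising operators in $p^\star$ so that, after fully expanding via the three cases of Lemma~\ref{lem:comm.rels}, the unwanted cross-terms are genuinely of strictly smaller $\bLambda$-degree (so that a clean induction closes), and one must verify that the surviving ``diagonal'' coefficient is a nonzero scalar — this uses both the maximality of $\widehat n$ in $\mathcal I_\Theta$ (to kill terms with a strictly larger power of some $\overline{T_i}$, which lie in $\fz \otimes \kteta$-image and hence vanish) and \eqref{eq:z.hbeta.ne0} (to see $h_\beta \otimes \overline{T}^{\widehat n}$ acts nontrivially, since $z$ itself acts via $\Theta$). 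Care is also needed because $\fz[B]$ acts on $\sM$ through $\Theta$ only in degrees $\preceq \widehat n$; terms of higher degree in $\overline{T}$ vanish automatically, which is what makes the argument terminate.
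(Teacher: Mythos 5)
Your proposal is correct, and the ``if'' direction (irreducibility of $\bK_{A/\kteta}(\Theta\boxtimes\sV^\sm)$) follows exactly the paper's route: Lemma~\ref{lem:silly} to locate $p\otimes v^+$ in any nonzero submodule, then $p^\star$ built from some $\widehat n\in\widehat{\mathcal I}_\Theta$ together with Lemma~\ref{lem:comm.rels} and \eqref{eq:z.hbeta.ne0} to recover $1\otimes v^+$; the bookkeeping issues you flag (maximality of $\widehat n$ killing higher-degree terms, the nonzero diagonal coefficient) are precisely the points the paper relies on. Where you diverge is the reducibility direction: you exhibit the kernel of the canonical surjection $\bK_{A/I}(\Theta\boxtimes\sV^\sm)\twoheadrightarrow\bK_{A/\kteta}(\Theta\boxtimes\sV^\sm)$ as a nonzero proper submodule, whereas the paper instead picks $a\in\kteta\setminus I$ and a simple odd root $\alpha\in\Delta_{\bar1}$ and checks that $(y_\alpha\otimes(a+I))\otimes v$ is a new highest-weight vector not generating the whole module. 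Both are valid; yours is arguably cleaner and is essentially the same mechanism the paper later uses (the map $\psi$ and the subspace $\Omega$) in Lemma~\ref{lem:W=oplus}\ref{lem:W=oplus.a}, while the paper's version produces an explicit singular vector, which is reused implicitly when analyzing maximal submodules.
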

\begin{proof}
We will begin by proving that the Kac-like module $\bK_{A/I}(\Theta \boxtimes \sV^\sm)$ is reducible if $I \in \mathcal K_{\Theta}$ is not $\kteta$.  Let $v \ne 0$ be a highest-weight vector in $\bK_{A/I}(\Theta \boxtimes \sV^\sm)$. Since $I \in \mathcal K_\Theta$ and $\kteta$ is the largest ideal in $\mathcal K_\Theta$, then $I \subseteq \kteta$.  Since $I \ne \kteta$, then there exists $a \in \kteta \setminus I$.  Thus, for a simple root $\alpha\in \Delta_{\bar1}$, the element $(y_\alpha \otimes (a + I)) \otimes v \in \bK_{A/I}(\Theta \boxtimes \sV^\sm)$ is nonzero.  Since $\alpha$ is a simple odd root and $a \in \kteta$, one can check that $(y_\alpha \otimes (a+I)) \otimes v$ is a highest-weight vector in $\bK_{A/I}(\Theta \boxtimes \sV^\sm)$.  In particular, $v$ is not in the submodule of $\bK_{A/I}(\Theta \boxtimes \sV^\sm)$ generated by $(y_\alpha \otimes (a+I)) \otimes v$.  This shows that $\bK_{A/I}(\Theta \boxtimes \sV^\sm)$ is reducible.

Next, we will prove that $\bK_{A/\kteta}(\Theta \boxtimes \sV^\sm)$ is irreducible. Let $d = \dim \fg_{-1}[A/\kteta]$ and choose a nonzero $p \in \bLambda^d (\fg_{-1}[A/\kteta])$.  For every highest-weight vector $v \in \Theta \boxtimes \sV^\sm$, and every $\widehat n \in \widehat{\mathcal I}_\Theta$, the vector $p^\star p \otimes v$ is a nonzero multiple of $v$.  In fact, this follows from successively applying Lemma~\ref{lem:comm.rels}, together with the fact that, for every $\beta \in R^+_{\bar1}$ and $\widehat n = (n_1, \dotsc, n_r) \in \widehat{\mathcal I}_{\Theta}$, there exists $\lambda \ne 0$ such that $(h_\beta \otimes \overline{T_1}^{n_1} \dotsm \overline{T_r}^{n_r}) v = \lambda \Theta(z \otimes \overline{T_1}^{n_1} \dotsm \overline{T_r}^{n_r}) v \ne 0$ (see \eqref{eq:z.hbeta.ne0}). Since, by Lemma~\ref{lem:silly}, any nonzero submodule of $\bK_{A/\kteta}(\Theta \boxtimes \sV^\sm)$ contains the vector $p\otimes v$, the result follows.
\end{proof}

\begin{rem} \label{rem:main}
For all ideals $J \subseteq I \subseteq A$, there is a canonical projection $\pi_{I,J,} : A/J \twoheadrightarrow A/I$ of algebras, which induces a surjective homomorphism $
\tilde\pi_{I,J} : \fg[A/J] \twoheadrightarrow \fg[A/I]$ of Lie algebras.  Notice that the pull-back of every irreducible $\fg[A/I]$-module through $\tilde\pi_{I,J}$ is an irreducible $\fg[A/J]$-module.  In particular, by Proposition~\ref{prop:main}, the pull-back of the Kac-like module $\bK_{A/\kteta}(\Theta \boxtimes \sV^\sm)$ through $\tilde\pi_{\kteta, \sm^n}$ is irreducible as a $\fg[A/\sm^n]$-module for all $n > 0$ such that $\sm^n \subseteq \kteta$.
\end{rem}

\begin{rem}
Notice that throughout this section, we have assumed that $n > 1$.  In the case where $n = 1$  \details{ and thus, $\kteta = \sm$}, we would have $A / \kteta \cong \C$ and $\fg[A/\kteta] \cong \fg$.  In this case, any irreducible finite-dimensional $\fg[A/\kteta]$-module is isomorphic to an evaluation module, and hence, such a module is isomorphic to a Kac module if and only if its highest weight is \emph{typical} \cite[Theorem~1]{Kac78}.
\end{rem}

The following result provides a complete classification of all finite-dimensional irreducible $\fg[A]$-modules.  Compare it with \cite[Theorem~7.2]{Savage14}, \cite[Theorem~7.1]{CMS16} and \cite[Theorem~3.9]{CM19}.

\begin{theo} \label{theo:class.irreps}
If $L$ is a finite-dimensional irreducible $\fg[A]$-module, then there exist a finite\--dimen\-sion\-al evaluation $\fg[A]$-module $M$, finite-dimensional irreducible $\fg_0'$-modules $\sV_1, \dotsc, \sV_l$, distinct maximal ideals $\sm_1, \dotsc, \sm_l \subseteq A$, integers $n_1, \dotsc, n_l > 1$, and $\Theta_1, \dotsc, \Theta_l \in \fz[A]^*$, such that $\sm_i^{n_i} \subseteq \sk_{\Theta_i} \subsetneq \sm_i$ for all $i \in \{1, \dotsc, l\}$, and
    \begin{equation} \label{eq:class.irred}
    L \cong (\bK_{A/\sk_{\Theta_1}}(\Theta_1 \boxtimes \sV_1^{\sm_1}))^{\sm_1^{n_1}} \otimes \dotsb \otimes (\bK_{A/\sk_{\Theta_l}} (\Theta_l \boxtimes \sV_l^{\sm_l}))^{\sm_l^{n_l}} \otimes M,
    \end{equation}
as $\fg[A]$-modules.
\end{theo}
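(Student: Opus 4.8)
The plan is to reduce the general classification to the building blocks already constructed in this section, using the standard structure theory of finite-dimensional irreducible modules over map (super)algebras. First I would invoke the known result (cited as \cite[Theorem~7.2]{Savage14}; see also \cite{CFK10, NSS12, CMS16, CM19}) that every finite-dimensional irreducible $\fg[A]$-module $L$ is isomorphic to $\bV_A(\sL)$ for some finite-dimensional irreducible $\fg_0[A]$-module $\sL$, and moreover that such $L$ has an annihilator of finite codimension: there is a finite-codimensional ideal $I\subseteq A$ with $(\fg\otimes I)L=0$, so that $L$ factors through $\fg[A/I]$. Since $A/I$ is a finite-dimensional commutative algebra, by the Chinese Remainder Theorem it splits as a product $\prod_{i=1}^l A/\sm_i^{n_i}\times(\text{semisimple part})$ over the maximal ideals $\sm_1,\dots,\sm_l$ appearing in its support (together with possibly some maximal ideals at which the local factor is just $\C$). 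Correspondingly $\fg[A/I]$ decomposes as a direct sum of the $\fg[A/\sm_i^{n_i}]$ and copies of $\fg$, and an irreducible module over such a direct sum of ideals is an (outer) tensor product of irreducibles over the factors. This gives
\[
L\cong L_1^{\sm_1^{n_1}}\otimes\dots\otimes L_l^{\sm_l^{n_l}}\otimes M,
\]
where each $L_i$ is a finite-dimensional irreducible $\fg[A/\sm_i^{n_i}]$-module and $M$ is a module over the semisimple part, i.e. a tensor product of evaluation modules — that is, an evaluation $\fg[A]$-module in the sense of the statement. After relabelling we may assume $n_i>1$ for all the "local" factors and absorb all $n_i=1$ factors into $M$.

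Next I would analyze a single local factor $L_i$, an irreducible $\fg[A/\sm_i^{n_i}]$-module. By the same structure result, $L_i\cong\bV_{A/\sm_i^{n_i}}(\sL_i)$ for a finite-dimensional irreducible $\fg_0[A/\sm_i^{n_i}]$-module $\sL_i$. Since $\fg_0=\fg_0'\oplus\fz$ with $\fg_0'$ semisimple, an irreducible finite-dimensional $\fg_0[A/\sm_i^{n_i}]$-module factors as $\Theta_i\boxtimes\sV_i^{\sm_i}$, where $\sV_i$ is a finite-dimensional irreducible $\fg_0'$-module pulled back through evaluation at $\sm_i$ (the $\fg_0'$-part, being over a semisimple Lie algebra, must be an evaluation module, and as the whole thing already factors through a single local ring with residue field $\C$, the evaluation is at $\sm_i$ itself), and $\Theta_i\in\fz[A/\sm_i^{n_i}]^*\subseteq\fz[A]^*$ is the central character, which satisfies $\Theta_i(\fz\otimes\sm_i^{n_i})=0$. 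Let $k_i:=\sk_{\Theta_i}$ be the largest ideal in $\mathcal K_{\Theta_i}$; by construction $\sm_i^{n_i}\subseteq k_i\subsetneq\sm_i$ (if $k_i=\sm_i$ we are in the $n=1$ case, already absorbed into $M$). By the observation before Proposition~\ref{prop:main}, $\bK_{A/k_i}(\Theta_i\boxtimes\sV_i^{\sm_i})$ is well-defined, and by Proposition~\ref{prop:main} it is irreducible, hence equals its own irreducible quotient $\bV_{A/k_i}(\Theta_i\boxtimes\sV_i^{\sm_i})$. Finally, by the remark following the proof of \cite[Proposition~6.4]{Savage14} quoted at the start of this section, $L_i\cong\bV_{A/k_i}(\sL_i)$ as soon as $(\fg\otimes k_i)L_i=0$; this holds because $k_i\in\mathcal K_{\Theta_i}$ and $\Theta_i$ is the central character of $L_i$, so indeed $L_i\cong\bK_{A/k_i}(\Theta_i\boxtimes\sV_i^{\sm_i})$, which pulled back to $\fg[A/\sm_i^{n_i}]$ (and then to $\fg[A]$) is exactly the factor $(\bK_{A/\sk_{\Theta_i}}(\Theta_i\boxtimes\sV_i^{\sm_i}))^{\sm_i^{n_i}}$ appearing in \eqref{eq:class.irred}. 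Substituting back into the tensor product decomposition of $L$ yields the claim.

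The main obstacle is the identification $L_i\cong\bV_{A/k_i}(\Theta_i\boxtimes\sV_i^{\sm_i})$, i.e.\ verifying that the annihilator of $L_i$ (as an $\fg[A/\sm_i^{n_i}]$-module, or equivalently the largest ideal $J\subseteq A$ with $(\fg\otimes J)L_i=0$) is precisely $k_i$, and not something strictly smaller. One inclusion is the definition of $k_i$ as the largest ideal with $\Theta_i(\fz\otimes k_i)=0$ combined with the cited consequence of \cite[Proposition~6.4]{Savage14} giving $L_i\cong\bV_{A/k_i}(\sL_i)$; the reverse — that $\bK_{A/k_i}(\Theta_i\boxtimes\sV_i^{\sm_i})$ does not factor through any $A/J$ with $k_i\subsetneq J$ — is exactly the reducibility half of Proposition~\ref{prop:main} applied in the contrapositive, since $\bV=\bK$ here. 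So all the genuine work has been isolated into Lemma~\ref{lem:silly} and Proposition~\ref{prop:main}; what remains for this theorem is the bookkeeping of the Chinese Remainder splitting, the factorization of $\fg_0[A/\sm_i^{n_i}]$-irreducibles into a $\fz$-character times a $\fg_0'$-evaluation module, and the compatibility of the functors $\bK_{(-)}$ and $\bV_{(-)}$ with pullback along $\tilde\pi_{k_i,\sm_i^{n_i}}$ (Remark~\ref{rem:main}).
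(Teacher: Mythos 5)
Your proposal follows essentially the same route as the paper: first reduce to a tensor product of irreducible modules over the local quotients $\fg[A/\sm_i^{n_i}]$ (the paper simply cites \cite[Lemma~6.3(c)]{Savage14} for this decomposition rather than re-deriving it via the Chinese Remainder Theorem, but the content is the same), absorb the $n_i=1$ factors into the evaluation module $M$, and then identify each remaining factor with a Kac-like module by means of Proposition~\ref{prop:main} and Remark~\ref{rem:main}.

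The one step that is under-justified is your claim that $(\fg\otimes\sk_{\Theta_i})L_i=0$ ``because $\sk_{\Theta_i}\in\mathcal K_{\Theta_i}$ and $\Theta_i$ is the central character of $L_i$.'' Vanishing of the central character on $\fz\otimes\sk_{\Theta_i}$ says nothing a priori about the action of $\fg_0'\otimes\sk_{\Theta_i}$ or $\fg_{\pm1}\otimes\sk_{\Theta_i}$, so you cannot invoke the consequence of \cite[Proposition~6.4]{Savage14} with $I=\sk_{\Theta_i}$ at that point without further argument. The clean way around this --- and the way the paper's appeal to Proposition~\ref{prop:main} and Remark~\ref{rem:main} should be read --- is to observe that the pullback of $\bK_{A/\sk_{\Theta_i}}(\Theta_i\boxtimes\sV_i^{\sm_i})$ to $\fg[A/\sm_i^{n_i}]$ is simultaneously a quotient of $\bK_{A/\sm_i^{n_i}}(\Theta_i\boxtimes\sV_i^{\sm_i})$ and irreducible; since a Kac-like module has a unique irreducible quotient, namely $W_i\cong\bV_{A/\sm_i^{n_i}}(\Theta_i\boxtimes\sV_i^{\sm_i})$, the identification $W_i\cong\bK_{A/\sk_{\Theta_i}}(\Theta_i\boxtimes\sV_i^{\sm_i})$ follows at once, and the triviality of the $\fg\otimes\sk_{\Theta_i}$-action comes out as a consequence rather than being needed as a hypothesis. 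With that repair your argument is complete and coincides with the paper's.
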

\begin{proof}
Since $L$ is a finite-dimensional irreducible $\fg[A]$-module, by \cite[Lemma~6.3(c)]{Savage14}, it is a generalized evaluation module, that is,
    \[
    L \cong W_1^{\sm_1^{n_1}} \otimes \dotsb \otimes W_1^{\sm_k^{n_k}},
    \]
for some $k, n_1, \dotsc, n_k \in \Z_{>0}$, distinct maximal ideals $\sm_1, \dotsc, \sm_k \subseteq A$, and irreducible $\fg[A/\sm_i^{n_i}]$-modules $W_i$ ($i \in \{1, \dotsc, k\}$).  Fix $i \in \{1, \dotsc, k\}$.  If $n_i = 1$, then $W_i^{\sm_i}$ is an evaluation module.  Otherwise, the module $W_i$ is irreducible for $\fg[A/\sm_i^{n_i}]$, and it follows from Proposition~\ref{prop:main} and Remark~\ref{rem:main} that $W_i \cong \bK_{A/\sk_{\Theta_i}} (\Theta_i \boxtimes \sV_i^{\sm_i})$ for some finite-dimensional irreducible $\fg_0'$-module $\sV_i$ and some $\Theta_i \in \fz[A]^*$ such that $\sm_i^{n_i} \subseteq \sk_{\Theta_i} \subsetneq \sm_i$.
\end{proof}

\begin{rem}
Let $\Pi$ denote the parity change functor in the category of super vector spaces. \details{ $\Pi$ preserves the morphisms and, for each super vector space $V = V_{\bar 0} \oplus V_{\bar 1}$, $\Pi V$ is defined to be the vector space $V$ with $\Z_2$-grading given by $(\Pi V)_{\bar 0} = V_{\bar 1}$ and $(\Pi V)_{\bar 1} = V_{\bar 0}$.}  Notice that $\Pi$ induces a functor in the category of finite-dimensional $\fg[A]$-modules.  Also notice that, since we assume that the morphisms in the category of $\fg[A]$-modules are even, then for every $\fg[A]$-module $M$, the $\fg[A]$-modules $M$ and $\Pi M$ are not isomorphic. Thus, the classification provided in Theorem~\ref{theo:class.irreps}, as well as those given in \cite[Theorem~7.2]{Savage14}, \cite[Theorem~7.1]{CMS16} and \cite[Theorem~3.9]{CM19}, are up to application of $\Pi$, that is, either $L$ or $\Pi L$ is of the form \eqref{eq:class.irred}.
\end{rem}

Recall that (super)dimensions and (super)characters of evaluation modules are the same as the (super)dimensions and (super)characters of their respective underlying $\fg$-modules. We finish this section by describing (super)dimensions and (super)characters of irreducible Kac-like modules.

\begin{prop} \label{prop:dim+char}
Let $\sm \subseteq A$ be a maximal ideal, $\Theta \in \fz[A]^*$ be such that $\sm^n \subseteq \kteta \subsetneq \sm$, and $\sV$ be a finite-dimensional irreducible $\fg_0'$-module.  Then
\begin{gather*}
\dim \bK_{A/\kteta} (\Theta \boxtimes \sV^\sm)
= 2^{\dim \fg_{-1} \dim A/\kteta} \dim \sV,
\\
\sdim \bK_{A/\kteta} (\Theta \boxtimes \sV^\sm)
= 0,
\\
\ch \bK_{A/\kteta} (\Theta \boxtimes \sV^\sm)
= \ch \bLambda(\fg_{-1})^{\dim(A/\kteta)} \ \ch(\Theta \boxtimes \sV),
\\
\sch \bK_{A/\kteta} (\Theta \boxtimes \sV^\sm) 
= \sch \bLambda(\fg_{-1})^{\dim(A/\kteta)} \ \ch(\Theta \boxtimes \sV).
\end{gather*}
\end{prop}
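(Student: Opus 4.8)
The plan is to read off every quantity from the vector-space decomposition $\bK_{A/\kteta}(\Theta \boxtimes \sV^\sm) \cong \bU(\fg_{-1}[A/\kteta]) \otimes (\Theta \boxtimes \sV^\sm)$, established in the Introduction, combined with the Poincar\'e--Birkhoff--Witt identification $\bU(\fg_{-1}[A/\kteta]) \cong \bLambda(\fg_{-1}[A/\kteta])$ (valid because $\fg_{-1}[A/\kteta]$ is a purely odd abelian subsuperalgebra). First I would note that $\fg_{-1}[A/\kteta] \cong \fg_{-1} \otimes A/\kteta$ as vector spaces, so $\dim \fg_{-1}[A/\kteta] = \dim\fg_{-1} \cdot \dim(A/\kteta)$, and therefore $\dim \bLambda(\fg_{-1}[A/\kteta]) = 2^{\dim\fg_{-1}\,\dim(A/\kteta)}$. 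Multiplying by $\dim \sV$ (note $\dim(\Theta \boxtimes \sV^\sm) = \dim \sV$, since $\Theta$ is a one-dimensional $\fz$-action) gives the first formula.

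For the superdimension, I would observe that the Grassmann algebra $\bLambda(U)$ on an odd vector space $U \ne 0$ has equal even and odd dimensions: pairing up via multiplication by any fixed nonzero vector $u \in U$ gives a parity-reversing bijection between $\ker(u\wedge -)$ and its complement, or more simply one computes $\sdim \bLambda(U) = \prod (1 - 1) = 0$ over a basis (each basis vector contributes a factor $1 - 1$ to the superdimension generating polynomial). Since $\fg_{-1}[A/\kteta] \ne 0$ (because $\fg_{-1}$ is a nonzero $\fg_0$-module and $A/\kteta \ne 0$), this forces $\sdim \bK_{A/\kteta}(\Theta \boxtimes \sV^\sm) = \sdim\bLambda(\fg_{-1}[A/\kteta]) \cdot \dim(\Theta \boxtimes \sV^\sm) = 0$.

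For the character and supercharater formulas, I would use that $\fh$ acts on $\bK_{A/\kteta}(\Theta \boxtimes \sV^\sm)$ diagonalizably, the weights being sums of a weight of $\bLambda(\fg_{-1}[A/\kteta])$ and a weight of $\Theta \boxtimes \sV^\sm$, so that $\ch$ and $\sch$ are multiplicative over the tensor product $\bLambda(\fg_{-1}[A/\kteta]) \otimes (\Theta \boxtimes \sV^\sm)$. The key point is that the $\fh$-weights only depend on the $\fg_{-1}$-factor and not on the $A/\kteta$-factor: the weight of $y_\alpha \otimes a$ equals that of $y_\alpha$ for any $a \in A/\kteta$. Hence $\bLambda(\fg_{-1}[A/\kteta])$ has, as an $\fh$-module, the same character as $\bLambda(\fg_{-1})^{\otimes \dim(A/\kteta)}$, i.e.\ $\ch\bLambda(\fg_{-1})^{\dim(A/\kteta)}$, and likewise for $\sch$ once one checks the parities: each copy of $\fg_{-1}$ sits in odd degree, so the $\Z_2$-grading of $\bLambda(\fg_{-1}[A/\kteta])$ is the graded tensor power of that of $\bLambda(\fg_{-1})$, giving $\sch \bLambda(\fg_{-1}[A/\kteta]) = \sch\bLambda(\fg_{-1})^{\dim(A/\kteta)}$. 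Finally, $\ch(\Theta \boxtimes \sV^\sm) = \ch(\Theta \boxtimes \sV)$ as an $\fh$-character (the evaluation at $\sm$ does not change weights), and this factor is purely even, so it appears as $\ch(\Theta \boxtimes \sV)$ in both the character and supercharacter formulas. Assembling these gives the last two identities.

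The only place requiring any care is the bookkeeping of $\Z_2$-gradings in the supercharacter computation — one must be sure that ``$\sch$ of a graded tensor power is the power of $\sch$'' with the right signs, which holds precisely because $\fg_{-1}$ is odd so all $\dim(A/\kteta)$ copies contribute the same parity shift; everything else is a direct consequence of the PBW vector-space description. I do not anticipate a genuine obstacle.
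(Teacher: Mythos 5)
Your argument is correct and is exactly the paper's proof: the paper likewise derives all four formulas from the isomorphism $\bK_{A/\kteta}(\Theta \boxtimes \sV^\sm) \cong \bLambda(\fg_{-1})^{\otimes \dim(A/\kteta)} \otimes (\Theta \boxtimes \sV)$ as $\fg_0$-modules, which is precisely your PBW identification $\bU(\fg_{-1}[A/\kteta]) \cong \bLambda(\fg_{-1}\otimes A/\kteta)$ together with the observation that the $A/\kteta$-factor carries no $\fh$-weight. You have merely written out the dimension, parity, and weight bookkeeping that the paper leaves implicit.
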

\begin{proof}
These formulas follow from the fact that $\bK_{A/\kteta}(\Theta \boxtimes \sV^\sm)$ is isomorphic, as a $\fg_0$-module, to $\bLambda(\fg_{-1})^{\otimes \dim(A/\kteta)} \otimes (\Theta \boxtimes \sV)$.
\end{proof}

Notice that, using Theorem~\ref{theo:class.irreps} and Proposition~\ref{prop:dim+char}, one can in fact completely determine the (super)dimension and (super)characters of every finite-dimensional irreducible $\fg[A]$-module, as the (super)character of $\fg_{-1}$ is known (see \cite[Proposition~2.1.2]{Kac77}).
\details{ When $\fg \cong \mathfrak{sl}(m|n)$ and $1 \le m < n$, then $\fg_0 \cong \fsl(m)\oplus \fsl(n)\oplus \C$ and $\fg_{-1} \cong (\C^m)^* \otimes (\C^n) \otimes \C$ as a $\fg_0$-module;  when $\fg \cong \mathfrak{sl}(n|n)$ and $1 < n$, then $\fg_0 \cong \fsl(n)\oplus \fsl(n)$ and $\fg_{-1} \cong (\C^n)^* \otimes (\C^n)$ as a $\fg_0$-module;  and when $\fg \cong \mathfrak{osp}(2|2n)$ and $1 < n$, then $\fg_0 \cong \mathfrak{sp}(2n) \oplus \C$ and $\fg_{-1} \cong \C^{2n} \otimes \C$ as a $\fg_0$-module.}

%%%%%%%%%%%%%%%%%%%%%%%%
\section{Extensions}  %%
%%%%%%%%%%%%%%%%%%%%%%%%
\label{sec:exts}

Let $\fa$ be a Lie superalgebra and recall that $\bU(\fa)$ is an associative unital ring. Given two $\fa$-modules $V$ and $W$, the set $\Hom_{\fa}(V,W)$ (or, equivalently, $\Hom_{\bU(\fa)}(V,W)$) consists of all (not just $\Z_2$-graded) homomorphisms of $\fa$-modules (or, equivalently, $\bU(\fa)$-modules). That is,
    \[
    \Hom_{\fa}(V,W)
    := \{ \varphi \in \Hom_\C(V,W) \mid \varphi(xv) = x\varphi(v) \textup{ for all } x \in \fa, v \in V \}.
    \]
Recall that the category of all $\bU(\fa)$-modules has enough projectives, and consider a projective resolution of an object $V$ in this category, $\dotsm \to \sP_1 \to \sP_0 \to V \to 0$. Thus, $\Ext_{\fa}^n(V,W)$, the $n$-th extension group between two $\fa$-modules $V$ and $W$, is the $n$-homology of the cocomplex
    \[
    0
    \to \Hom_{\fa} (P_0,\, W)
    \to \Hom_{\fa} (P_1,\, W)
    \to \Hom_{\fa} (P_2,\, W)
    \to \dotsm
    \]
For instance, using the Koszul complex (see \cite[Section~16.3]{Musson12}), one can see that $\Ext_{\fa}^n(V,W)$ is isomorphic to the $n$-th homology of the cocomplex
\[
0
\to \Hom_\C(V,W)
\to \Hom_\C(\fa, \Hom_{\C}(V,W))
\to \Hom_\C(\Lambda^2\fa, \Hom_{\C}(V,W))
\to \cdots
\]

\details{
We point out that even when $V$ is not $\Z_2$-graded, the resolution $\dotsm \to \sP_1 \to \sP_0 \to V \to 0$ can be taken so that all $P_i$ are $\Z_2$-graded and all maps $P_i\to P_{i-1}$ are even. Moreover, in the case that $V$ is $\Z_2$-graded, the map $P_0\to V$ can be chosen to be even as well (the Koszul resolution \cite[Section~16.3]{Musson12}). Hence, if $V$ and $W$ are $\Z_2$-graded, then the spaces $\Hom_{\fa}(P_i,W)$ and hence the space $\Ext_{\fa}^n(V,W)$ inherits the $\Z_2$-grading from $\Hom_{\C}(P_i,W)$. The even part $\Ext_{\fa}^n(V,W)_{\bar 0}$ is nothing but the $n$-th extension group between $V$ and $W$ in the category of $\Z_2$-graded modules.}

Assume for the remainder of this section that $\fg \cong \fsl(m|n)$ or $\fosp(2|2n)$, $1 \le m \le n$, $1 < n$, that is, that $\fg$ is a basic classical Lie superalgebra of type~I or a finite-dimensional Lie superalgebra isomorphic to $\fsl(n|n)$, and also that $A$ is an associative, commutative, finitely-generated, unital algebra over $\C$.  Our main goal is to describe 1-extensions between finite-dimensional irreducible $\fg[A]$-modules.  By \cite[Theorem~5.7]{CM19}, this boils down to describing 1-extensions between (generalized) evaluation modules supported at the same maximal ideal.  

We begin with a few technical lemmas that will be needed in the proof of Theorem~\ref{thm:ext.kac}.  This first lemma reduces the computation of homomorphisms and extensions between certain Kac-like modules to the computation of homomorphisms and extensions between $\fg_0[A]$-modules.

\begin{lem} \label{lem:hom.ext.kac}
Let $\sM, \sN$ be finite-dimensional $\fg_0[A]$-modules.
\begin{enumerate}
\item \label{lem:hom.ext.kac.a}
If $\sN$ is irreducible, then $\bV_A(\sN)^{\fg_1[A]} = \sN$.

\item \label{lem:hom.ext.kac.b}
If $\sN$ is irreducible, there is a natural isomorphism of vector spaces $\Hom_{\fg[A]} (\bK_A(\sM),\, \bV_A(\sN)) \cong \Hom_{\fg_0[A]} (\sM, \sN)$.

\item \label{lem:hom.ext.kac.c}
If $\sN$ is irreducible and $\Hom_{\fh}(\sM, \Hom_\C(\fg_1[A], \sN)) = 0$, there is an isomorphism of vector spaces $\Ext^1_{\fg_+[A]}(\sM, \bV_A(\sN)) \cong \Ext^1_{\fg_0[A]}(\sM,\sN)$.

\item \label{lem:hom.ext.kac.d}
If $\sM, \sN$ are irreducible $\fg_0[A]$-modules and $\Hom_{\fh}(\sM, \Hom_\C(\fg_1[A], \sN)) = 0$, there is an isomorphism of vector spaces $\Ext^1_{\fg[A]} (\bK_{A}(\sM), \, \bV_A(\sN)) \cong \Ext^1_{\fg_0[A]} (\sM, \sN)$.
\end{enumerate}
\end{lem}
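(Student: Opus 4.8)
The statement bundles four claims, and the natural order is exactly (a)$\to$(b)$\to$(c)$\to$(d), since the later parts build on the earlier ones. For part~(a), the point is that $\bV_A(\sN)$ is the unique irreducible quotient of $\bK_A(\sN) = \bU(\fg[A]) \otimes_{\bU(\fg_+[A])} \sN$, which as a $\fg_0$-module (in fact as a $\fg_+[A]$-module under the $\fg_1[A]$-trivial action on the top piece) looks like $\bLambda(\fg_{-1}[A]) \otimes \sN$. The subspace annihilated by $\fg_1[A]$ inside $\bK_A(\sN)$ is precisely the top degree piece $1 \otimes \sN$ (one checks that $\fg_1[A]$ acts injectively on the lower exterior degrees, by the same commutator bookkeeping as in Lemma~\ref{lem:comm.rels}), and this copy of $\sN$ survives in the irreducible quotient because the maximal submodule meets it trivially ($\sN$ generates $\bK_A(\sN)$, so any submodule containing a nonzero vector of $1 \otimes \sN$ would be everything). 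So $\bV_A(\sN)^{\fg_1[A]} = \sN$ as claimed.

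**Part (b).** Given $\varphi \in \Hom_{\fg[A]}(\bK_A(\sM), \bV_A(\sN))$, restrict to the generating copy $1 \otimes \sM \subseteq \bK_A(\sM)$; since $\fg_1[A]$ kills $1\otimes \sM$, the image lands in $\bV_A(\sN)^{\fg_1[A]} = \sN$ by part~(a), giving a $\fg_0[A]$-map $\sM \to \sN$. Conversely a $\fg_0[A]$-map $\sM \to \sN$ is a $\fg_+[A]$-map (both sides have trivial $\fg_1[A]$-action), and induction $\bU(\fg[A]) \otimes_{\bU(\fg_+[A])} (-)$ followed by the projection $\bK_A(\sN) \twoheadrightarrow \bV_A(\sN)$ produces a $\fg[A]$-map $\bK_A(\sM) \to \bV_A(\sN)$. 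These two assignments are mutually inverse and visibly natural, so we get the asserted isomorphism; the adjunction $\Hom_{\fg[A]}(\bK_A(\sM), X) \cong \Hom_{\fg_+[A]}(\sM, X)$ is really what is doing the work, combined with $X^{\fg_1[A]}$ picking out where a $\fg_+[A]$-map out of a trivial module must land.

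**Parts (c) and (d): the main obstacle.** This is where the Lyndon--Hochschild--Serre (LHS) spectral sequence enters, which is the technical heart of the whole section. For (c), regard $\fg_+[A]$ as an extension of $\fg_0[A]$ by the abelian ideal $\fg_1[A]$; then the LHS spectral sequence $E_2^{p,q} = \Ext^p_{\fg_0[A]}(\sM, \HH^q(\fg_1[A], \bV_A(\sN)))$ converges to $\Ext^{p+q}_{\fg_+[A]}(\sM, \bV_A(\sN))$, where $\HH^q(\fg_1[A], -)$ is Lie superalgebra cohomology of the abelian superalgebra $\fg_1[A]$ (so $\HH^\bullet(\fg_1[A], -) = \Hom_\C(\bLambda^\bullet \fg_1[A], -)$). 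The five-term exact sequence gives
\[
0 \to \Ext^1_{\fg_0[A]}(\sM, \bV_A(\sN)^{\fg_1[A]}) \to \Ext^1_{\fg_+[A]}(\sM, \bV_A(\sN)) \to \Hom_{\fg_0[A]}(\sM, \HH^1(\fg_1[A], \bV_A(\sN))).
\]
By part~(a) the first term is $\Ext^1_{\fg_0[A]}(\sM, \sN)$, and the hypothesis $\Hom_{\fh}(\sM, \Hom_\C(\fg_1[A], \sN)) = 0$ is engineered precisely so that the third term vanishes: $\HH^1(\fg_1[A], \bV_A(\sN))$ is a subquotient of $\Hom_\C(\fg_1[A], \bV_A(\sN))$, and a weight argument (all weights of $\bV_A(\sN)$ relevant here agree with those of $\sN$ on $\fh$, up to the $\fg_{-1}$-shifts, which a $\fg_0[A]$-map out of $\sM$ cannot hit once the $\fh$-Hom with $\Hom_\C(\fg_1[A],\sN)$ is zero) forces $\Hom_{\fg_0[A]}(\sM, \HH^1(\fg_1[A], \bV_A(\sN))) = 0$. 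Hence the middle map is an isomorphism, proving (c). Finally (d) combines (b) and (c): $\Ext^1_{\fg[A]}(\bK_A(\sM), \bV_A(\sN)) \cong \Ext^1_{\fg_+[A]}(\sM, \bV_A(\sN))$ by the adjunction (induction is exact here since $\bU(\fg[A])$ is free over $\bU(\fg_+[A])$ by PBW, so $\bK_A = \bU(\fg[A])\otimes_{\bU(\fg_+[A])}(-)$ sends projective resolutions to projective resolutions and the Ext groups match), and then (c) rewrites the right-hand side as $\Ext^1_{\fg_0[A]}(\sM, \sN)$. The one delicate point to get right is the weight/vanishing argument for the third term of the five-term sequence — that is, verifying that the hypothesis on $\Hom_\fh$ is exactly strong enough — and making sure the identification $\HH^\bullet(\fg_1[A], -) = \Hom_\C(\bLambda^\bullet\fg_1[A], -)$ is used with the correct sign/parity conventions for the superalgebra cohomology.
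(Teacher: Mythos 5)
Your parts (b), (c) and (d) follow essentially the same route as the paper: Frobenius reciprocity combined with part (a) for (b); the Lyndon--Hochschild--Serre spectral sequence for $\fg_1[A]\trianglelefteq\fg_+[A]$, with the hypothesis used to kill $E_2^{0,1}$, for (c); and the freeness of $\bU(\fg[A])$ over $\bU(\fg_+[A])$ to transport projective resolutions for (d). The problem is part (a), on which (b) and (c) both depend.

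Two things go wrong there. First, your claim that the $\fg_1[A]$-annihilated subspace of $\bK_A(\sN)$ is exactly $1\otimes\sN$ is false: if $a\neq 0$ lies in an ideal of $A$ on which $\fg_0\otimes{-}$ annihilates $\sN$ (such ideals are nonzero and of finite codimension since $\sN$ is irreducible finite-dimensional), then for $\alpha\in R^+_{\bar1}$ and $v\in\sN$ the vector $(y_\alpha\otimes a)\otimes v$ is nonzero by PBW and satisfies $(x_\beta\otimes b)\bigl((y_\alpha\otimes a)\otimes v\bigr)=([x_\beta,y_\alpha]\otimes ab)\otimes v=0$, giving a $\fg_1[A]$-invariant vector in exterior degree one. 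These vectors lie in the maximal submodule $W$, so they do not contradict the statement of (a), but they do contradict your computation. Second, and more seriously, even if the invariants of $\bK_A(\sN)$ were exactly $1\otimes\sN$, nothing would follow about $\bV_A(\sN)^{\fg_1[A]}$: taking $\fg_1[A]$-invariants is only left exact, so the quotient can acquire new invariants --- an invariant vector of $\bV_A(\sN)$ lifts only to some $u\in\bK_A(\sN)$ with $\fg_1[A]u\subseteq W$, not $\fg_1[A]u=0$. The paper closes this by arguing directly in the quotient: write an invariant class as $n+m+W$ with $n\in\sN$ and $m\in\bLambda^+(\fg_{-1}[A])\otimes\sN$; invariance gives $\fg_1[A](m+W)\subseteq W$, so if $m+W\neq 0$ irreducibility would force $\bV_A(\sN)=\bU(\fg_{-1}[A])\bU(\fg_0[A])(m+W)$, which is impossible because every $\fz$-weight occurring there is a $\fz$-weight of $\sN$ shifted by a nonzero sum of $\fg_{-1}$-weights, so $\sN$ itself could not be contained in it. You need this (or an equivalent weight argument carried out in $\bV_A(\sN)$ rather than in $\bK_A(\sN)$) to make part (a), and hence (b) and (c), complete.
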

\begin{proof}
\ref{lem:hom.ext.kac.a}: 
First notice that $\sN \subseteq \bV_A(\sN)^{\fg_1[A]}$.  In order to prove that $\bV_A(\sN)^{\fg_1[A]} \subseteq \sN$, fix $v \in \bV_A(\sN)^{\fg_1[A]}$, let $W$ denote the unique maximal $\fg[A]$-submodule of $\bK_A(\sN)$ and $\bLambda^+ (\fg_{-1}[A])$ denote the augmentation ideal of $\bLambda (\fg_{-1}[A])$.  Thus, by the definition of $\bK_A(-)$ \eqref{def:kac-like}, there is an isomorphism of vector spaces $\bK_A(\sN) \cong \left( \bLambda^+ (\fg_{-1}[A]) \otimes \sN \right) \oplus \sN$.  Thus, $W$ identifies as a subspace of $\bLambda^+(\fg_{-1}[A]) \otimes \sN$.  Let $n \in \sN$ and $m \in \bLambda^+(\fg_{-1}[A]) \otimes \sN$ be such that $v = m + n + W$.  We claim that $m+W$ must be zero, thus implying that $v = n \in \sN$.
\details{
Since $v \in \bV_A(\sN)^{\fg_1[A]}$ and $n \in \sN \subseteq \bV_A(\sN)^{\fg_1[A]}$, then $\fg_1[A] (m + W) \subseteq W$.  Moreover, since $\bV_A(\sN)$ is irreducible, if $m+W$ were nonzero, then $m+W$ would generate $\bV_A(\sN)$ as a $\fg[A]$-module; that is, $\bV_A(\sN) = \bU(\fg_{-1}[A])\bU(\fg_0 [A])(m+W)$.  This would be an absurd, since the $\fz$-weights of the elements of $\bU(\fg_{-1}[A])\bU(\fg_0 [A])(m+W)$ are all distinct from the $\fz$-weights of the elements of $\sN$.
}

\ref{lem:hom.ext.kac.b}:
By the definition of $\bK_{A}(-)$~\eqref{def:kac-like} and the tensor-hom adjunction, we have an isomorphism of vector spaces
    \[
    \Hom_{\fg[A]} (\bK_{A}(\sM), \, \bV_A(\sN))
    \cong \Hom_{\fg_+[A]} (\sM, \, \bV_A(\sN)),
    \]
that is natural on both $\sM$ and $\sN$.  Further, since $\sM$ is trivial as a $\fg_1[A]$-module, we have an isomorphism of vector spaces 
    \[
    \Hom_{\fg_+[A]} (\sM, \, \bV_A(\sN))
    \cong \Hom_{\fg_0[A]} (\sM, \, \bV_A(\sN)^{\fg_1[A]}),
    \]
that is natural on both $\sM$ and $\sN$.  Since $\sN$ is assumed to be irreducible, it follows from part~\ref{lem:hom.ext.kac.a} that $\bV_A(\sN)^{\fg_1[A]} = \sN$.  Hence $\Hom_{\fg_0[A]} (\sM, \bV_A(\sN)^{\fg_1[A]}) \cong \Hom_{\fg_0[A]} (\sM, \sN)$.

\ref{lem:hom.ext.kac.c}:
Consider the Lyndon-Hochschild-Serre spectral sequence
    \[
    E_2^{p,q}
    \cong \Ext^p_{\fg_0[A]} (\sM,\, \HH^q (\fg_1[A], \bV_A(\sN)))
    \Rightarrow \Ext^{p+q}_{\fg_+[A]} (\sM,\, \bV_A(\sN)).
    \]
Observe that, in this second page, the only terms contributing to $\Ext^1_{\fg_+[A]} (\sM,\, \bV_A(\sN))$ are 
    \[
    E_2^{1,0}
    \cong \Ext^1_{\fg_0[A]} (\sM,\, \bV_A(\sN)^{\fg_1[A]})
    \cong \Ext^1_{\fg_0[A]} (\sM,\, \sN)
    \quad \textup{and} \quad
    E_2^{0,1}
    \cong 
    \Hom_{\fg_0[A]} (\sM,\, \HH^1(\fg_1[A],\, \bV_A(\sN))).
    \]
We claim that $E_2^{0,1} = 0$.  In fact, using the Koszul complex, one can see that $\HH^1(\fg_1[A], \bV_A(\sN))$ is a subquotient of $\Hom_\C (\fg_1[A], \bV_A(\sN))$.  Since $\Hom_{\fh}(\sM, \Hom_\C (\fg_1[A], \bV_A(\sN))) = 0$ by hypothesis, then $\Hom_{\fh} (\sM,\, \HH^1(\fg_1[A],\, \bV_A(\sN))) = 0$.  As a consequence, $\Hom_{\fg_0[A]} (\sM,\, \HH^1(\fg_1[A],\, \bV_A(\sN))) = 0$.  This implies that
    \[
    \Ext^{1}_{\fg_+[A]} (\sM, \bV_A(\sN))
    \cong E_2^{1,0}
    \cong \Ext^1_{\fg_0[A]} (\sM, \sN).
    \]

\ref{lem:hom.ext.kac.d}:
Consider $\sM$ as a $\fg_+[A]$-module via inflation and then a projective resolution of $\sM$ in the category of $\fg_+[A]$-modules, $\dotsm \to \sP_1 \to \sP_0 \to \sM \to 0$.  Now, notice that the PBW~Theorem implies that $\bU(\fg[A])$ is free as a right $\bU(\fg_+[A])$-module.  Hence, using this fact and the definition of $\bK_A(-)$~\eqref{def:kac-like}, one sees that $\dotsm \to \bK_A(\sP_1) \to \bK_A(\sP_0) \to \bK_A(\sM) \to 0$ is a projective resolution of $\bK_A(\sM)$ in the category of $\fg[A]$-modules. Now, using the definition of $\bK_A(-)$~\eqref{def:kac-like} and the tensor-hom adjunction, one sees that $\Ext^1_{\fg[A]} (\bK_A(\sM), \bV_A(\sN)) \cong \Ext^1_{\fg_+[A]}(\sM, \bV_A(\sN))$.
\details{
recall that $\Ext^1_{\fg[A]} (\bK_A(\sM), \bV_A(\sN))$ is the first cohomology of the cochain complex
    \[
    0
    \to \Hom_{\fg[A]} (\bK_A(\sP_0),\, \bV_A(\sN))
    \to \Hom_{\fg[A]} (\bK_A(\sP_1),\, \bV_A(\sN))
    \to \Hom_{\fg[A]} (\bK_A(\sP_2),\, \bV_A(\sN))
    \to \dotsm
    \]
Hence, using the definition of $\bK_A(-)$~\eqref{def:kac-like} and the tensor-hom adjunction, one sees that this cochain complex is isomorphic to the cochain complex
    \[
    0
    \to \Hom_{\fg_+[A]} (\sP_0,\, \bV_A(\sN))
    \to \Hom_{\fg_+[A]} (\sP_1,\, \bV_A(\sN))
    \to \Hom_{\fg_+[A]} (\sP_2,\, \bV_A(\sN))
    \to \dotsm
    \]
Since the first cohomology of this cochain complex is, by definition, $\Ext^1_{\fg_+[A]}(\sM, \bV_A(\sN))$, it follows that
    \[
    \Ext^1_{\fg[A]} (\bK_A(\sM), \bV_A(\sN))
    \cong \Ext^1_{\fg_+[A]}(\sM, \bV_A(\sN)).
    \]
}
The result follows from part~\ref{lem:hom.ext.kac.c}.
\end{proof}

The next lemma describes the structure of the unique maximal proper $\fg[A]$-submodule of Kac-like modules.  In fact, let $\sL$ be a finite-dimensional irreducible $\fg_0[A]$-module.  Recall that the Kac-like module $\bK_A(\sL)$ has a unique irreducible quotient $\bV_A(\sL)$, and hence a unique maximal proper $\fg[A]$-submodule. In the particular case where $\sL$ is an evaluation module, say $\sL = \Theta \boxtimes \sV^\sm$ for some maximal ideal $\sm \subseteq A$, a finite-dimensional irreducible $\fg_0'$-module $\sV$ and $\Theta \in \fz[A]^*$ such that $\Theta(\fz \otimes \sm) = 0$, then the Kac-like module $\bK_\C(\sL)$ coincides with the usual Kac module $K(\Theta \boxtimes \sV)$ (see \cite[Section~2]{Kac78}) and the irreducible module $\bV_\C(\sL)$ coincides with the irreducible quotient of $K(\Theta \boxtimes \sV)$.  Let $Z_{\Theta \boxtimes \sV}$ denote the unique maximal proper $\fg$-submodule of $K(\Theta \boxtimes \sV)$.

\begin{lem} \label{lem:W=oplus}
Let $\sm \subseteq A$ be a maximal ideal, $\sV_1, \sV_2$ be finite-dimensional irreducible $\fg'_0$-modules, $\Theta_1, \Theta_2 \in \fz[A]^*$ be such that $\Theta_1(\fz \otimes \sm^n) = \Theta_2(\fz \otimes \sm^n) = 0$ for some $n > 0$.  Denote by $W_1$ the maximal proper $\fg[A]$-submodule of $\bK_A(\Theta_1 \boxtimes \sV_1^\sm)$, and consider the following subspace of $\bK_A(\Theta_1 \boxtimes \sV_1^\sm)$:
    \[
    \Omega := \bigoplus_{i \ge 0} \Omega_i,
    \qquad \textup{where} \quad
    \Omega_i := \bLambda^i (\fg_{-1}[A]) \wedge (\fg_{-1} [\kteta_1]) \otimes_\C (\Theta_1 \boxtimes \sV_1).
    \]
\begin{enumerate}
\item \label{lem:W=oplus.a}
If $\sk_{\Theta_1} \ne \sm$, then $W_1$ is isomorphic to $\Omega$ as vector spaces.

\item \label{lem:W=oplus.b}
If $\sk_{\Theta_1} = \sm$, then $W_1$ is isomorphic to $\Omega \oplus Z_{\Theta_1 \boxtimes \sV_1}^\sm$ as vector spaces.

\item \label{lem:W=oplus.c}
If either $\sk_{\Theta_1} \ne \sm$ or $\sk_{\Theta_2} \ne \sm$, then there is an isomorphism of vector spaces
    \[
    \Hom_{\fg[A]} (W_1, \bV_A(\Theta_2 \boxtimes \sV_2^\sm))
    \cong \Hom_{\fg_0[A]}(\fg_{-1}[\sk_{\Theta_1}] \otimes (\Theta_1 \boxtimes \sV_1^\sm),\, \Theta_2 \boxtimes \sV_2^\sm).
    \]
\end{enumerate}
\end{lem}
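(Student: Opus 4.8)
Write $\sM_i:=\Theta_i\boxtimes\sV_i^\sm$ and use the identification of vector spaces $\bK_A(\sM)\cong\bLambda(\fg_{-1}[A])\otimes\sM$ coming from $\bU(\fg_{-1}[A])\cong\bLambda(\fg_{-1}[A])$. For \ref{lem:W=oplus.a} and \ref{lem:W=oplus.b} the plan is to pin down $\bV_A(\sM_1)$ and then realise $W_1$ as the kernel of the canonical surjection onto it. As a $\fg_0[A]$-module $\sM_1$ is annihilated by $\fg_0\otimes\sk_{\Theta_1}$ (its $\fz\otimes\sk_{\Theta_1}$-action is $\Theta_1|_{\fz\otimes\sk_{\Theta_1}}=0$, and $\fg'_0\otimes\sk_{\Theta_1}\subseteq\fg'_0\otimes\sm$ acts by zero), so it descends to $\fg_0[A/\sk_{\Theta_1}]$, and the projections $\fg[A]\twoheadrightarrow\fg[A/\sk_{\Theta_1}]$, $\sM_1\twoheadrightarrow\sM_1$ induce a $\fg[A]$-surjection $q\colon\bK_A(\sM_1)\twoheadrightarrow\bK_{A/\sk_{\Theta_1}}(\sM_1)$ which on underlying spaces is $\bigl(\bLambda(\fg_{-1}[A])\twoheadrightarrow\bLambda(\fg_{-1}[A/\sk_{\Theta_1}])\bigr)\otimes\id$; its kernel is the ideal of $\bLambda(\fg_{-1}[A])$ generated by $\fg_{-1}[\sk_{\Theta_1}]$, tensored with $\Theta_1\boxtimes\sV_1$ — that is, $\Omega$ — and $\Omega\subseteq W_1$. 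If $\sk_{\Theta_1}\ne\sm$, then $\sk_{\Theta_1}\subsetneq\sm$ and $\sm^n\subseteq\sk_{\Theta_1}$ for some $n>1$, so Proposition~\ref{prop:main} and Remark~\ref{rem:main} make $\bK_{A/\sk_{\Theta_1}}(\sM_1)$ irreducible, hence equal to $\bV_A(\sM_1)$; thus $W_1=\ker q\cong\Omega$, which is \ref{lem:W=oplus.a}. If $\sk_{\Theta_1}=\sm$, then $A/\sk_{\Theta_1}\cong\C$ and $\bK_{A/\sk_{\Theta_1}}(\sM_1)=K(\Theta_1\boxtimes\sV_1)^\sm$; composing $q$ with $K(\Theta_1\boxtimes\sV_1)^\sm\twoheadrightarrow\bV_\C(\Theta_1\boxtimes\sV_1)^\sm$, whose kernel is $Z_{\Theta_1\boxtimes\sV_1}^\sm$ by definition, identifies $\bV_A(\sM_1)=\bV_\C(\Theta_1\boxtimes\sV_1)^\sm$ and exhibits $W_1$ as a $\fg[A]$-module extension of $Z_{\Theta_1\boxtimes\sV_1}^\sm$ by $\Omega$, which is \ref{lem:W=oplus.b}.

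For \ref{lem:W=oplus.c} I would restrict homomorphisms to $\Omega_0:=\fg_{-1}[\sk_{\Theta_1}]\otimes_\C(\Theta_1\boxtimes\sV_1)$, the degree-one piece of $\Omega$. Lemma~\ref{lem:comm.rels} (together with $[\fg_1,\fg_{-1}]\subseteq\fg_0$ and the vanishing of $\fg_0\otimes\sk_{\Theta_1}$ on $\Theta_1\boxtimes\sV_1^\sm$) shows $\fg_1[A]\cdot\Omega_0=0$ in $\bK_A(\sM_1)$, so $\Omega_0$ is a $\fg_+[A]$-submodule of $W_1$ with trivial $\fg_1[A]$-action, isomorphic as a $\fg_0[A]$-module to $\fg_{-1}[\sk_{\Theta_1}]\otimes(\Theta_1\boxtimes\sV_1^\sm)$. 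By Lemma~\ref{lem:hom.ext.kac}\ref{lem:hom.ext.kac.a}, $\varphi\mapsto\varphi|_{\Omega_0}$ then defines a linear map $\Psi\colon\Hom_{\fg[A]}(W_1,\bV_A(\sM_2))\to\Hom_{\fg_0[A]}\bigl(\fg_{-1}[\sk_{\Theta_1}]\otimes(\Theta_1\boxtimes\sV_1^\sm),\sM_2\bigr)$, and the task is to see $\Psi$ is bijective. Injectivity: $\Omega_{i+1}=\fg_{-1}[A]\wedge\Omega_i$ shows $\Omega$ is generated over $\fg[A]$ by $\Omega_0$, so $\varphi|_{\Omega_0}=0$ forces $\varphi|_\Omega=0$ and hence $\varphi$ factors through $W_1/\Omega$, which is $\{0\}$ if $\sk_{\Theta_1}\ne\sm$ and $Z_{\Theta_1\boxtimes\sV_1}^\sm$ if $\sk_{\Theta_1}=\sm$; in the latter case $\sk_{\Theta_2}\ne\sm$, so $Z_{\Theta_1\boxtimes\sV_1}^\sm$ is killed by the ideal $\fg\otimes\sm$ while the irreducible module $\bV_A(\sM_2)$ — with $\Theta_2(\fz\otimes\sm)\ne0$ — is not, forcing $\Hom_{\fg[A]}(Z_{\Theta_1\boxtimes\sV_1}^\sm,\bV_A(\sM_2))=0$ and $\varphi=0$.

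Surjectivity of $\Psi$ is the heart of the argument, and I would base it on a central-character computation: if $\psi$ in the target of $\Psi$ is nonzero, then $\fg'_0[A]$-equivariance makes $\psi$ vanish on $\fg_{-1}\otimes\sm\sk_{\Theta_1}\otimes(\Theta_1\boxtimes\sV_1)$, while $\fz[A]$-equivariance gives $\psi\bigl((y\otimes at)\otimes m\bigr)=(\Theta_1-\Theta_2)(z\otimes a)\,\psi\bigl((y\otimes t)\otimes m\bigr)$ for $y\in\fg_{-1}$, $t\in\sk_{\Theta_1}$, $a\in A$; applying this with $a\in\sm$ forces $(\Theta_1-\Theta_2)(\fz\otimes\sm)=0$, and since $\Theta_1(\fz\otimes\sk_{\Theta_1})=0$ we get $\sk_{\Theta_1}\subseteq\sk_{\Theta_2}$. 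When $\sk_{\Theta_1}=\sm$ this contradicts $\sk_{\Theta_2}\ne\sm$, so the target of $\Psi$ is $0$, hence (by injectivity) so is the source — which is \ref{lem:W=oplus.c} in that case. When $\sk_{\Theta_1}\ne\sm$ (so $W_1=\Omega$) and $\psi\ne0$: since $\sk_{\Theta_1}\subseteq\sk_{\Theta_2}$ the space $\fg_{-1}[\sk_{\Theta_1}]$ has zero image in $\fg_{-1}[A/\sk_{\Theta_2}]$; I would lift $\psi$ to $\tilde\psi\in\Hom_{\fg[A]}(\bK_A(\Omega_0),\bV_A(\sM_2))$ using Lemma~\ref{lem:hom.ext.kac}\ref{lem:hom.ext.kac.b}, observe that the kernel of the canonical surjection $\bK_A(\Omega_0)\twoheadrightarrow\Omega$ is generated over $\fg[A]$ by the degree-two elements $(u_1\otimes u_2+u_2\otimes u_1)\otimes m$ with $u_i\in\fg_{-1}[\sk_{\Theta_1}]$ (this comes from the Koszul resolution of $\C$ over the exterior algebra $\bLambda(\fg_{-1}[\sk_{\Theta_1}])$, base-changed along the free extension $\bLambda(\fg_{-1}[A])$), and note that $\tilde\psi$ sends such an element to $\bar u_1\wedge\psi(u_2\otimes m)+\bar u_2\wedge\psi(u_1\otimes m)=0$, since $\fg_{-1}[A]$ acts on $\bV_A(\sM_2)=\bV_{A/\sk_{\Theta_2}}(\sM_2)$ through $\fg_{-1}[A/\sk_{\Theta_2}]$, where $\bar u_i=0$. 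Therefore $\tilde\psi$ descends to an element of $\Hom_{\fg[A]}(\Omega,\bV_A(\sM_2))$ whose image under $\Psi$ is $\psi$, so $\Psi$ is onto.

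The step I expect to require the most care is the Koszul-theoretic description of $\ker(\bK_A(\Omega_0)\to\Omega)$ used in the surjectivity argument (and, more routinely, the checks that $\sM_1$ descends and that $\fg\otimes\sm$ does not annihilate $\bV_A(\sM_2)$); the remaining steps are either direct bracket computations via Lemma~\ref{lem:comm.rels} or formal homological algebra.
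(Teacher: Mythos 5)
Your proof is correct, and for parts \ref{lem:W=oplus.a} and \ref{lem:W=oplus.b} it is the paper's own argument: the canonical $\fg[A]$-surjection $\bK_A(\Theta_1\boxtimes\sV_1^\sm)\twoheadrightarrow\bK_{A/\sk_{\Theta_1}}(\Theta_1\boxtimes\sV_1^\sm)$, the identification of its kernel with $\Omega$, and Proposition~\ref{prop:main} (resp.\ the further projection onto $\bV_\C(\Theta_1\boxtimes\sV_1)^\sm$) to recognise the target. In part \ref{lem:W=oplus.c} your framework also matches the paper's --- dispose of the $Z_{\Theta_1\boxtimes\sV_1}^\sm$ contribution via $\bV_A(\Theta_2\boxtimes\sV_2^\sm)^{\fg[\sm]}=0$, and compare $\Hom_{\fg[A]}(\Omega,\bV_A(\Theta_2\boxtimes\sV_2^\sm))$ with $\Hom_{\fg_0[A]}(\Omega_0,\Theta_2\boxtimes\sV_2^\sm)$ by restriction to $\Omega_0$, which is the paper's $\xi^*$ composed with Frobenius reciprocity --- but your surjectivity step is genuinely different. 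The paper shows that \emph{every} $h\in\Hom_{\fg[A]}(\bK_A(\Omega_0),\bV_A(\Theta_2\boxtimes\sV_2^\sm))$ kills $\ker\xi$, using the identity $[[x,y],y]=0$ for $x\in\fg_1$, $y\in\fg_{-1}$ to see that the image of a generator of $\ker\xi$ is $\fg_1[A]$-invariant and therefore generates a submodule of the irreducible target meeting $\Theta_2\boxtimes\sV_2^\sm$ trivially, hence vanishes. You instead first deduce from $\fg_0[A]$-equivariance that a nonzero $\psi$ forces $(\Theta_1-\Theta_2)(\fz\otimes\sm)=0$ and hence $\sk_{\Theta_1}\subseteq\sk_{\Theta_2}$, after which $\fg_{-1}[\sk_{\Theta_1}]$ annihilates $\bV_A(\Theta_2\boxtimes\sV_2^\sm)\cong\bV_{A/\sk_{\Theta_2}}(\Theta_2\boxtimes\sV_2^\sm)$ and the Koszul relations are killed for free. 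Your route costs the explicit (but standard, and correct) description of $\ker\xi$ from the Koszul resolution plus a weight computation, and it buys the extra information that the Hom-space can only be nonzero when $\sk_{\Theta_1}\subseteq\sk_{\Theta_2}$, which in particular settles the subcase $\sk_{\Theta_1}=\sm\ne\sk_{\Theta_2}$ at once. Two small points to tighten: your $\fz[A]$-equivariance identity should carry the (nonzero) eigenvalue of $z$ on $\fg_{-1}$ as a factor, and the vanishing of $\psi$ on $\fg_{-1}\otimes\sm\sk_{\Theta_1}\otimes(\Theta_1\boxtimes\sV_1)$ uses $[\fg'_0,\fg_{-1}]=\fg_{-1}$, which holds because $\fg_{-1}$ is a nontrivial irreducible $\fg'_0$-module for every $\fg$ considered here; neither affects the conclusion.
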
    
\begin{proof}
Set $\sL_1:=\Theta_1 \boxtimes \sV_1^\sm$ and $\sL_2:=\Theta_2 \boxtimes \sV_2^\sm$. Given $u = (x_1 \otimes a_1) \cdots (x_n \otimes a_n) \in \bU(\fg[A])$, let ${\bar u} \in \bU(\fg[A/\sk_{\Theta_1}])$ denote the monomial $(x_1\otimes (a_1+\sk_{\Theta_1})) \cdots (x_n\otimes (a_n+\sk_{\Theta_1}))$. By the definition of $\bK_A(-)$~\eqref{def:kac-like} and the universal property of $\otimes_{\bU(\fg_+[A])}$, there exists a unique $\fg_+[A]$-module map 
    \[
    \psi: \bK_{A} (\sL_1) \to \bK_{A/\sk_{\Theta_1}} (\sL_1)
    \qquad \textup{such that} \qquad
    \psi(u \otimes v) = {\bar u} \otimes v
    \quad \textup{ for all } \ u \in \bU(\fg[A]), \ v \in \sL_1.
    \]
Notice, moreover, that for all $u_1 \in \bU(\fg_{-1}[A])$, $u \in \bU(\fg[A])$, $v \in \sL_1$, we have $\psi(u_1u \otimes v) = \bar{u_1} \bar u \otimes v = u_1 \psi(u \otimes v)$. Hence, $\psi$ is in fact a surjective homomorphism of $\fg[A]$-modules.

\ref{lem:W=oplus.a}:
If $\sk_{\Theta_1}$ is assumed to not be a maximal ideal of $A$, then it follows from Proposition~\ref{prop:main} that $\bK_{A/\sk_{\Theta_1}}(\sL_1)$ is an irreducible $\fg[A]$-module.  Hence the maximal proper submodule of $\bK_A(\sL_1)$ is $\ker\psi$.  In order to show that $\ker\psi = \Omega$, first notice from the definition of $\psi$ that $\Omega_i \subseteq \ker\psi$ for all $i \in \Z_{\ge0}$.  Hence $\Omega \subseteq \ker\psi$.  The containment $\ker\psi \subseteq \Omega$ follows from the PBW~Theorem by choosing a basis for $\sk_{\Theta_1}$ and completing it to a basis for $A$.
\details{
It follows from the PBW~Theorem that $\bK_A(\sL_1) \cong \bU(\fg_{-1}[A])\otimes_\C \sL_1$ and $\bK_{A/\sk_{\Theta_1}} (\sL_1) \cong \bU(\fg_{-1}[A/\sk_{\Theta_1}])\otimes_\C \sL_1$ as vector spaces.  Write $A = (A/\sk_{\Theta_1}) \oplus \kteta_1$ as a vector space, choose a basis $\{b_i\mid i\in \mathtt I\}$ for $A/\sk_{\Theta_1}$, a basis $\{k_j\mid j\in \mathtt J\}$ for $\sk_{\Theta_1}$, a basis $\{ y_1, \dotsc, y_n \}$ for $\fg_{-1}$, and a basis $\{v_1,\ldots, v_l\}$ for $\sL_1$. Then $\{y_p \otimes c_i,\, y_p\otimes k_j \mid p \in \{1, \dotsc, n\},\, i\in \mathtt I,\, j\in \mathtt J\}$ is a basis of $\fg_{-1}[A]$, and the vectors
    \begin{equation}\label{eq:basis.K_A(L)}
    (y_1\otimes b_{i_1})^{\epsilon_1} \cdots (y_n\otimes b_{i_n})^{\epsilon_n} (y_1\otimes k_{j_1})^{\delta_1}\cdots (y_n\otimes k_{j_r})^{\delta_r}\otimes v_m,
    \end{equation}
where $i_1, \dotsc, i_n \in \mathtt I$, $j_1, \dotsc, j_n \in \mathtt J$, $\epsilon_1, \delta_1, \dotsc, \epsilon_n, \delta_n \in \{0,1\}$, $m \in \{1,\dotsc, l\}$ form a basis for $\bK_A (\sL_1)$. Similarly, the vectors $(y_1\otimes (b_{i_1}+\sk_{\Theta_1}))^{\epsilon_1}\cdots (y_n\otimes (b_{i_n}+\sk_{\Theta_1}))^{\epsilon_n} \otimes v_m$, where $i_1, \dotsc, i_n \in \mathtt I$, $\epsilon_1, \dotsc, \epsilon_n \in \{0,1\}$, $m \in \{1,\dotsc, l\}$, form a basis for $\bK_{A/\sk_{\Theta_1}} (\sL_1)$.  Thus, we conclude that a linear combination of vectors of the form \eqref{eq:basis.K_A(L)} lies in $\ker\psi$ if and only all monomials occurring in this combination are in $\Omega$.  This proves that $\ker \psi \subseteq \Omega$, and hence that $\ker \psi = \Omega$.
}

\ref{lem:W=oplus.b}:
If $\sk_{\Theta_1} = \sm$, then $\sL_1$ is an evaluation module and thus, it can be seen as a $\fg_0$-module.  In this case, the unique (up to scalar) surjective homomorphism of $\fg[A]$-modules $\varphi : \bK_A (\sL_1)\to \bV_\C (\sL_1)^\sm$ factors through $\psi$:
    \[
    \bK_A (\sL_1) \xrightarrow{\psi}
    \bK_{A/\sm} (\sL_1) \cong
    \bK_\C (\sL_1)^\sm \xrightarrow{\pi}
    \bV_\C (\sL_1)^\sm.
    \]
Thus, $\ker \varphi = \psi^{-1}(\ker \pi) = \psi^{-1}(Z_{\Theta_1 \boxtimes \sV_1}^\sm)$. Now, notice that $\bK_A (\sL_1) \cong \Omega \oplus \bK_\C (\sL_1)$ as vector spaces.  Hence, it follows from the definition of $\psi$ that $\psi^{-1}(Z_{\Theta_1 \boxtimes \sV_1}^\sm) = \Omega \oplus Z_{\Theta_1 \boxtimes \sV_1}^\sm$.  This implies that $\ker\varphi$, the maximal proper submodule of $\bK_A(\sL_1)$, is isomorphic to $\Omega \oplus Z_{\Theta_1 \boxtimes \sV_1}^\sm$.

\ref{lem:W=oplus.c}: 
Throughout the proof of this part, we will identify $\Omega \oplus (Z_{\Theta_1 \boxtimes \sV_1}^\sm)^{\delta_{\sm, \sk_{\Theta_1}}}$ with $W_1$, without further reference to the isomorphisms proved in parts \ref{lem:W=oplus.a} and \ref{lem:W=oplus.b}.  

We begin by showing that, if $\sk_{\Theta_2} \ne \sm$, then $\Hom_{\fg[A]} (Z_{\Theta_1 \boxtimes \sV_1}^\sm, \bV_A(\sL_2)) = 0$.  First, notice that there is an isomorphism of vector spaces
    \[
    \Hom_{\fg[A]} (Z_{\Theta_1 \boxtimes \sV_1}^\sm, \bV_A(\sL_2))
    \cong \Hom_{\fg} (Z_{\Theta_1 \boxtimes \sV_1}, \bV_A(\sL_2)^{\fg[\sm]}).
    \]
Then, notice that $\bV_A(\sL_2)^{\fg[\sm]}$ is a $\fg[A]$-submodule of $\bV_A(\sL_2)$.  Since $\sL_2 \subseteq \bV(\sL_2)$ and $\sk_{\Theta_2} \ne \sm$, then $\fg[\sm] \bV_A(\sL_2) \ne 0$, and hence $\bV_A(\sL_2)^{\fg[\sm]} \ne \bV_A(\sL_2)$.  Further, since $\bV_A(\sL_2)$ is irreducible by definition, then $\bV_A(\sL_2)^{\fg[\sm]} = 0$.  This implies that $\Hom_{\fg[A]} (Z_{\Theta_1 \boxtimes \sV_1}^\sm, \bV_A(\sL_2)) = 0$.

We resume the proof by showing that $\Hom_{\fg[A]}(\Omega, \bV_A(\sL_2)) \cong \Hom_{\fg_0[A]} (\fg_{-1}[\sk_{\Theta_1}] \otimes \sL_1, \sL_2)$.  First notice that, since $\Omega_0$ is a $\fg_+[A]$-submodule of $\bK_A(\sL_1)$ such that $\fg_1[A]\Omega_0=0$, then $\bK_A(\Omega_0)$ is well-defined.  Further, via the tensor-hom adjunction, the inclusion map $\Omega_0 \to \Omega$, induces a surjective homomorphism of $\fg[A]$-modules $\xi : \bK_A(\Omega_0) \to \Omega$, which satisfies $\xi(u \otimes \omega) = u\omega$ for all $u \in \bU(\fg[A])$, $\omega \in \Omega_0$. Thus, for every $f \in \Hom_{\fg[A]}(\Omega, \bV_A(\sL_2))$, we can define $(f\circ\xi)\in \Hom_{\fg[A]}(\bK_A (\Omega_0), \bV_A(\sL_2))$.  We claim that the map $\xi^* : \Hom_{\fg[A]}(\Omega, \bV_A(\sL_2)) \to \Hom_{\fg[A]}(\bK_A (\Omega_0), \bV_A(\sL_2))$, $\xi^*(f) = (f \circ \xi)$, is an isomorphism of vector spaces.  Since
    \[
    \Hom_{\fg[A]}(\bK_A (\Omega_0), \bV_A(\sL_2))
    \cong \Hom_{\fg_0[A]}(\Omega_0, \sL_2)
    = \Hom_{\fg_0[A]}(\fg_{-1}[\sk_{\Theta_1}] \otimes \sL_1, \sL_2)
    \]
by Lemma~\ref{lem:hom.ext.kac}\ref{lem:hom.ext.kac.b}, this finishes the proof.

To check that $\xi^*$ is a linear map is straight-forward, and the fact that $\xi^*$ is injective follows directly from the fact that $\xi$ is surjective.  In order to prove that $\xi^*$ is surjective, first notice that a map $h \in \Hom_{\fg[A]} (\bK_A (\Omega_0), \bV_A(\sL_2))$ factors through $\xi$ if and only if $\ker\xi \subseteq \ker h$. The isomorphisms of vector spaces 
    \[
    \bK_A(\Omega_0) 
    \cong \bLambda(\fg_{-1}[A])\otimes_\C \fg_{-1} [\sk_{\Theta_1}] \otimes_\C \sL_1
    \quad \textup{and} \quad
    \Omega
    \cong \bLambda(\fg_{-1}[A])\wedge \fg_{-1} [\sk_{\Theta_1}] \otimes_\C \sL_1
    \]
imply that $\ker\xi$ is the $\fg[A]$-submodule of $\bK_A(\Omega_0)$ generated by vectors $(y \otimes k) \otimes ((y \otimes k) \otimes v)$ such that $y \in \fg_{-1}$, $k\in \sk_{\Theta_1}$ and $v \in \sL_1$.  Thus, let $h \in \Hom_{\fg[A]}(\bK_A (\Omega_0), \bV_A(\sL_2))$ and $w_{y,k,v} := h((y \otimes k) \otimes ((y \otimes k) \otimes v))$.  In order to prove that $h$ factors through $\xi$, we will prove that $w_{y,k,v} = 0$ for all $y \in \fg_{-1}$, $k \in \sk_{\Theta_1}$, $v \in \sL_1$.  First notice that the super anticommutativity of $[\cdot, \cdot]$ and Jacobi identity imply that $[[x,y], y]=0$ for all $x\in \fg_1$, $y\in \fg_{-1}$. Thus, for all $x\in \fg_1$ and $a\in A$ we have that
    \[
    (x\otimes a)w_{y,k,v}
    = (x\otimes a)h((y \otimes k) \otimes ((y \otimes k) \otimes v))
    = h(([x,y], y] \otimes ak^2) \otimes v)=0.
    \]
Since $x \in \fg_1$ and $y \in \fg_{-1}$, then $[x,y] \in \fg_0$ and the super anticommutativity implies that $[[x,y], y] = - [y, [x,y]]$.  Further, by the super Jacobi identity and the fact that $[\fg_{-1}, \fg_{-1}] = 0$, we have that $-[y, [x,y]] = -[[y,x],y]$.  Then using the super anticommutativity again, we obtain that $-[[y,x],y] = -[[x,y],y]$.  This shows that $[[x,y], y] = - [[x,y], y]$.  This implies that $W_{y,k,v} := \bU(\fg[A])w_{y,k,v} = \bLambda(\fg_{-1}[A])\bU(\fg_0[A])w_{y,k,v}$ is a $\fg[A]$-submodule of $\bV_2(\sL_2)$.  Moreover, since $\bV_2(\sL_2)$ is an irreducible $\fg[A]$-module, then $W_{y,k,v}$ cannot be a proper submodule of $\bV_A(\sL_2)$.  Further, since $w_{y,k,v} = (y \otimes k) h((y \otimes k) \otimes v)\in \bLambda^+(\fg_{-1}[A])\sL_2$ , then $W_{y,k,v} \cap \sL_2 = \{0\}$.  This implies that $W_{y,k,v} = 0$, and hence that $w_{y,k,v} = 0$.  The result follows.
\end{proof}

We are now in a position to prove the second main result of the paper, which will be used to describe the block decomposition of the categories of finite-dimensional modules over $\fg[A]$ and $\hat\fg$ in Section~\ref{subsec:blk.dec}.  Compare it to \cite[Theorem~3.7]{NS15} and \cite[Theorem~5.7]{CM19}.

\begin{theo} \label{thm:ext.kac}
Let $\sV_1, \sV_2$ be finite-dimensional irreducible $\fg'_0$-modules, let $\sm$ be a maximal ideal of $A$, let $\Theta_1, \Theta_2 \in \fz[A]^*$, assume that $\Theta_1(\fz \otimes \sm^n) = \Theta_2(\fz \otimes \sm^n) = 0$ for some $n \in \Z_{>0}$ and that either $\Theta_1(\fz \otimes \sm) \ne 0$ or $\Theta_2(\fz \otimes \sm) \ne 0$.
\begin{enumerate}

\item \label{thm:ext.kac.b}
If $\Theta_1 \ne \Theta_2$ and $\Theta_1(z) - \Theta_2(z) \in \Z_{\ge0}$, then
    \[
    \Ext^1_{\fg[A]} \left( \bV_A(\Theta_1 \boxtimes \sV_1^\sm),\, \bV_A(\Theta_2 \boxtimes \sV_2^\sm) \right)
    \cong \Hom_{\fg_0[A]} (\fg_{-1}[\sk_{\Theta_1}] \otimes (\Theta_1 \boxtimes \sV_1^\sm),\, \Theta_2 \boxtimes \sV_2^\sm).
    \]

\item \label{thm:ext.kac.c}
If $\Theta_1 \ne \Theta_2$ and $\Theta_2(z) - \Theta_1(z) \in \Z_{\ge0}$, then
    \begin{align*}
    \Ext^1_{\fg[A]} \left( \bV_A(\Theta_1 \boxtimes \sV_1^\sm),\, \bV_A(\Theta_2 \boxtimes \sV_2^\sm) \right)
    \cong {}&{} \Hom_{\fg_0[A]} (\fg_{-1}[\sk_{\Theta_2}] \otimes (\Theta_2 \boxtimes \sV_2^\sm),\, \Theta_1 \boxtimes \sV_1^\sm).
    \end{align*}
    
\item \label{thm:ext.kac.a}
If $\Theta_1(z) - \Theta_2(z) \notin \Z$, then $\Ext^1_{\fg[A]} \left( \bV_A(\Theta_1 \boxtimes \sV_1^\sm), \bV_A(\Theta_2 \boxtimes \sV_2^\sm) \right) = 0$.

\item \label{thm:ext.kac.d}
In the case where $\Theta_1 = \Theta_2$, we have $\Ext^1_{\fg[A]} \left( \bV_A(\Theta_1 \boxtimes \sV_1^\sm), \, \bV_A(\Theta_2 \boxtimes \sV_2^\sm) \right) = 0$ if and only if $\Ext^1_{\fg_0[A]} \left( \Theta_1 \boxtimes \sV_1^\sm, \, \Theta_2 \boxtimes \sV_2^\sm \right) = 0$.
\end{enumerate}
\end{theo}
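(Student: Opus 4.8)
The plan is to apply the contravariant functor $\Hom_{\fg[A]}(-,\bV_A(\sL_2))$ to the short exact sequence $0 \to W_1 \to \bK_A(\sL_1) \to \bV_A(\sL_1) \to 0$ — where I abbreviate $\sL_i := \Theta_i \boxtimes \sV_i^\sm$ and write $W_1$ for the unique maximal $\fg[A]$-submodule of $\bK_A(\sL_1)$ — and to read off the statement from the resulting long exact sequence after identifying its low-degree terms. Three facts do this. First, $\Hom_{\fg[A]}(\bK_A(\sL_1),\bV_A(\sL_2)) \cong \Hom_{\fg_0[A]}(\sL_1,\sL_2)$ by Lemma~\ref{lem:hom.ext.kac}\ref{lem:hom.ext.kac.b}. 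Second, $\Hom_{\fg[A]}(W_1,\bV_A(\sL_2)) \cong \Hom_{\fg_0[A]}(\fg_{-1}[\sk_{\Theta_1}] \otimes \sL_1, \sL_2)$ by Lemma~\ref{lem:W=oplus}\ref{lem:W=oplus.c}, whose hypothesis ``$\sk_{\Theta_1}\neq\sm$ or $\sk_{\Theta_2}\neq\sm$'' is exactly our standing assumption, since $\Theta_i(\fz\otimes\sm)\neq 0$ if and only if $\sk_{\Theta_i}\neq\sm$. Third, whenever $\Theta_2(z)-\Theta_1(z)$ is not a positive integer we have $\Ext^1_{\fg[A]}(\bK_A(\sL_1),\bV_A(\sL_2)) \cong \Ext^1_{\fg_0[A]}(\sL_1,\sL_2)$ by Lemma~\ref{lem:hom.ext.kac}\ref{lem:hom.ext.kac.d}: its $z$-weight hypothesis holds because $z$ acts on $\fg_1$ by the scalar $1$, so every $z$-weight of $\Hom_\C(\fg_1[A],\bV_A(\sL_2))$ lies in $\Theta_2(z)-\Z_{>0}$, which misses the $z$-weight $\Theta_1(z)$ of $\sL_1$. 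Finally I would record that $\Ext^1_{\fg_0[A]}(\sL_1,\sL_2)=0$, as well as $\Hom_{\fg_0[A]}(\sL_1,\sL_2)=0$ and (by Lemma~\ref{lem:hom.ext.kac}\ref{lem:hom.ext.kac.a} together with Schur's lemma) $\Hom_{\fg[A]}(\bV_A(\sL_1),\bV_A(\sL_2))=0$, whenever $\Theta_1\neq\Theta_2$: picking $a_0 \in A$ with $\Theta_1(z\otimes a_0) \neq \Theta_2(z\otimes a_0)$, the central element $z\otimes a_0$ of $\fg_0[A]$ acts by these two distinct scalars on any extension of $\sL_1$ by $\sL_2$, hence is diagonalizable there with $\fg_0[A]$-stable eigenspaces, so the extension splits.

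With these identifications, parts~\ref{thm:ext.kac.b} and~\ref{thm:ext.kac.a} follow immediately. In both, $\Theta_1\neq\Theta_2$ and $\Theta_2(z)-\Theta_1(z)$ is not a positive integer, so the first and third groups above vanish and the long exact sequence gives $\Ext^1_{\fg[A]}(\bV_A(\sL_1),\bV_A(\sL_2)) \cong \Hom_{\fg_0[A]}(\fg_{-1}[\sk_{\Theta_1}] \otimes \sL_1, \sL_2)$. For part~\ref{thm:ext.kac.b} this is the asserted formula; for part~\ref{thm:ext.kac.a} one notes that $\fg_{-1}[\sk_{\Theta_1}] \otimes \sL_1$ has $z$-weight $\Theta_1(z)-1$ while $\sL_2$ has $z$-weight $\Theta_2(z)$, and $(\Theta_1(z)-1)-\Theta_2(z)\notin\Z$, so there is not even a nonzero $\fh$-module map and the $\Hom$-space vanishes. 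Part~\ref{thm:ext.kac.c} then follows from part~\ref{thm:ext.kac.b} by duality: by Lemma~\ref{lem:dual.ext}, $M\mapsto M^\vee$ is an exact contravariant functor fixing every finite-dimensional irreducible, so $\Ext^1_{\fg[A]}(\bV_A(\sL_1),\bV_A(\sL_2)) \cong \Ext^1_{\fg[A]}(\bV_A(\sL_2),\bV_A(\sL_1))$, and the hypotheses of part~\ref{thm:ext.kac.c} become those of part~\ref{thm:ext.kac.b} with the pairs $(\Theta_1,\sV_1)$ and $(\Theta_2,\sV_2)$ interchanged.

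It remains to prove part~\ref{thm:ext.kac.d}; put $\Theta := \Theta_1 = \Theta_2$. Since $\Theta_2(z)-\Theta_1(z)=0$ is not a positive integer, the third fact above still gives $\Ext^1_{\fg[A]}(\bK_A(\sL_1),\bV_A(\sL_2)) \cong \Ext^1_{\fg_0[A]}(\sL_1,\sL_2)$; and since $\fg_{-1}[\sk_\Theta] \otimes \sL_1$ has $z$-weight $\Theta(z)-1\neq\Theta(z)$, the second fact forces $\Hom_{\fg[A]}(W_1,\bV_A(\sL_2))=0$. Hence the long exact sequence collapses to
\[
0 \to \Ext^1_{\fg[A]}\bigl(\bV_A(\sL_1),\bV_A(\sL_2)\bigr) \xrightarrow{\alpha} \Ext^1_{\fg_0[A]}(\sL_1,\sL_2) \xrightarrow{\beta} \Ext^1_{\fg[A]}\bigl(W_1,\bV_A(\sL_2)\bigr),
\]
so that $\Ext^1_{\fg[A]}(\bV_A(\sL_1),\bV_A(\sL_2)) \cong \ker\beta$; this already gives the ``if'' direction. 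The ``only if'' direction is equivalent to the assertion $\beta = 0$, which I would prove as follows: given a nonsplit extension $0 \to \sL_2 \to \sM \to \sL_1 \to 0$ of $\fg_0[A]$-modules, regard $\sM$ as a $\fg_+[A]$-module via $\fg_1[A]\sM = 0$ (legitimate since $\fg_2=0$), apply the exact functor $\bK_A(-)$ to obtain $0 \to \bK_A(\sL_2) \to \bK_A(\sM) \to \bK_A(\sL_1) \to 0$, push this out along the canonical surjection $\bK_A(\sL_2)\twoheadrightarrow\bV_A(\sL_2)$ to get a $\fg[A]$-extension of $\bK_A(\sL_1)$ by $\bV_A(\sL_2)$ realizing the image of $[\sM]$, and then check that its restriction to $W_1 \subseteq \bK_A(\sL_1)$ splits — i.e. exhibit a $\fg[A]$-submodule of this extension mapping isomorphically onto $W_1$ — using the explicit description of $W_1$ from Lemma~\ref{lem:W=oplus} as $\bigoplus_{i\ge 0}\bLambda^i(\fg_{-1}[A])\wedge\fg_{-1}[\sk_\Theta]\otimes(\Theta\boxtimes\sV_1)$ and defining the splitting on PBW monomials. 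The mechanism making this possible is precisely the $z$-weight separation that already killed $\Hom_{\fg[A]}(W_1,\bV_A(\sL_2))$: it prevents the extension cocycle from coupling $W_1$ to the top $\fz$-weight subspace $\sL_2 \subseteq \bV_A(\sL_2)$. This verification — that a nonsplit $\fg_0[A]$-extension always lifts to a nonsplit $\fg[A]$-extension of the heads of the Kac-like modules — is the one genuinely delicate step, and is where I expect the main difficulty to lie.
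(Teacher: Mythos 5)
Your handling of parts~\ref{thm:ext.kac.b}, \ref{thm:ext.kac.c} and \ref{thm:ext.kac.a}, and of the \emph{if} direction of part~\ref{thm:ext.kac.d}, is correct and is essentially the paper's argument: the same long exact sequence attached to $0 \to W_1 \to \bK_A(\sL_1) \to \bV_A(\sL_1) \to 0$, the same identifications via Lemma~\ref{lem:hom.ext.kac}\ref{lem:hom.ext.kac.b}, Lemma~\ref{lem:W=oplus}\ref{lem:W=oplus.c} and Lemma~\ref{lem:hom.ext.kac}\ref{lem:hom.ext.kac.d}, and duality (Lemma~\ref{lem:dual.ext}) for part~\ref{thm:ext.kac.c}. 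Your explicit $z$-weight check of the hypothesis of Lemma~\ref{lem:hom.ext.kac}\ref{lem:hom.ext.kac.d} and your central-element proof that $\Ext^1_{\fg_0[A]}(\sL_1,\sL_2)=0$ when $\Theta_1\ne\Theta_2$ (where the paper cites \cite[Corollary~2.5]{NS15}) are both fine.

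The \emph{only if} direction of part~\ref{thm:ext.kac.d} is a genuine gap, and not only because you defer the ``delicate step.'' First, a logical slip: the only-if statement says that $\ker\beta=0$ forces the domain of $\beta$ to vanish; this follows from $\beta=0$ but is not equivalent to it. Second, and fatally, $\beta=0$ is false in general, so the verification you postpone cannot be carried out. Take $\Theta_1=\Theta_2=\Theta$ and $\sV_1\cong\sV_2$. Then $\Ext^1_{\fg_0[A]}(\sL_1,\sL_2)$ contains $\Ext^1_{\fz[A]}(\Theta,\Theta)\otimes\Hom_{\fg_0'[A]}(\sV_1^\sm,\sV_2^\sm)\cong\fz[A]^*$ (every $\phi\in\fz[A]^*$ is a $1$-cocycle of the abelian Lie algebra $\fz[A]$ with trivial coefficients, and only $\phi=0$ is a coboundary), which is infinite-dimensional whenever $\dim_\C A=\infty$. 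On the other hand, $\ker\beta\cong\Ext^1_{\fg[A]}(\bV_A(\sL_1),\bV_A(\sL_2))\cong\HH^1(\fg[A],\Hom_\C(\bV_A(\sL_1),\bV_A(\sL_2)))$ is finite-dimensional: the coefficient module is killed by $\fg[\kteta]$, so the Lyndon--Hochschild--Serre five-term sequence for $\fg[\kteta]\trianglelefteq\fg[A]$ bounds it by $\HH^1(\fg[A/\kteta],-)$ plus a subspace of $\Hom_\C(\fg\otimes(\kteta/\kteta^2),-)$, and both are finite-dimensional (use $[\fg\otimes\kteta,\fg\otimes\kteta]=\fg\otimes\kteta^2$ and $\sm^{2n}\subseteq\kteta^2$). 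Hence $\beta\ne0$: for most classes $[\sM]$ the restriction of your pushout extension to $W_1$ does \emph{not} split. The mechanism you invoke --- $\fz$-weight separation --- only kills $\Hom_{\fg[A]}(W_1,\bV_A(\sL_2))$; the obstruction to splitting the restricted extension lives in $\Ext^1_{\fg[A]}(W_1,\bV_A(\sL_2))$ and can couple $W_1$ to the lower $\fz$-weight subspaces of $\bV_A(\sL_2)$.

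The paper proves the only-if direction by a different route, which you would need to adopt or replace: run the Lyndon--Hochschild--Serre spectral sequence for $\fg[\sk_{\Theta_2}]\trianglelefteq\fg[A]$ to deduce $\Ext^1_{\fg[A/\kteta]}(\bV_A(\sL_1),\bV_A(\sL_2))=0$ from the vanishing over $\fg[A]$, identify this group with $\Ext^1_{\fg_0[A/\kteta]}(\sL_1,\sL_2)$ using $\bV_A(\sL_1)\cong\bK_{A/\kteta}(\sL_1)$ and Lemma~\ref{lem:hom.ext.kac}\ref{lem:hom.ext.kac.d} over $A/\kteta$, and then use the K\"unneth formula together with \cite[Corollary~2.5 and Theorem~3.7]{NS15} to see that $\Ext^1_{\fg_0[A/\kteta]}(\sL_1,\sL_2)$ and $\Ext^1_{\fg_0[A]}(\sL_1,\sL_2)$ vanish simultaneously, both being controlled by the two finite-dimensional spaces $\Hom_{\fg_0'}(\fg_0'\otimes\sV_1,\sV_2)$ and $\Hom_{\fg_0'}(\sV_1,\sV_2)$.
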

\begin{proof}
Throughout the proof we will denote $\Theta_1 \boxtimes \sV_1^\sm$ by $\sL_1$ and $\Theta_2 \boxtimes \sV_2^\sm$ by $\sL_2$. 

\ref{thm:ext.kac.b}: 
Consider the short exact sequence of $\fg[A]$-modules
    \[
    0 \to W_1 \to \bK_A (\sL_1) \to \bV_A (\sL_1) \to 0,
    \]
where $W_1$ is the unique maximal proper submodule of $\bK_A (\sL_1)$.  Applying $\Hom_{\fg[A]} (-, \bV_A(\sL_2))$ to this short exact sequence, we obtain a long exact sequence on $\Ext$.  We can extract from this long exact sequence the following exact sequence:
    \begin{multline*}
    \Hom_{\fg[A]} (\bK_{A} (\sL_1),\, \bV_A(\sL_2))
    \to \Hom_{\fg[A]} (W_1,\, \bV_A(\sL_2))\to \\
    \to \Ext_{\fg[A]}^1 (\bV_A(\sL_1),\, \bV_A(\sL_2)) 
    \to \Ext_{\fg[A]}^1 (\bK_{A} (\sL_1),\, \bV_A(\sL_2)).
    \end{multline*}
Now, notice that Lemma~\ref{lem:hom.ext.kac}\ref{lem:hom.ext.kac.b} implies that $\Hom_{\fg[A]} (\bK_A(\sL_1), \bV_A(\sL_2)) \cong \Hom_{\fg_0[A]} (\sL_1, \sL_2)$, and the fact that $\Theta_1 \ne \Theta_2$ implies that $\Hom_{\fg_0[A]} (\sL_1, \sL_2) = 0$.  Further, the fact that $\Theta_1(z) - \Theta_2(z) \in \Z_{\ge0}$ and Lemma~\ref{lem:hom.ext.kac}\ref{lem:hom.ext.kac.d} imply that $\Ext_{\fg[A]}^1 (\bK_A (\sL_1), \bV_A(\sL_2)) \cong \Ext_{\fg_0[A]}^1 (\sL_1, \sL_2)$, and \cite[Corollary~2.5]{NS15} and the fact that $\Theta_1 \ne \Theta_2$ imply that $\Ext_{\fg_0[A]}^1 (\sL_1, \sL_2) = 0$.  Hence, we have the following isomorphism of vector spaces:
    \[
    \Ext_{\fg[A]}^1 (\bV_A(\sL_1), \bV_A(\sL_2))
    \cong \Hom_{\fg[A]} (W_1, \bV_A(\sL_2)).
    \]
Since we are assuming that either $\sk_{\Theta_1} \ne \sm$ or $\sk_{\Theta_2} \ne \sm$, then Lemma~\ref{lem:W=oplus}\ref{lem:W=oplus.c} yields an isomorphism of vector spaces
    \[
    \Hom_{\fg[A]} (W_1, \bV_A(\sL_2))
    \cong \Hom_{\fg_0[A]} (\fg_{-1}[\sk_{\Theta_1}] \otimes \sL_1, \sL_2).
    \]

\ref{thm:ext.kac.c}: 
First recall from Lemma~\ref{lem:dual.ext} that $\Ext^1_{\fg[A]} (\bV_A(\sL_1), \bV_A(\sL_2)) \cong \Ext^1_{\fg[A]} (\bV_A(\sL_2), \bV_A(\sL_1))$.  Thus, the result of part~\ref{thm:ext.kac.c} follows from that of part~\ref{thm:ext.kac.b} by exchanging the indices $_1$ and $_2$.

The proof of part~\ref{thm:ext.kac.a} is similar to that of part~\ref{thm:ext.kac.b}.
\details{
Denote $\Theta_1 \boxtimes V_1^\sm$ by $L_1$ and $\Theta_2 \boxtimes V_2^\sm$ by $L_2$.  Consider the short exact sequence of $\fg[A]$-modules
    \[
    0 \to W_1 \to \bK_A (L_1) \to \bV_A (L_1) \to 0,
    \]
where $W_1$ is the unique maximal proper submodule of $\bK_A (L_1)$.  Applying $\Hom_{\fg[A]} (-, \bV_A(L_2))$ to this short exact sequence, we obtain a long exact sequence on $\Ext$.  We can extract from this long exact sequence the following exact sequence:
    \begin{multline*}
    \Hom_{\fg[A]} (\bK_{A} (L_1),\, \bV_A(L_2))
    \to \Hom_{\fg[A]} (W_1,\, \bV_A(L_2))\to \\
    \to \Ext_{\fg[A]}^1 (\bV_A(L_1),\, \bV_A(L_2)) 
    \to \Ext_{\fg[A]}^1 (\bK_{A} (L_1),\, \bV_A(L_2)).
    \end{multline*}
Now, notice that Lemma~\ref{lem:hom.ext.kac}\ref{lem:hom.ext.kac.b} implies that $\Hom_{\fg[A]} (\bK_A(L_1), \bV_A(L_2)) \cong \Hom_{\fg_0[A]} (L_1, L_2)$, and the fact that $\Theta_1 \ne \Theta_2$ implies that $\Hom_{\fg_0[A]} (L_1, L_2) = 0$.  Further, the fact that $\Theta_1(z) - \Theta_2(z) \notin \Z$ and Lemma~\ref{lem:hom.ext.kac}\ref{lem:hom.ext.kac.d} imply that $\Ext_{\fg[A]}^1 (\bK_A (L_1), \bV_A(L_2)) \cong \Ext_{\fg_0[A]}^1 (L_1, L_2)$, and \cite[Corollary~2.5]{NS15} and the fact that $\Theta_1 \ne \Theta_2$ imply that $\Ext_{\fg_0[A]}^1 (L_1, L_2) = 0$.  Hence, we have the following isomorphism of vector spaces:
    \[
    \Ext_{\fg[A]}^1 (\bV_A(L_1), \bV_A(L_2))
    \cong \Hom_{\fg[A]} (W_1, \bV_A(L_2)).
    \]
Since we are assuming that either $\sk_{\Theta_1} \ne \sm$ or $\sk_{\Theta_2} \ne \sm$, then by Lemma~\ref{lem:W=oplus}, we have an isomorphism of vector spaces
    \[
    \Hom_{\fg[A]} (W_1, \bV_A(L_2))
    \cong \Hom_{\fg_0[A]} (\fg_{-1}[\sk_{\Theta_1}] \otimes L_1, L_2).
    \]
Since $\Theta_1(z) - \Theta_2(z) \notin \Z$, then $\Hom_{\fh} (\fg_{-1}[\sk_{\Theta_1}] \otimes L_1, L_2) = 0$.  The result of part~\ref{thm:ext.kac.a} follows.
}

\ref{thm:ext.kac.d}: 
The proof of the \textit{if} part is similar to that of part~\ref{thm:ext.kac.b}.
\details{
Assume that $\Ext_{\fg_0[A]}^1 (\sL_1, \sL_2) = 0$, and consider the short exact sequence of $\fg[A]$-modules
    \[
0 \to W_1 \to \bK_A (\sL_1) \to \bV_A (\sL_1) \to 0,
    \]
where $W_1$ is the unique maximal proper submodule of $\bK_A (\sL_1)$.  Applying $\Hom_{\fg[A]} (-, \bV_A(\sL_2))$ to this short exact sequence, we obtain a long exact sequence on $\Ext$.  We can extract from this long exact sequence the following exact sequence:
    \[
\Hom_{\fg[A]} (W_1,\, \bV_A(\sL_2))
\to \Ext_{\fg[A]}^1 (\bV_A(\sL_1),\, \bV_A(\sL_2)) 
\to \Ext_{\fg[A]}^1 (\bK_{A} (\sL_1),\, \bV_A(\sL_2)).
    \]
Using the fact that $\Theta_1 = \Theta_2$, we obtain, from Lemma~\ref{lem:W=oplus}\ref{lem:W=oplus.c}, that $\Hom_{\fg[A]} (W_1, \bV_A(\sL_2))=0$, and from Lemma~\ref{lem:hom.ext.kac}\ref{lem:hom.ext.kac.d}, that $\Ext_{\fg[A]}^1 (\bK_{A} (\sL_1),\, \bV_A(\sL_2))\cong \Ext_{\fg_0[A]}^1 (\sL_1,\, \sL_2)$. Further, since $\Ext_{\fg_0[A]}^1 (\sL_1, \sL_2)$ is assumed to be $0$, then we conclude that $\Ext_{\fg[A]}^1 (\bV_A(\sL_1),\, \bV_A(\sL_2)) = 0$.
}
To prove the \textit{only if} part, assume that $\Ext_{\fg[A]}^1 (\bV_A(\sL_1), \bV_A(\sL_2)) = 0$, and consider the Lyndon-Hochschild-Serre spectral sequence 
\[
E_2^{p,q}
\cong \Ext^p_{\fg[A/\sk_{\Theta_2}]} (\bV_A(\sL_1), \, \HH^q(\fg[\sk_{\Theta_2}], \bV_A(\sL_2)))
\Rightarrow \Ext^{p+q}_{\fg[A]} (\bV_A(\sL_1), \bV_A(\sL_2)).
\]
From its 5-term exact sequence, we can extract the following exact sequence:
    \[
    0 \to 
    \Ext_{\fg[A/\sk_{\Theta_2}]}^1 (\bV_A(\sL_1),\, \bV_A(\sL_2)) \to
    \Ext_{\fg[A]}^1 (\bV_A(\sL_1),\, \bV_A(\sL_2)).
    \]
Since $\Ext_{\fg[A]}^1 (\bV_{A} (\sL_1), \bV_A(\sL_2))$ is assumed to be $0$, it follows that $\Ext_{\fg[A/\sk_{\Theta_2}]}^1 (\bV_{A} (\sL_1), \bV_A(\sL_2))=0$. Further, using the fact that $\bV_{A} (\sL_1)\cong \bK_{A/\sk_{\Theta_1}} (\sL_1)$, the hypothesis that $\Theta_1 = \Theta_2$, and Lemma~\ref{lem:hom.ext.kac}\ref{lem:hom.ext.kac.d} (with $A$ replaced by $A/\kteta$) we get that
    \[
    0
    = \Ext_{\fg[A/\sk_{\Theta_2}]}^1 (\bV_{A} (\sL_1),\, \bV_A(\sL_2))
    \cong \Ext_{\fg[A/\sk_{\Theta_1}]}^1 (\bK_{A/\sk_{\Theta_1}} (\sL_1),\, \bV_A(\sL_2))
    \cong \Ext_{\fg_0[A/\sk_{\Theta_1}]}^1 (\sL_1,\, \sL_2).
    \]
Now, using the fact that $\fg_0[A/\sk_{\Theta_1}] = \fz[A/\sk_{\Theta_1}] \oplus \fg'_0[A/\sk_{\Theta_1}]$ as Lie algebras, the K\"unneth formula, the fact that $\Theta_1 = \Theta_2$, \cite[Corollary~2.5 and Theorem~3.7]{NS15}, we obtain the following isomorphisms of vector spaces:
\begin{align*}
\Ext_{\fg_0[A/\sk_{\Theta_1}]}^1 (\sL_1,\, \sL_2)
\cong& \Hom_{\fz[A/\sk_{\Theta_1}]}(\Theta_1, \Theta_2) \otimes \Ext_{\fg_0'[A/\sk_{\Theta_1}]}^1 (\sV_1^\sm,\, \sV_2^\sm) \nonumber \\
    & \oplus \Ext^1_{\fz[A/\sk_{\Theta_1}]}(\Theta_1, \Theta_2) \otimes \Hom_{\fg_0'[A/\sk_{\Theta_1}]}(\sV_1^\sm, \sV_2^\sm) \nonumber \\
\cong&{\ } \left( \C \otimes \Hom_{\fg_0'} (\fg_0' \otimes \sV_1^\sm, \sV_2^\sm)^{\dim_{A/\sm} \sm/\sm^2} \right) \oplus \left( \fz[A/\sk_{\Theta_1}]^* \otimes \Hom_{\fg_0'}(\sV_1, \sV_2) \right).
\end{align*}
Since $\Ext_{\fg_0[A/\sk_{\Theta_1}]}^1 (\sL_1,\, \sL_2) = 0$, it follows that $\Hom_{\fg_0'} (\fg_0' \otimes \sV_1^\sm, \sV_2^\sm) = \Hom_{\fg_0'} (\sV_1, \sV_2) = 0$.  Hence, it follows from the above isomorphism (with $A/\sk_{\Theta_1}$ replaced by $A$) that $\Ext_{\fg_0[A]}^1 (\sL_1,\, \sL_2) = 0$.
\end{proof}

\begin{rem}
Let $\sm \subseteq A$ be a maximal ideal, $\sV_1, \sV_2$ be finite-dimensional irreducible $\fg'_0$-modules, and $\Theta_1, \Theta_2 \in \fz[A]^*$ be such that $\sk_{\Theta_1} = \sk_{\Theta_2} = \sm$.  It has been proven in \cite[Theorem~5.7(b)]{CM19} that there is an isomorphism of vector spaces
    \begin{align*}
    \Ext^1_{\fg[A]} \left( \bV_A(\Theta_1 \boxtimes \sV_1^\sm),\, \bV_A(\Theta_2 \boxtimes \sV_2^\sm) \right)
    \cong {}&{} \Ext^1_{\fg} (\bV_\C (\Theta_1 \boxtimes \sV_1^\sm),\, \bV_\C (\Theta_2 \boxtimes \sV_2^\sm)) \\
        {}&{}\oplus \Hom_{\fg} (\fg \otimes \bV_\C (\Theta_1 \boxtimes \sV_1^\sm),\, \bV_\C (\Theta_2 \boxtimes \sV_2^\sm))^{\dim_{A/\sm} (\sm/\sm^2)}.
    \end{align*}
\end{rem}

The following result implies that $\fg[A]$ is \emph{extension-local}, as defined in \cite[Definition~5.13]{NS15}.

\begin{cor} \label{cor:gA.ext-loc}
If $V$ is a finite-dimensional irreducible evaluation $\fg[A]$-module and $\C_\lambda$ is a 1-dimen\-sion\-al irreducible generalized evaluation $\fg[A]$-module, then $\Ext^1_{\fg[A]}(V, \C_\lambda) = 0$.
\end{cor}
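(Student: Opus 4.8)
The plan is to reduce the assertion to a single maximal ideal and then to vanishing statements inside the reductive Lie algebra $\fg$, using \cite[Theorem~5.7]{CM19} together with the known description of extensions between evaluation modules.

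First, I would observe that $\C_\lambda$ is the trivial module. Indeed, $\fg$ is simple when $\fg\cong\fsl(m|n)$ with $m\ne n$ or $\fg\cong\fosp(2|2n)$, and $[\fg,\fg]=\fg$ when $\fg\cong\fsl(n|n)$; in either case $\fg[A]=\fg\otimes A$ is perfect, so any $1$-dimensional $\fg[A]$-module is acted on trivially, whence $\C_\lambda\cong\C$ and it suffices to prove $\Ext^1_{\fg[A]}(V,\C)=0$.

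Next, write $V\cong V_1^{\sm_1}\otimes\dotsm\otimes V_l^{\sm_l}$ with the $V_i$ finite-dimensional irreducible $\fg$-modules and the $\sm_i$ distinct maximal ideals. By \cite[Theorem~5.7]{CM19}, $\Ext^1_{\fg[A]}(V,\C)$ decomposes as a direct sum of contributions, one for each $\sm_i$, each computed from extensions supported at that single ideal; the contribution at $\sm_i$ vanishes unless every other factor $V_j$ ($j\ne i$) is trivial. Hence $\Ext^1_{\fg[A]}(V,\C)=0$ automatically when at least two of the $V_j$ are nontrivial, and when all are trivial we have $V\cong\C$ and $\Ext^1_{\fg[A]}(\C,\C)\cong\HH^1(\fg[A],\C)\cong(\fg[A]/[\fg[A],\fg[A]])^{*}=0$ by perfectness. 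This leaves the case $V\cong U^{\sm}$ for a single nontrivial finite-dimensional irreducible $\fg$-module $U$ and a single maximal ideal $\sm$.

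Finally, in that case both $U^{\sm}$ and $\C=\C^{\sm}$ are evaluation modules supported at $\sm$, so by \cite[Example~5.8]{CM19} --- equivalently, by applying the Lyndon--Hochschild--Serre spectral sequence to the ideal $\fg[\sm]\trianglelefteq\fg[A]$ (with quotient $\fg[A]/\fg[\sm]\cong\fg$), in the manner of the Remark following Theorem~\ref{thm:ext.kac} --- the space $\Ext^1_{\fg[A]}(U^{\sm},\C)$ is assembled from $\Ext^1_{\fg}(U,\C)$ and from $\Hom_{\fg}(\fg\otimes U,\C)^{\oplus\dim_{A/\sm}(\sm/\sm^2)}$. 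I would close the argument by proving that both of these vanish for the $\fg$-module $U$ occurring here: $\Ext^1_{\fg}(U,\C)$ vanishes because the nontrivial irreducible $U$ lies in a block of the category of finite-dimensional $\fg$-modules distinct from that of $\C$, and $\Hom_{\fg}(\fg\otimes U,\C)$ vanishes by a weight count against the central torus $\fz\subseteq\fg_0$, using the consistent $\Z$-grading $\fg=\fg_{-1}\oplus\fg_0\oplus\fg_1$. The second of these is the main obstacle: it is precisely here that the special structure of the type~I superalgebra $\fg$ must be exploited, and it is this vanishing that yields extension-locality of $\fg[A]$ in the sense of \cite[Definition~5.13]{NS15}.
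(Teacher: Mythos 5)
The proposal goes wrong at its very first step, and the error is fatal rather than cosmetic. You read ``$\C_\lambda$'' literally as a one-dimensional $\fg[A]$-module and conclude from perfectness of $\fg[A]$ that $\C_\lambda$ is trivial. But in this corollary $\C_\lambda$ stands for an irreducible module of the form $\bV_A(\Theta_2\boxtimes\C^\sm)$ with $\Theta_2(\fz\otimes\sm)\ne 0$, i.e.\ a module induced from a one-dimensional $\fg_0[A]$-module whose central character is \emph{genuinely} generalized; such a module has dimension $2^{\dim\fg_{-1}\cdot\dim(A/\sk_{\Theta_2})}$ and is very far from trivial. Under your literal reading the hypothesis ``generalized evaluation'' becomes vacuous (a one-dimensional module over a perfect algebra is trivial, hence an evaluation module), and the statement you actually set out to prove --- $\Ext^1_{\fg[A]}(V,\C)=0$ for every irreducible evaluation module $V$ --- is \emph{false}: taking $V=\fg^\sm$ (the adjoint evaluation module) with $\sm/\sm^2\ne 0$, the contribution $\Hom_{\fg}(\fg\otimes\fg,\C)^{\dim_{A/\sm}(\sm/\sm^2)}$ is nonzero. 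The two vanishing claims you end with are also incorrect on their own terms: for type~I superalgebras the trivial module is atypical and its block contains nontrivial irreducibles, so $\Ext^1_{\fg}(U,\C)$ need not vanish for nontrivial irreducible $U$; and $\Hom_{\fg}(\fg\otimes U,\C)\cong\Hom_{\fg}(U,\fg^*)$ is nonzero for $U\cong\fg$, which no $\fz$-weight count rules out.

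The actual argument required here is entirely different. One first reduces, via \cite[Theorem~5.7]{CM19}, to the case where $V$ and $\C_\lambda=\bV_A(\Theta_2\boxtimes\C^\sm)$ are supported at the same maximal ideal $\sm$, with $\Theta_1(\fz\otimes\sm)=0$ (since $V$ is an evaluation module) and $\Theta_2(\fz\otimes\sm)\ne0$. Theorem~\ref{thm:ext.kac} then identifies $\Ext^1_{\fg[A]}(V,\C_\lambda)$ with a space of $\fg_0[A]$-homomorphisms out of $\fg_{-1}[\sk_{\Theta_2}]$ into an irreducible \emph{generalized} evaluation $\fg_0[A]$-module. The key point you are missing is the vanishing of that $\Hom$-space: filtering $\fg_{-1}[\sk_{\Theta_2}]$ by powers of $\sm$ shows every composition factor (hence the socle) is a copy of the evaluation module $\fg_{-1}^\sm$, whereas the target, being an irreducible non-evaluation module, contains no such copy. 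That socle argument, not perfectness or a block separation inside $\fg$, is what makes $\fg[A]$ extension-local.
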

\begin{proof}
First let $V$ be an evaluation module evaluated at a maximal ideal $\sm_1 \subseteq A$ and $\C_\lambda$ be a generalized evaluation module evaluated at a maximal ideal $\sm_2 \subseteq A$.  By \cite[Theorem~5.7(a)]{CM19}, if $\sm_1 \ne \sm_2$, then $\Ext^1_{\fg[A]} (V, \C_\lambda) = 0$.  Now, assume that $\sm_1 = \sm_2$, denote $\sm := \sm_1 = \sm_2$, let $\Theta_1 \in \fh[A]^*$ and $\sV_1$ be a finite-dimensional irreducible $\fg$-module such that $V = \bV_A(\Theta_1 \boxtimes \sV^\sm)$, and let $\Theta_2 \in \fh[A]^*$ be such that $\C_\lambda = \bV_A(\Theta_2 \boxtimes \C^\sm)$.

If $\Theta_1(z)-\Theta_2(z) \notin \Z$, then $\Ext^1_{\fg[A]}(V, \C_\lambda) = 0$ by Theorem~\ref{thm:ext.kac}\ref{thm:ext.kac.a}.  Else, if $\Theta_1(z)-\Theta_2(z) \in \Z_{\ge0}$, then, by Theorem~\ref{thm:ext.kac}\ref{thm:ext.kac.b}, we have:
\[
\Ext^1_{\fg[A]} (V, \C_\lambda)
\cong \Hom_{\fg_0[A]} (\fg_{-1}[\sk_{\Theta_2}] \otimes (\Theta_2 \boxtimes \C^\sm), \Theta_1 \boxtimes \sV_1^\sm).
\]
Further, using the tensor-hom adjunction and the fact that $\Theta_2^* = - \Theta_2$, we see that
\[
\Hom_{\fg_0[A]} (\fg_{-1}[\sk_{\Theta_2}] \otimes (\Theta_2 \boxtimes \C^\sm), \Theta_1 \boxtimes \sV_1^\sm) \cong \Hom_{\fg_0[A]} (\fg_{-1}[\sk_{\Theta_2}], (\Theta_1-\Theta_2) \boxtimes \sV_1^\sm).
\]
To finish the proof of this case, we will show that, if $\Hom_{\fg_0[A]} (\fg_{-1}[\sk_{\Theta_2}], (\Theta_1-\Theta_2) \boxtimes \sV_1^\sm) \ne 0$, then $(\Theta_1-\Theta_2) \boxtimes \sV_1^\sm$ contains a copy of the evaluation $\fg_0[A]$-module $\fg_{-1}^\sm$.  In fact, consider a filtration of $\fg_{-1}[\sk_{\Theta_2}]$ induced by powers of $\sm$: $\fg_{-1}[\sk_{\Theta_2}] \supsetneq \fg_{-1}[\sk_{\Theta_2} \cap \sm] \supsetneq \fg_{-1}[\sk_{\Theta_2} \cap \sm^2] \supsetneq \dotsb$.  Since $\fg_{-1}[\sk_{\Theta_2} \cap \sm^i] / \fg_{-1}[\sk_{\Theta_2} \cap \sm^{i+1}]$ is a direct sum of copies of $\fg_{-1}^\sm$, then every irreducible factor of $\fg_{-1}[\sk_{\Theta_2}]$ must be isomorphic to $\fg_{-1}^\sm$, and hence, the socle of $\fg_{-1}[\sk_{\Theta_2}]$ must be isomorphic to $(\fg_{-1}^\sm)^{\oplus p}$ for some $p > 0$.  Thus, if there exists a non-zero $f \in \Hom_{\fg_0[A]} (\fg_{-1}[\sk_{\Theta_2}], (\Theta_1-\Theta_2) \boxtimes \sV_1^\sm)$, then the image of the socle of $\fg_{-1}[\sk_{\Theta_2}]$ by $f$ must contain at least one copy of $\fg_{-1}^\sm$.  Since $\fg_{-1}^\sm$ is an evaluation module and $(\Theta_1-\Theta_2) \boxtimes \sV_1^\sm$ is an irreducible generalized evaluation module, then $(\Theta_1-\Theta_2) \boxtimes \sV_1^\sm$ cannot contain any copy of $\fg_{-1}^\sm$.  This ultimately implies that $\Ext^1_{\fg[A]} (V, \C_\lambda) = 0$.  

The case where $\Theta_2(z)-\Theta_1(z) \in \Z_{\ge0}$ follows from Theorem~\ref{thm:ext.kac}\ref{thm:ext.kac.c} and an argument very similar to the one used in the paragraph above.
\end{proof}

\begin{rem}
In the case where $A = \C[t]$, $\sm = (t)$, $\Theta_1 \boxtimes \sV_1^\sm = \C$ (trivial $\fg_0[A]$-module), $\Theta_2(z) = \Theta_2(z \otimes t) \ne 0$, $\Theta_2(z \otimes t^n) = 0$ for all $n \ge 2$, and $\sV_2 = \C$ (trivial $\fg_0'$-module), then Theorem~\ref{thm:ext.kac}\ref{thm:ext.kac.b}-\ref{thm:ext.kac.a} imply that $\Ext^1_{\fg[A]} (\C, \bV_A(\Theta_2 \boxtimes \C^\sm)) = 0$.
\details{
If $\Theta_2(z) \notin \Z$, then it follows directly from Theorem~\ref{thm:ext.kac}\ref{thm:ext.kac.a} that $\Ext^1_{\fg[A]} (\C, \bV_A(\Theta_2 \boxtimes \C^\sm)) = 0$.  If $\Theta_2(z) \in \Z_{<0}$, then it follows from Theorem~\ref{thm:ext.kac}\ref{thm:ext.kac.b} that
    \[
    \Ext^1_{\fg[A]} (\C, \bV_A(\Theta_2 \boxtimes \C^\sm))
    \cong \Hom_{\fg_0[A]} (\fg_{-1}[\sm], \Theta_2 \boxtimes \C^\sm).
    \]
Since, as a $\fg_0'$-module, $\fg_{-1}$ is irreducible and not isomorphic to $\C$, then $\Hom_{\fg_0'} (\fg_{-1}[\sm], \Theta_2 \boxtimes \C^\sm) = 0$.  Thus, $\Hom_{\fg_0[A]} (\fg_{-1}[\sm], \Theta_2 \boxtimes \C^\sm) = 0$.  If $\Theta_2(z) \in \Z_{>0}$, then it follows from Theorem~\ref{thm:ext.kac}\ref{thm:ext.kac.c} that
    \[
    \Ext^1_{\fg[A]} (\C, \bV_A(\Theta_2 \boxtimes \C^\sm))
    \cong \Hom_{\fg_0[A]} (\fg_{-1}[\sk_{\Theta_2}] \otimes (\Theta_2 \boxtimes \C^\sm), \C).
    \]
Since $\Hom_{\fg_0[A]} (\fg_{-1}[\sk_{\Theta_2}] \otimes (\Theta_2 \boxtimes \C^\sm), \C) \cong \Hom_{\fg_0[A]} (\fg_{-1}[\sk_{\Theta_2}], (\Theta_2 \boxtimes \C^\sm)^*)$, $(\Theta_2 \boxtimes \C^\sm)^* \cong \Theta_2^* \boxtimes \C^\sm$, and as a $\fg_0'$-module, $\fg_{-1}$ is irreducible and not isomorphic to $\C$, then $\Hom_{\fg_0'} (\fg_{-1}[\sk_{\Theta_2}], \Theta_2^* \boxtimes \C^\sm) = 0$.  The claim follows.
}
This shows that some of the calculations regarding the kernel of the transgression map \cite[Example~5.9]{CM19} are incorrect.
\end{rem}

%%%%%%%%%%%%%%%%%%%%%%%%%%
\section{Applications}  %%
%%%%%%%%%%%%%%%%%%%%%%%%%%
\label{sec:apps}

In this section, assume that $\fg$ is a classical Lie superalgebra and $A$ is an associative, commutative, finitely-generated, unital algebra, both defined over $\C$.

%%%%%%%%%%%
\subsection{Blocks} \label{subsec:blk.dec}

In this subsection, we will describe the block decomposition of the categories of finite-dimensional modules for classical map and affine superalgebras.

Denote by $\mathcal F$ the category of finite-dimensional $\fg[A]$-modules.  Then, consider the smallest equivalence relation on the set of finite-dimensional irreducible $\fg[A]$-modules such that $V_1$ is equivalent to $V_2$ if, either $V_1$ is isomorphic to $V_2$, or $\Ext^1_{\fg[A]}(V_1, V_2) \ne 0$. The equivalence class of an irreducible module $V\in \mathcal F$ under this equivalence relation will be denoted by $[V]$, and will be called the block of $\mathcal F$ corresponding to $[V]$. Let $\mathtt B$ be the set of all blocks of $\mathcal F$.

Recall that every object in $\mathcal F$ has a Jordan-Hölder series.  For each $\mathtt b \in \mathtt B$, denote by $\mathcal F_{\mathtt b}$ the full subcategory of $\mathcal F$ whose objects are the finite-dimensional $\fg[A]$-modules whose composition factors lie entirely in $\mathtt b$.  Further, for each $M$ in $\mathcal F$ and $\mathtt b \in \mathtt B$, denote by $M_{\mathtt b}$ the sum of submodules of $M$ which lie in $\mathcal F_{\mathtt b}$.  One can show that $M = \bigoplus_{\mathtt b \in \mathtt B} M_{\mathtt b}$ and $\Hom_{\fg[A]}(M, M') \cong \bigoplus_{\mathtt b} \Hom_{\mathcal F_{\mathtt b}} (M_{\mathtt b}, M'_{\mathtt b})$ (cf. \cite[Lemma~5.5]{NS15}).  Hence, $\mathcal F = \bigoplus_{\mathtt b \in \mathtt B} \mathcal F_{\mathtt b}$ is said to be the \emph{block decomposition} of $\mathcal F$ (cf. \cite[Corollary~5.6]{NS15}).

Notice that, to describe the blocks $\mathcal F_{\mathtt b}$, we only need to determine which irreducible modules lie in which block.  In order to describe a given block, for each $\sm \in \MaxSpec(A)$, denote by $\mathcal F_{\sm}$ the full subcategory of $\mathcal F$ whose objects are finite-dimensional $\fg[A]$-modules whose composition factors are (generalized) evaluation modules evaluated at $\sm$.  As in the paragraph above, denote the blocks of the category $\mathcal F_\sm$ by $\mathtt B_{\sm}$.  Then, for each finite-dimensional irreducible $\fg[A]$-module $V = V_1^{\sm_1^{p_1}} \otimes \dotsm \otimes V_k^{\sm_k^{p_k}}$, define its \emph{spectral character} to be the function
\[
\chi_V : \MaxSpec(A) \to \bigsqcup_{\sm \in \MaxSpec(A)} \mathtt B_{\sm}
\quad \textup{given by} \quad
\chi_V(\sm) = 
\begin{cases}
\left[ V_i^{\sm_i^{p_i}} \right], & \textup{if } \sm = \sm_i, \ i \in \{1, \dotsc, k\}, \\
\left[ \C^\sm \right], & \textup{if } \sm \notin \{\sm_1, \dotsc, \sm_k\}.
\end{cases}
\]

The next result shows that two finite-dimensional irreducible $\fg[A]$-modules lie in the same block if and only if their spectral characters are the same.

\begin{prop} \label{prop:blk.map}
Let $V_1, V_2$ be finite-dimensional irreducible $\fg[A]$-modules.  The spectral characters $\chi_{V_1}$ and $\chi_{V_2}$ are the same if and only if $V_1$ and $V_2$ lie in the same block of the category of finite-dimensional $\fg[A]$-modules.
\end{prop}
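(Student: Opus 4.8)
The plan is to show that the spectral character is a complete invariant for the blocks of $\mathcal F$. Since the block relation on irreducibles is generated by isomorphism together with non-vanishing of $\Ext^1_{\fg[A]}$, and since $\chi_V$ depends only on the isomorphism class of $V$, the \emph{only if} direction reduces to the single implication $\Ext^1_{\fg[A]}(V_1,V_2)\neq 0 \Rightarrow \chi_{V_1}=\chi_{V_2}$. For the \emph{if} direction one must, starting from $\chi_{V_1}=\chi_{V_2}$, produce a finite chain of finite-dimensional irreducible $\fg[A]$-modules joining $V_1$ to $V_2$ in which consecutive terms are either isomorphic or linked by a non-zero $\Ext^1_{\fg[A]}$ in one of the two orders.

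Both directions rest on the locality of extensions coming from \cite[Theorem~5.7]{CM19}. Write $V_1\cong\bigotimes_{i=1}^{k}A_i^{\sm_i^{p_i}}$ and $V_2\cong\bigotimes_{i=1}^{k}B_i^{\sm_i^{q_i}}$ over a common finite list $\sm_1,\dots,\sm_k$ of distinct maximal ideals, inserting trivial factors $\C^{\sm_i}$ where a module is unsupported at $\sm_i$; here the $A_i^{\sm_i^{p_i}},B_i^{\sm_i^{q_i}}$ are irreducible generalized evaluation modules at $\sm_i$. Then \cite[Theorem~5.7]{CM19} gives
\[
\Ext^1_{\fg[A]}(V_1,V_2)\;\cong\;\bigoplus_{j=1}^{k}\Ext^1_{\fg[A]}\bigl(A_j^{\sm_j^{p_j}},B_j^{\sm_j^{q_j}}\bigr)\otimes\!\!\bigotimes_{i\neq j}\Hom_{\fg[A]}\bigl(A_i^{\sm_i^{p_i}},B_i^{\sm_i^{q_i}}\bigr),
\]
and each $\Hom_{\fg[A]}$ between the irreducibles is $0$ or $\C$. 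Hence $\Ext^1_{\fg[A]}(V_1,V_2)\neq 0$ forces an index $j$ with $A_i^{\sm_i^{p_i}}\cong B_i^{\sm_i^{q_i}}$ for all $i\neq j$ and $\Ext^1_{\fg[A]}(A_j^{\sm_j^{p_j}},B_j^{\sm_j^{q_j}})\neq 0$. For $i\neq j$ the isomorphisms give $\chi_{V_1}(\sm_i)=\chi_{V_2}(\sm_i)$, and the same holds trivially at every maximal ideal outside $\{\sm_1,\dots,\sm_k\}$; at $\sm_j$ the surviving non-zero $\Ext^1$ says precisely that the $\sm_j$-components of $V_1$ and $V_2$ lie in one block of $\mathcal F_{\sm_j}$, i.e.\ $\chi_{V_1}(\sm_j)=\chi_{V_2}(\sm_j)$. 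Thus $\chi_{V_1}=\chi_{V_2}$. That this padded description is legitimate — in particular that $[\C^\sm]$ is the correct value of $\chi$ at a point outside the support and is not "accidentally" linked there to evaluation modules — is exactly the extension-locality of $\fg[A]$ recorded in Corollary~\ref{cor:gA.ext-loc}.

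For the \emph{if} direction, assume $\chi_{V_1}=\chi_{V_2}$ and use the same common description. It suffices to replace the component at one maximal ideal $\sm_j$ at a time. Since $\chi_{V_1}(\sm_j)=\chi_{V_2}(\sm_j)$, the modules $A_j^{\sm_j^{p_j}}$ and $B_j^{\sm_j^{q_j}}$ lie in the same block $\mathtt b$ of $\mathcal F_{\sm_j}$, so there is a chain $A_j^{\sm_j^{p_j}}=W_0,W_1,\dots,W_r=B_j^{\sm_j^{q_j}}$ of irreducible generalized evaluation modules at $\sm_j$ with, for each $s$, $W_s\cong W_{s+1}$ or $\Ext^1_{\fg[A]}(W_s,W_{s+1})\neq 0$ or $\Ext^1_{\fg[A]}(W_{s+1},W_s)\neq 0$. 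Let $N$ denote the tensor product of the components at the remaining maximal ideals at the current stage — a fixed irreducible module supported off $\sm_j$, the same at both ends of this step. Tensoring the chain with $N$ and applying the displayed decomposition once more, in each non-trivial step the factor $\Hom_{\fg[A]}(N,N)=\C$ makes $\Ext^1_{\fg[A]}(W_s\otimes N,W_{s+1}\otimes N)$ (or its transpose) non-zero, so $W_s\otimes N$ and $W_{s+1}\otimes N$ are block-equivalent in $\mathcal F$. Concatenating the chains obtained for $j=1,\dots,k$, replacing at the $j$-th stage only the $\sm_j$-factor, links $V_1$ to $V_2$.

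The work is organizational rather than conceptual: the genuine points to watch are (i) that the support of a finite-dimensional irreducible $\fg[A]$-module is finite, so the common list $\sm_1,\dots,\sm_k$ exists and all but finitely many values of $\chi$ equal the base value $[\C^\sm]$; and (ii) that the local categories $\mathcal F_\sm$ now contain genuinely generalized evaluation modules, so the blocks $\mathtt B_\sm$ are not merely blocks of $\fg$-modules as in \cite{NS15} — but both are already subsumed by Theorem~\ref{thm:ext.kac} and Corollary~\ref{cor:gA.ext-loc}, and the bookkeeping then proceeds exactly as in \cite[Lemma~5.5 and Corollary~5.6]{NS15}. The step most likely to need care in a full write-up is verifying that the displayed $\Ext^1$ decomposition is applicable to the tensored chain in the \emph{if} direction, i.e.\ that each $W_s\otimes N$ is again an irreducible generalized evaluation module to which \cite[Theorem~5.7]{CM19} applies.
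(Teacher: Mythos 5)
Your argument is correct and is essentially the paper's own proof written out in full: the paper simply cites the block-decomposition argument of \cite[Lemma~5.18]{NS15}, adapted by substituting the extension-locality results of \cite{CM19} for those of \cite{NS15}, which is exactly the tensor-product $\Ext^1$ decomposition plus chain construction you carry out (your ``if''/``only if'' labels are swapped relative to the statement, but both implications are handled). No gap.
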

\begin{proof}
This proof follows from the arguments used in \cite[Lemma~5.18]{NS15}, by using \cite[Proposition~5.5]{CM19} instead of \cite[Theorem~3.7]{NS15}.
\end{proof}

Now that we have described the block decomposition of the category of finite dimensional $\fg[A]$-modules, we turn our attention to the affine Lie superalgebra $\hat \fg = \fg[\C[t, t^{-1}]] \oplus \C c$ (see equation~\ref{eq:aff.sup}). Recall that $c$ acts trivially on every finite-dimensional irreducible $\hat \fg$-module.  Hence, the classifications of finite-dimensional irreducible $\fg[\C[t, t^{-1}]]$- and $\hat\fg$-modules are essentially (that is, bar an inflation) the same. The following result shows that the block decomposition of the category of finite-dimensional modules over $\hat \fg$ is determined by the block decomposition of the category of finite-dimensional modules over $\fg[\C[t, t^{-1}]]$.

\begin{prop} \label{prop:blk.aff}
Let $M$ and $N$ be irreducible finite-dimensional $\hat \fg$-modules.  If $M \not\cong N$, then $\Ext^1_{\hat\fg}(M,N) \cong \Ext^1_{\fg[\C[t, t^{-1}]]}(M,N)$.  In particular, $M$ and $N$ are in the same block in the category of finite-dimensional $\hat\fg$-modules if and only if they are in the same block in the category of finite-dimensional $\fg[\C[t, t^{-1}]]$-modules.
\end{prop}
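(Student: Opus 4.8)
The plan is to realize $\hat\fg$ as the central extension $0 \to \C c \to \hat\fg \to \fg[\C[t,t^{-1}]] \to 0$ and to prove that, for non-isomorphic finite-dimensional irreducibles $M \not\cong N$, inflation along the projection $\hat\fg \twoheadrightarrow \fg[\C[t,t^{-1}]]$ induces the claimed isomorphism on $\Ext^1$. First I would record the two facts that make the reduction work: by the discussion in the Introduction, $c$ annihilates every finite-dimensional irreducible $\hat\fg$-module, so $c$ acts as $0$ on $M$, on $N$, and hence on $\Hom_\C(M,N)$; and inflation/restriction gives a bijection between finite-dimensional irreducible $\hat\fg$-modules and finite-dimensional irreducible $\fg[\C[t,t^{-1}]]$-modules. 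Granting the $\Ext^1$ isomorphism, the ``in particular'' clause is then formal: the equivalence relation generating the blocks depends only on which pairs of non-isomorphic irreducibles have nonvanishing $\Ext^1$, and this datum is the same in both categories, so the two block decompositions coincide.

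For injectivity of the inflation map $\Ext^1_{\fg[\C[t,t^{-1}]]}(M,N) \to \Ext^1_{\hat\fg}(M,N)$, I would take a short exact sequence $0 \to N \to E \to M \to 0$ of $\fg[\C[t,t^{-1}]]$-modules whose inflation to $\hat\fg$ splits. The $\hat\fg$-linear splitting $M \to E$ is automatically $\fg[\C[t,t^{-1}]]$-linear, because a $\C$-linear map between modules on which $c$ acts as $0$ commutes with the $\hat\fg$-action if and only if it commutes with the $\fg[\C[t,t^{-1}]]$-action. Hence the original sequence already splits.

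The crux is surjectivity, for which the key claim is that $c$ acts as $0$ on the middle term of \emph{every} $\hat\fg$-extension of $M$ by $N$. Given $0 \to N \to E \xrightarrow{\pi} M \to 0$ of $\hat\fg$-modules, let $c_E \colon E \to E$ denote the operator by which $c$ acts. It is even and commutes with all of $\hat\fg$; it vanishes on $N$ (as $c$ kills the irreducible $N$); and its image lies in $\ker\pi = N$ (as $\pi$ is $\hat\fg$-linear and $c$ kills $M$). Hence $c_E$ descends to an even $\hat\fg$-module map $E/N \to N$, i.e., identifying $E/N$ with $M$, to an element of $\Hom_{\fg[\C[t,t^{-1}]]}(M,N)_{\bar 0}$. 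Because $M$ and $N$ are irreducible and $M \not\cong N$, this space vanishes (a nonzero even homomorphism would have trivial kernel and full image, hence be an even isomorphism), so $c_E = 0$; the $\hat\fg$-action on $E$ then factors through $\fg[\C[t,t^{-1}]]$ and the given class is inflated. Together with injectivity this gives $\Ext^1_{\hat\fg}(M,N) \cong \Ext^1_{\fg[\C[t,t^{-1}]]}(M,N)$, and the block statement follows as above.

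The step I expect to be the main obstacle is the parity bookkeeping in the surjectivity argument: one must be sure that the homomorphism induced by $c$ is genuinely \emph{even}, so that the hypothesis $M \not\cong N$ — which forbids only an even isomorphism and is compatible with $M \cong \Pi N$ — still forces it to vanish. I would use the direct argument above precisely because it isolates the even part, thereby sidestepping the need to control the transgression in the Lyndon--Hochschild--Serre spectral sequence of the central ideal $\C c$, where the cross term $\HH^1(\C c, \Hom_\C(M,N))^{\fg[\C[t,t^{-1}]]} \cong \Hom_{\fg[\C[t,t^{-1}]]}(M,N)$ need not vanish when $M \cong \Pi N$.
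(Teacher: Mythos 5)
Your proof is correct, but it takes a genuinely different route from the paper. The paper runs the Lyndon--Hochschild--Serre spectral sequence for the central ideal $\C c \subseteq \hat\fg$, extracts from its five-term exact sequence the injection $\Ext^1_{\fg[\C[t,t^{-1}]]}(M,N) \hookrightarrow \Ext^1_{\hat\fg}(M,N)$ with cokernel embedding into $\HH^1(\C c, M^*\otimes N)^{\fg[\C[t,t^{-1}]]} \cong \Hom_{\fg[\C[t,t^{-1}]]}(M,N)$, and then kills that term by Schur's lemma. You instead argue at the level of extensions: injectivity because any $\hat\fg$-splitting of an inflated sequence is automatically $\fg[\C[t,t^{-1}]]$-linear, and surjectivity because the central operator $c_E$ on any extension $E$ kills $N$, has image in $N$, and hence induces an \emph{even} $\fg[\C[t,t^{-1}]]$-map $M \to N$, which vanishes since $M \not\cong N$. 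Your approach buys two things. First, it is elementary and avoids the spectral sequence entirely. Second, your parenthetical worry about parity is well taken: since the paper's $\Hom_{\fa}(V,W)$ is the full ungraded Hom, the obstruction term $\Hom_{\fg[\C[t,t^{-1}]]}(M,N)$ in the paper's argument is \emph{not} obviously zero when $M \cong \Pi N$ (an odd isomorphism exists even though $M \not\cong N$ in the category with even morphisms), so the paper's appeal to Schur's lemma is too quick in that edge case. Your argument isolates the even part of the Hom space, which does vanish, and thereby closes this gap. The only thing the paper's method offers in exchange is that the same five-term sequence immediately controls higher $\Ext$ groups and the $M \cong N$ case, neither of which is needed for the block statement.
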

\begin{proof}
Consider the Lyndon-Hochschild-Serre spectral sequence 
\[
E_2^{p,q}
\cong \Ext^p_{\fg[\C[t, t^{-1}]]} (M, \, \HH^q(\C c, N))
\Rightarrow \Ext^{p+q}_{\hat \fg} (M,\, N).
\]
From its 5-term exact sequence, we extract the following exact sequence:
    \[
0 \to 
\Ext_{\fg[\C[t,t^{-1}]]}^1 (M,\, N) \to
\Ext_{\hat \fg}^1 (M,\, N) \to \HH^1(\C c,\, M^*\otimes N)^{\fg[\C[t,t^{-1}]]}.
    \]
Since $M$ and $N$ are trivial $\C c$-modules, then 
    \[
    \HH^1(\C c,\, M^*\otimes N)^{\fg[\C[t,t^{-1}]]}
    \cong \Hom_{\fg[\C[t,t^{-1}]]}(\C c,\, M^*\otimes N)
    \cong \Hom_{\fg[\C[t,t^{-1}]]}(M,\, N).
    \]
Moreover, since $M$ and $N$ are non-isomorphic irreducible $\fg[\C[t,t^{-1}]]$-modules, it follows that $\Hom_{\fg[\C[t,t^{-1}]]}(M,\, N) = 0$.  The result of the first statement follows.

To prove the \textit{in particular} part, notice that  $M$ and $N$ are isomorphic as $\hat \fg$-modules if and only if they are isomorphic as $\fg[\C[t,t^{-1}]]$-modules. Furthermore, it follows from the first part that if $M\ncong N$, then $\Ext^1_{\hat\fg}(M,N) \cong \Ext^1_{\fg[\C[t, t^{-1}]]}(M,N)$.  Hence $M$ and $N$ are in the same block of the category of finite-dimensional $\hat\fg$-modules if and only if they are in the same block of the category of finite-dimensional $\fg[\C[t, t^{-1}]]$-modules.
\end{proof}

%%%%%%%%%%%
\subsection{Change of Borel subalgebras}

Given an arbitrary Borel subalgebra $\fb = \fh \oplus \fn^+ \subseteq \fg$, every finite-dimensional irreducible $\fg[A]$-module is a $\fb[A]$-highest weight module with $\fb[A]$-highest weight in $\fh[A]^*$ (see the proof of \cite[Lemma~4.5]{Savage14}, which works for arbitrary $\fb$).  That is, if $V$ is an irreducible finite-dimensional $\fg[A]$-module, then there exist $\psi \in \fh[A]^*$ and a non-zero vector $v_\psi \in V$ such that $\fn^+[A] v_\psi = 0$, $H v_\psi = \psi(H)v_\psi$ for all $H \in \fh[A]$, and $V = \bU(\fg[A])v_\psi$.  In this case, $V$ will be denoted by $V_{\fb[A]} (\psi)$ (up to isomorphism, this is well-defined).

Given a set of simple roots $\Delta \subseteq R$ and an odd isotropic simple root, that is, $\alpha \in \Delta_{\bar1}$ such that $\alpha(h_\alpha) = 0$, we define the odd reflection $r_\alpha : \Delta \to R$ to be
    \[
r_\alpha (\beta) := 
     \begin{cases}
       -\alpha &\quad\text{if } \beta=\alpha, \\
       \beta + \alpha &\quad\text{if } \alpha\neq \beta\text{ and } \alpha(h_\beta)\neq 0 \text{ or } \beta(h_\alpha)\neq 0, \\
       \beta &\quad\text{if } \alpha\neq \beta\text{ and } \alpha(h_\beta)=\beta(h_\alpha)=0.
     \end{cases}
    \]
It is known $r_\alpha (\Delta)$ is also a set of simple roots of $\fg$, and that every two such sets can be obtained one from one another by a chain of even and odd reflections (see \cite[Corollary~4.5]{Ser11}).

Let $\fb \subseteq \fg$ be a distinguished Borel subalgebra, $\fb = \fh \oplus \fn^+_0 \oplus \fg_1$, and $\Delta$ be the set of simple roots corresponding to it.  Given a chain of odd isotropic simple roots, $\alpha_1 \in \Delta$, $\alpha_2 \in r_{\alpha_1}(\Delta)$, $\dotsc$, $\alpha_l \in r_{\alpha_{l-1}} \circ \dotsb \circ r_{\alpha_1}(\Delta)$, denote $\sigma := r_l \circ \dotsb \circ r_{\alpha_1}$ and denote by $\fb_\sigma$ the Borel subalgebra of $\fg$ corresponding to $\sigma(\Delta)$.  Since every two sets of simple roots of $\fg$ can be obtained one from one another by a chain of even and odd reflections, every Borel subalgebra of $\fg$ is equal to $\fb_\sigma$.  In this section, our first goal is to establish a relation between the highest weights of a finite-dimensional irreducible $\fg[A]$-module relative to Borel subalgebras that are not conjugate under the action of the Weyl group of $\fg_0$.

Given a finite-dimensional irreducible $\fg[A]$-module $V_{\fb[A]}(\psi)$, define the ideal $\Ann_A V_{\fb[A]}(\psi) := \{ a \in A \mid \psi(h \otimes a) = 0 \textup{ for all } h \in \fh\}\subseteq A$ and the support of $V_{\fb[A]}(\psi)$ to be the set $\Supp V_{\fb[A]}(\psi) = \{\sm\in \MaxSpec A \mid \Ann_A V_{\fb[A]}(\psi)\subseteq \sm\}$ (cf. \cite[Definition~3.5, Lemma~3.6 and Lemma~4.5]{Savage14}).  We will begin by analysing the case where $\Supp V_{\fb[A]}(\psi)$ consists of a unique maximal ideal $\sm$.  

Given $\lambda\in \fh^*$ and a maximal ideal $\sm \subseteq A$, recall that the evaluation map $\ev_\sm : A \to \C$ is induced by the natural projection $a \mapsto a + \sm$, and define $\lambda \otimes \ev_\sm$ to be the unique linear functional in $\fh[t]^*$ that satisfies $(\lambda \otimes \ev_\sm) (h \otimes a) = \ev_\sm(a) \lambda(h)$ for all $h \in \fh$ and $a \in A$.  Further, notice that if $V$ is a finite-dimensional irreducible $\fg[A]$-module such that $\Supp(V) = \{\sm\}$, then $V \cong \bV_A (\Theta \boxtimes \sV)$ where $\Theta = \left. \psi \right|_{\fz[A]}$ and $\kteta \subseteq \sm$.
\details{
In fact, notice that, since $\fz[A] \subseteq \fh[A]$, then $\Ann_A(V) \subseteq \kteta$.  Now, let $\mathsf n$ be a maximal ideal containing $\kteta$.  Since $\Ann_A(V) \subseteq \kteta \subseteq \mathsf n$ and $\Supp(V) = \{\sm\}$, then $\mathsf n = \sm$.  That is, $\sm$ is the unique maximal ideal that contains $\kteta$.
}
In this case, denote by $d_{\psi, \sm} := \dim (A/\kteta)$.

\begin{lem} \label{lem:chang.Borel}
Let $\fb = \fh \oplus \fn^+_0 \oplus \fg_1$ be a distinguished Borel subalgebra of $\fg$, $\Delta$ be the set of simple roots associated to $\fb$, $\psi \in \fh[A]^*$ be such that $V_{\fb[A]}(\psi)$ is a finite-dimensional irreducible $\fg[A]$-module, and $\sigma = r_{\alpha_l} \circ \cdots \circ r_{\alpha_1}$ be a chain of odd reflections on $\Delta$. If $\Supp V_{\fb[A]}(\psi) = \{ \sm \}$, then there is an isomorphism of $\fg[A]$-modules     
    \[
V_{\fb[A]}(\psi) 
\cong V_{\fb_\sigma[A]} \left( \psi - d_{\psi, \sm}(\alpha_1 + \dotsb + \alpha_l) \otimes \ev_\sm \right).
    \]
\end{lem}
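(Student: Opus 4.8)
The plan is to produce, inside the irreducible module $V:=V_{\fb[A]}(\psi)$, an explicit nonzero $\fb_\sigma[A]$-highest weight vector of the predicted weight; by irreducibility and uniqueness of highest weights (cf.\ \cite[Lemma~4.5]{Savage14}) this identifies $V$ with $V_{\fb_\sigma[A]}\bigl(\psi-d_{\psi,\sm}(\alpha_1+\dotsb+\alpha_l)\otimes\ev_\sm\bigr)$. I would first dispose of the evaluation case $d_{\psi,\sm}=1$, i.e.\ $\kteta=\sm$: there $V$ is an evaluation module at $\sm$ and the statement is the classical change-of-Borel rule for odd reflections of finite-dimensional $\fg$-modules (see \cite{Ser11}), evaluated at $\sm$. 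So assume $n:=\min\{l>0:\Theta(\fz\otimes\sm^l)=0\}>1$ with $\Theta:=\psi|_{\fz[A]}$; then by Proposition~\ref{prop:main}, Remark~\ref{rem:main} and the discussion preceding the lemma, $V\cong\bK_{A/\kteta}(\Theta\boxtimes\sV^\sm)$ with $\kteta=\sk_\Theta$, whence by Proposition~\ref{prop:dim+char} $V\cong\bLambda(\fg_{-1}[A/\kteta])\otimes_\C(\Theta\boxtimes\sV)$ as $\fg_0$-modules, with $v_\psi=1\otimes v^+$ for $v^+$ a $\fg'_0$-highest weight vector of $\sV$. Since $\Supp V=\{\sm\}$, the ring $A/\kteta$ is local Artinian with residue field $\C$ and nilpotent maximal ideal $\sm/\kteta$; hence multiplication by any $a\in A$ on $A/\kteta$ has trace $d\,\ev_\sm(a)$, where $d:=d_{\psi,\sm}=\dim_\C A/\kteta$.

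Next I would record the combinatorics. Let $\Sigma:=R^+_{\bar1}$; since $\fb$ is distinguished, $\Sigma$ is the set of weights of $\fg_1$, $R_{\bar1}=\Sigma\sqcup(-\Sigma)$, and $\fg_{\pm1}=\bigoplus_{s\in\Sigma}\fg_{\pm s}$. Every Borel subalgebra obtained from $\fb$ by odd reflections has the same set $R^+_0$ of positive even roots, and its positive odd roots form $(\Sigma\setminus S)\cup(-S)$ for a uniquely determined $S\subseteq\Sigma$; let $S$ be the subset attached to $\fb_\sigma$. Since an odd (isotropic) reflection changes the sign of exactly one odd root and fixes all others, an induction on $l$ in which the partial sums telescope yields $\alpha_1+\dotsb+\alpha_l=\sum_{s\in S}s$.

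Fix a basis $e_1,\dotsc,e_d$ of $A/\kteta$ and set $v_\sigma:=\bigl(\prod_{s\in S}\prod_{i=1}^{d}(y_s\otimes e_i)\bigr)v_\psi\in V$ (acting through $\fg[A/\kteta]$; the order of the factors matters only up to sign, as $\fg_{-1}$ is odd abelian). The $y_s\otimes e_i$, $s\in S$, $1\le i\le d$, are pairwise distinct, linearly independent, mutually anticommuting odd elements of $\fg_{-1}[A/\kteta]$, so $v_\sigma$ is a nonzero element of $\bLambda^{d|S|}(\fg_{-1}[A/\kteta])\otimes v^+$. Commuting $h\otimes a\in\fh[A]$ to the right through the product, every term in which some factor acquires an $A/\kteta$-coefficient different from its original one is killed by anticommutativity (a repeated exterior factor), so only the ``diagonal'' contributions and the action on $v^+$ survive; by the trace computation one gets $(h\otimes a)v_\sigma=\bigl(\psi(h\otimes a)-d\,\ev_\sm(a)\sum_{s\in S}s(h)\bigr)v_\sigma$. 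Hence $v_\sigma$ is an $\fh[A]$-weight vector of weight $\psi-d\bigl(\sum_{s\in S}s\bigr)\otimes\ev_\sm=\psi-d(\alpha_1+\dotsb+\alpha_l)\otimes\ev_\sm$.

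It remains to see that $v_\sigma$ is annihilated by $\fn^+_{\fb_\sigma}[A]$. From the $\fg_0$-module description, every $\fh$-weight of $V$ has the form $\psi|_\fh-\sum_{s\in\Sigma}n_s s-\sum_{\mu\in R^+_0}m_\mu\mu$ with $0\le n_s\le d$ and $m_\mu\in\Z_{\ge0}$; subtracting such a weight from $\psi|_\fh-d\sum_{s\in S}s$ gives $\sum_{s\in S}(d-n_s)(-s)+\sum_{s\in\Sigma\setminus S}n_s s+\sum_{\mu}m_\mu\mu$, which lies in $\Z_{\ge0}R^+_{\fb_\sigma}$ because $-s\in R^+_{\fb_\sigma}$ for $s\in S$, $s\in R^+_{\fb_\sigma}$ for $s\in\Sigma\setminus S$, and $R^+_0\subseteq R^+_{\fb_\sigma}$. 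Thus the weight of $v_\sigma$ dominates every $\fh$-weight of $V$ in the $\fb_\sigma$-order, and since the simple roots of $\fb_\sigma$ are linearly independent, a nonzero weight vector of maximal weight is killed by each root vector in $\fn^+_{\fb_\sigma}$, hence by $\fn^+_{\fb_\sigma}[A]$. So $v_\sigma$ is a nonzero $\fb_\sigma[A]$-highest weight vector of the claimed weight, which proves the lemma. The part requiring most care is the root-combinatorics of type-I systems underlying $\alpha_1+\dotsb+\alpha_l=\sum_{s\in S}s$ (together with the classical evaluation case $d_{\psi,\sm}=1$); once these are in hand the weight and domination computations are routine, with $d_{\psi,\sm}$ entering precisely through the trace of multiplication on the local Artinian algebra $A/\kteta$.
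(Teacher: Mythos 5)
Your overall strategy coincides with the paper's: identify $V_{\fb[A]}(\psi)$ with the irreducible Kac-like module $\bK_{A/\kteta}(\Theta\boxtimes\sV^\sm)$, exhibit inside it an explicit nonzero $\fb_\sigma[A]$-singular vector of the predicted weight, and conclude via \cite[Lemma~4.5]{Savage14}. Two of your choices are cleaner than the paper's execution: you index the product of odd lowering operators by the set $S$ of odd roots whose sign $\sigma$ reverses (the telescoping identity $\alpha_1+\dotsb+\alpha_l=\sum_{s\in S}s$ is correct), rather than by the chain itself, and you compute the $\fh[A]$-weight with an arbitrary basis of $A/\kteta$ via the trace of multiplication on the local Artinian ring, where the paper instead fixes a basis adapted to the $\sm$-adic filtration so that elements of $\sm$ act strictly triangularly. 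The nonvanishing of $v_\sigma$ via freeness of the Kac-like module over $\bLambda(\fg_{-1}[A/\kteta])$ is also fine.

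The one genuine gap is the final step, where you deduce $\fn^+_{\fb_\sigma}[A]\,v_\sigma=0$ from maximality of the $\fh$-weight ``since the simple roots of $\fb_\sigma$ are linearly independent.'' That independence holds for $\fsl(m|n)$ with $m<n$ and for $\fosp(2|2n)$, but it fails for $\fg\cong\fsl(n|n)$, which is explicitly within the paper's scope: there the restriction of $\sum_i\epsilon_i-\sum_j\delta_j$ to $\fh$ vanishes, so the $2n-1$ simple roots span only a $(2n-2)$-dimensional subspace of $\fh^*$ and satisfy a dependence with positive coefficients (for $n=2$ and the distinguished system, $\alpha_1+2\alpha_2+\alpha_3=0$ on $\fh$). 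Hence $-\beta$ may lie in $\Z_{\ge0}R^+_{\fb_\sigma}$ for a positive root $\beta$, and $\fh$-weight maximality alone does not force $(x_\beta\otimes a)v_\sigma=0$; whether your domination computation survives then depends on the extra constraints $0\le n_s\le d$, which you do not exploit. To close this case you should either refine the weights by the $\Z$-grading of $V$ by $\bLambda$-degree (equivalently, track $\fgl(n|n)$-weights, for which the simple roots are independent and your argument goes through verbatim), or argue as the paper does, killing $\fn^+_{\fb_\sigma}[A]v_\sigma$ by direct commutator computations with the factors of the product, which never uses independence of roots.
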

\begin{proof}
Throughout this proof, set $\Theta := \left. \psi \right|_{\fz[A]}$. 

Assume first that $d_{\psi, \sm} = 1$.  In this case $\kteta = \sm$, $V_{\fb[A]}(\psi)$ is an evaluation module, and $\psi = \lambda \otimes \ev_\sm$ for $\lambda = \left. \psi \right|_{\fh}$.  Thus, the fact that $V_{\fb[A]}(\psi) \cong V_{\fb_\alpha[A]} \left( \psi - d_{\psi, \sm}(\alpha_1 + \dotsb + \alpha_l) \otimes \ev_\sm \right)$ follows from the fact that $\psi - d_{\psi, \sm}(\alpha_1 + \dotsb + \alpha_l) \otimes \ev_\sm = (\lambda - \alpha_1 - \dotsb - \alpha_l) \otimes \ev_\sm$ and \cite[Lemma~10.2]{Ser11}.

Assume now that $d_{\psi, \sm} > 1$, or equivalently, that $\kteta \subsetneq \sm$.  Thus, in this case, it follows from Theorem~\ref{theo:class.irreps} that $V_{\fb[A]}(\psi) \cong \bK_{A/\kteta} (\Theta \boxtimes \sV^\sm)$, where $\sV$ is the unique finite-dimensional irreducible $\fg_0'$-module of highest weight $\left. \psi \right|_{\fh'}$.  In order to show that $\bK_{A/\kteta}(\Theta \boxtimes \sV^\sm)$ is isomorphic to $V_{\fb_\sigma[A]} \left( \psi - d_{\psi, \sm}(\alpha_1 + \dotsb + \alpha_l) \otimes \ev_\sm \right)$, we will construct a highest-weight vector, with respect to $\fb_\sigma[A]$, of weight $\psi - d_{\psi, \sm}(\alpha_1 + \dotsb + \alpha_l) \otimes \ev_\sm$ inside $\bK_{A/\kteta}(\Theta \boxtimes \sV^\sm)$.  In order to do that, fix an ordered basis $\{ \bar{a_1}, \dotsc, \bar{a_n}\}$ for $A/\kteta$ such that $b {\bar a}_i \in {\rm span}_\C \{\bar{a_{i+1}}, \dotsc, \bar{a_n} \}$ and $b \bar{a_n} = 0$ for all $b \in \sm$ and $i \in \{1, \dotsc, n-1\}$.
\details{
In order to construct one such basis, consider the filtration on $A/\kteta$ induced by the powers of the maximal ideal $\sm$, namely, $\dotsb \subseteq \sm^i/\kteta \subseteq \dotsb \subseteq \sm/\kteta \subseteq A/\kteta$.  The graded algebra associated to this filtration is defined to be ${\rm Gr}(A/\kteta) := \bigoplus_{i \ge 0} \sm^i / \sm^{i+1}$.  Since $\Supp V_{\fb[A]}(\psi) = \{\sm\}$, then there exists $p \ge 0$ such that $\sm^p \subseteq \kteta$; and thus, $\sm^q/\kteta = 0$ for all $q \ge p$.  Now, choose a basis $\mathsf B$ for ${\rm Gr}(A)$ by choosing a basis for each vector space $\sm^i/\sm^{i+1}$, $i \in \{0, \dotsc, p-1\}$.  Then, for each $b \in \mathsf B$, choose a representative in $A/\kteta$.  By construction, the resulting set will be a basis for $A/\kteta$ with the desired property.
}
Now, let $k_\psi$ be a non-zero vector in $\bK_{A/\kteta}(\Theta \boxtimes \sV^\sm)$ such that $\fn^+[A] k_\psi = 0$, $H k_\psi = \psi(H)k_\psi$ for all $H \in \fh[A]$, and $\bK_{A/\kteta}(\Theta \boxtimes \sV^\sm) = \bU(\fg[A])k_\psi$.  Also let $Y := (y_{\alpha_l} \otimes \overline{a_1}) \dotsm (y_{\alpha_l} \otimes \overline{a_n}) \dotsm (y_{\alpha_1} \otimes \overline{a_1}) \dotsm (y_{\alpha_1} \otimes \overline{a_n})$ and $u_\psi := Y k_\psi$.  Notice that $u_\psi$ is nonzero and that
    \[
    (x_\beta \otimes a) u_\psi = (y_{\alpha_i} \otimes a) u_\psi = 0, \quad
    (h \otimes b) u_\psi = \psi(h \otimes b) u_\psi \quad \textup{and} \quad
    h u_\psi = (\lambda - d_{\psi, \sm} (\alpha_1 + \dotsb + \alpha_l))(h) u_\psi,
    \]
for all $\beta \in \Delta \setminus \{\alpha_1, \dotsc, \alpha_l\}$, $i \in \{1, \dotsc, l\}$, $a \in A$, $h \in \fh$ and $b \in \sm$.
\details{
In order to show that $u_\psi$ is nonzero, denote $\dim \fg_{-1}[A/\kteta] =: d$ and  notice that $Y$ is a sub-expression of a nonzero element in $\bLambda^d (\fg_{-1}[A/\kteta])$.  In fact, since $d = \dim \fg_{-1}[A/\kteta]$, every nonzero element in $\bLambda^d (\fg_{-1}[A/\kteta])$ is a scalar multiple of $p := (y_{\beta_1} \otimes \overline{a_1}) \dotsm (y_{\beta_1} \otimes \overline{a_n}) \dotsm (y_{\beta_\ell} \otimes \overline{a_1}) \dotsm (y_{\beta_\ell} \otimes \overline{a_n})$, where $\{ \beta_1, \dotsc, \beta_\ell \} = \Delta$.  Hence, $p = qY$, where $q = \pm (y_{\gamma_1} \otimes \overline{a_1}) \dotsm (y_{\gamma_1} \otimes \overline{a_n}) \dotsm (y_{\gamma_m} \otimes \overline{a_1}) \dotsm (y_{\gamma_m} \otimes \overline{a_n})$, where $\{\gamma_1, \dotsc, \gamma_m \} = \Delta \setminus \{ \alpha_1, \dotsc, \alpha_l \}$. Thus, if follows from successively applying Lemma~\ref{lem:comm.rels} that $p^\star q u_\psi = p^\star p k_\psi$ is a nonzero scalar multiple of $k_\psi$ (compare with the last paragraph of the proof of Proposition~\ref{prop:main}).  This shows that $u_\psi$ is nonzero.

In order to show that $(x_\beta \otimes a) u_\psi = 0$ for all $a \in A$, first notice that
\begin{align*}
(x_\beta & \otimes a) u_\psi \\
&= (x_\beta \otimes a) (y_{\alpha_l} \otimes \overline{a_1}) \dotsm (y_{\alpha_l} \otimes \overline{a_n}) \dotsm (y_{\alpha_1} \otimes \overline{a_1}) \dotsm (y_{\alpha_1} \otimes \overline{a_n}) k_\psi \\
&= \sum_{i=1}^l \sum_{j=1}^n (y_{\alpha_l} \otimes \overline{a_1}) \dotsm (y_{\alpha_l} \otimes \overline{a_n}) \dotsm \widehat{(y_{\alpha_i} \otimes \overline{a_j})} \left( [x_\beta, y_{\alpha_i}] \otimes \overline{a a_j} \right) \dotsm (y_{\alpha_1} \otimes \overline{a_1}) \dotsm (y_{\alpha_1} \otimes \overline{a_n}) k_\psi.
\end{align*}
Then notice that $[x_\beta, y_{\alpha_i}] = 0$ for all $\beta \in \sigma(\Delta)$, since $\beta$ and $\alpha_i$ are simple roots and thus $\beta - \alpha_i \notin R$.  

In order to show that $(y_{\alpha_i} \otimes a) u_\psi = 0$ for all $i \in \{1, \dotsc, l\}$ and $a \in A$, first notice that
\[
(y_{\alpha_i} \otimes a) u_\psi
= (y_{\alpha_i} \otimes a) (y_{\alpha_l} \otimes \overline{a_1}) \dotsm (y_{\alpha_l} \otimes \overline{a_n}) \dotsm (y_{\alpha_1} \otimes \overline{a_1}) \dotsm (y_{\alpha_1} \otimes \overline{a_n}) k_\psi.
\]
Then notice that, since $\alpha_i \in \{\alpha_1, \dotsc, \alpha_l\}$ and $\{ \overline{a_1}, \dotsc, \overline{a_n} \}$ is a basis of $A/\kteta$, then, for each $i \in \{1, \dotsc, l\}$, there exist $\mu_{i,1}, \dotsc, \mu_{i,n} \in \C$ such that $y_{\alpha_i} \otimes a = \sum_{j=1}^n \mu_{i,j} (y_{\alpha_i} \otimes \overline{a_j})$.  Since $y_{\alpha_i} \otimes \overline{a_j}$ is a factor of $Y$ for all $i \in \{1, \dotsc, n\}$ and $j \in \{1, \dotsc, n\}$, it follows that $Y (y_{\alpha_i} \otimes a) = 0$.

In order to show that $(h \otimes b) u_\psi = \psi(h \otimes b) u_\psi$ for all $h \in \fh$ and $b \in \sm$, first notice that 
\begin{align*}
(h & \otimes b) u_\psi \\
&= (h \otimes b) (y_{\alpha_l} \otimes \overline{a_1}) \dotsm (y_{\alpha_l} \otimes \overline{a_n}) \dotsm (y_{\alpha_1} \otimes \overline{a_1}) \dotsm (y_{\alpha_1} \otimes \overline{a_n}) k_\psi \\
&= \sum_{i=1}^l \sum_{j=1}^n (y_{\alpha_l} \otimes \overline{a_1}) \dotsm (y_{\alpha_l} \otimes \overline{a_n}) \dotsm \widehat{(y_{\alpha_i} \otimes \overline{a_j})} \left( [h, y_{\alpha_i}] \otimes \overline{b a_j} \right) \dotsm (y_{\alpha_1} \otimes \overline{a_1}) \dotsm (y_{\alpha_1} \otimes \overline{a_n}) k_\psi \\
&{\quad}+ Y (h \otimes b) k_\psi \\
&= \sum_{i=1}^l \sum_{j=1}^n (y_{\alpha_l} \otimes \overline{a_1}) \dotsm (y_{\alpha_l} \otimes \overline{a_n}) \dotsm \widehat{(y_{\alpha_i} \otimes \overline{a_j})} \left( -\alpha_i(h) y_{\alpha_i} \otimes \overline{b a_j} \right) \dotsm (y_{\alpha_1} \otimes \overline{a_1}) \dotsm (y_{\alpha_1} \otimes \overline{a_n}) k_\psi \\
&{\quad}+ \psi(h \otimes b) u_\psi.
\end{align*}
Then notice that, since $b \in \sm$ and $\{ \overline{a_1}, \dotsc, \overline{a_n} \}$ is a basis of $A/\kteta$ such that $b \overline{a_j} \in {\rm span}_\C \{\bar{a_{j+1}}, \dotsc, \bar{a_n} \}$ for all $j \in \{1, \dotsc, n\}$, then, for each $i \in \{1, \dotsc, l\}$ and $j \in \{1, \dotsc, n\}$, there exist $\mu_{i,j+1}, \dotsc, \mu_{i,n} \in \C$ such that $y_{\alpha_i} \otimes \overline{b a_j} = \sum_{k=j+1}^n \mu_{i,k} (y_{\alpha_i} \otimes \overline{a_k})$.  Since $y_{\alpha_i} \otimes \overline{a_k}$ is a factor of $Y$ distinct from $y_{\alpha_i} \otimes \overline{a_j}$, then $(y_{\alpha_l} \otimes \overline{a_1}) \dotsm (y_{\alpha_l} \otimes \overline{a_n}) \dotsm \widehat{(y_{\alpha_i} \otimes \overline{a_j})} \left( y_{\alpha_i} \otimes \overline{b a_j} \right) \dotsm (y_{\alpha_1} \otimes \overline{a_1}) \dotsm (y_{\alpha_1} \otimes \overline{a_n}) = 0$  for all $i \in \{1, \dotsc, l\}$, $j \in \{1, \dotsc, n\}$ and $k \in \{j+1, \dotsc, n\}$.

Finally, in order to show that $h u_\psi = (\lambda - d_{\psi, \sm}(\alpha_1 + \dotsb + \alpha_n))(h) u_\psi$ for all $h \in \fh$, notice that 
\begin{align*}
h u_\psi
&= h (y_{\alpha_l} \otimes \overline{a_1}) \dotsm (y_{\alpha_l} \otimes \overline{a_n}) \dotsm (y_{\alpha_1} \otimes \overline{a_1}) \dotsm (y_{\alpha_1} \otimes \overline{a_n}) k_\psi \\
&= \sum_{i=1}^l \sum_{j=1}^n (y_{\alpha_l} \otimes \overline{a_1}) \dotsm (y_{\alpha_l} \otimes \overline{a_n}) \dotsm \widehat{(y_{\alpha_i} \otimes \overline{a_j})} \left( [h, y_{\alpha_i}] \otimes \overline{a_j} \right) \dotsm (y_{\alpha_1} \otimes \overline{a_1}) \dotsm (y_{\alpha_1} \otimes \overline{a_n}) k_\psi \\
&{\quad}+ Y h k_\psi \\
&= \sum_{i=1}^l \sum_{j=1}^n (y_{\alpha_l} \otimes \overline{a_1}) \dotsm (y_{\alpha_l} \otimes \overline{a_n}) \dotsm \widehat{(y_{\alpha_i} \otimes \overline{a_j})} \left( -\alpha_i(h) y_{\alpha_i} \otimes \overline{a_j} \right) \dotsm (y_{\alpha_1} \otimes \overline{a_1}) \dotsm (y_{\alpha_1} \otimes \overline{a_n}) k_\psi \\
&{\quad}+ \lambda(h) u_\psi \\
&= (\lambda - d_{\psi, \sm}(\alpha_1 + \dotsb + \alpha_n))(h) u_\psi.
\end{align*}
}
This shows that, with respect to $\fb_\sigma [A]$, $u_\psi$ is a highest-weight vector of weight $\psi - d_{\psi, \sm}(\alpha_1 + \dotsb + \alpha_l) \otimes \ev_\sm$. Since $\bK_{A/\kteta}(\Theta \boxtimes \sV^\sm)$ is assumed to be irreducible, it follows from \cite[Lemma~4.5]{Savage14} that $\bK_{A/\kteta}(\Theta \boxtimes \sV^\sm)$ is isomorphic to $V_{\fb_\sigma[A]} \left( \psi - d_{\psi, \sm}(\alpha_1 + \dotsb + \alpha_l) \otimes \ev_\sm \right)$ as a $\fg[A]$-module.
\end{proof}

\begin{rem}
In the case where $V_{\fb[A]}(\psi)$ is the trivial $\fg[A]$-module, then $V_{\fb_\sigma[A]}(\psi) \cong V_{\fb[A]}(\psi) \cong \C$.
\end{rem}

We now use Lemma~\ref{lem:chang.Borel} to establish a relation between the highest weights of a general fi\-ni\-te\--dimen\-sion\-al irreducible $\fg[A]$-module relative to distinct Borel subalgebras.

\begin{prop} \label{prop:chang.Borel}
Let $\fb = \fh \oplus \fn^+_0 \oplus \fg_1$ be a distinguished Borel subalgebra of $\fg$, $\psi \in \fh[A]^*$ be such that $V_{\fb[A]}(\psi)$ is a finite-dimensional irreducible $\fg[A]$-module, and $\sigma = r_{\alpha_l} \circ \cdots \circ r_{\alpha_1}$ be a chain of odd reflections.  If the support of $V_{\fb[A]}(\psi)$ is $\{ \sm_1, \dotsc, \sm_n \}$, then there is an isomorphism of $\fg[A]$-modules
\[
V_{\fb[A]}(\psi) 
\cong V_{\fb_\sigma[A]} \left( \psi - d_{\psi, \sm_1} (\alpha_1 + \dotsb + \alpha_l) \otimes \ev_{\sm_1} - \dotsb - d_{\psi, \sm_n} (\alpha_1 + \dotsb + \alpha_l) \otimes \ev_{\sm_n} \right).
\]
\end{prop}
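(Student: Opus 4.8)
The plan is to bootstrap Lemma~\ref{lem:chang.Borel} from the case of a single maximal ideal to the general case, by first splitting $V_{\fb[A]}(\psi)$, via Theorem~\ref{theo:class.irreps}, into a tensor product of modules each supported at one maximal ideal.

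First I would apply Theorem~\ref{theo:class.irreps}: after discarding any trivial tensor factor it yields an isomorphism of $\fg[A]$-modules $V_{\fb[A]}(\psi) \cong V^{(1)} \otimes \dotsb \otimes V^{(n)}$, where each $V^{(i)}$ is either a pulled-back irreducible Kac-like module or a nontrivial evaluation module, the maximal ideals $\sm_1, \dotsc, \sm_n$ are distinct with $\Supp V^{(i)} = \{\sm_i\}$, and $\{\sm_1, \dotsc, \sm_n\}$ coincides with $\Supp V_{\fb[A]}(\psi)$. Each $V^{(i)}$ is a finite-dimensional irreducible $\fg[A]$-module, hence a $\fb[A]$-highest-weight module by (the proof of) \cite[Lemma~4.5]{Savage14}; write $V^{(i)} \cong V_{\fb[A]}(\psi_i)$. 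Since $\fg[A]$ acts on a tensor product through the coproduct-type rule $x(v_1 \otimes \dotsb \otimes v_n) = \sum_j \pm\, v_1 \otimes \dotsb \otimes xv_j \otimes \dotsb \otimes v_n$, a pure tensor of $\fb[A]$-highest-weight vectors of the $V^{(i)}$ is annihilated by $\fn^+[A]$ and is an $\fh[A]$-weight vector of weight $\psi_1 + \dotsb + \psi_n$; as $V_{\fb[A]}(\psi)$ is irreducible this pure tensor generates it, whence $\psi = \psi_1 + \dotsb + \psi_n$. The quantity $d_{\psi,\sm_i}$ appearing in the statement is, by definition, the invariant $d_{\psi_i,\sm_i}$ attached in Lemma~\ref{lem:chang.Borel} to the single-support module $V^{(i)}$.

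Next I would apply Lemma~\ref{lem:chang.Borel} to each factor $V^{(i)}$, whose support is the single maximal ideal $\sm_i$; it gives an isomorphism of $\fg[A]$-modules $V^{(i)} \cong V_{\fb_\sigma[A]}\bigl(\psi_i - d_{\psi,\sm_i}(\alpha_1 + \dotsb + \alpha_l) \otimes \ev_{\sm_i}\bigr)$, the case $d_{\psi,\sm_i}=1$ (i.e.\ $V^{(i)}$ an evaluation module) being already subsumed there. Tensoring these isomorphisms together yields
\[
V_{\fb[A]}(\psi) \;\cong\; \bigotimes_{i=1}^n V_{\fb_\sigma[A]}\!\left(\psi_i - d_{\psi,\sm_i}(\alpha_1 + \dotsb + \alpha_l) \otimes \ev_{\sm_i}\right)
\]
as $\fg[A]$-modules. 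Running the weight-additivity argument once more, now with respect to $\fb_\sigma[A]$ in place of $\fb[A]$: the pure tensor of the $\fb_\sigma[A]$-highest-weight vectors of the factors is annihilated by $\fn_\sigma^+[A]$ and has $\fh[A]$-weight $\sum_{i=1}^n ( \psi_i - d_{\psi,\sm_i}(\alpha_1 + \dotsb + \alpha_l) \otimes \ev_{\sm_i} ) = \psi - \sum_{i=1}^n d_{\psi,\sm_i}(\alpha_1 + \dotsb + \alpha_l) \otimes \ev_{\sm_i}$, and it generates the (irreducible) module. By the uniqueness clause of \cite[Lemma~4.5]{Savage14} one then concludes that $V_{\fb[A]}(\psi)$ is isomorphic, as a $\fg[A]$-module, to $V_{\fb_\sigma[A]}\bigl(\psi - d_{\psi,\sm_1}(\alpha_1 + \dotsb + \alpha_l) \otimes \ev_{\sm_1} - \dotsb - d_{\psi,\sm_n}(\alpha_1 + \dotsb + \alpha_l) \otimes \ev_{\sm_n}\bigr)$, which is the assertion.

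I do not expect a real obstacle: once Lemma~\ref{lem:chang.Borel} is in hand the argument is formal. The two points that should be stated carefully are the additivity of $\fb[A]$- and $\fb_\sigma[A]$-highest weights under tensor products — immediate from the action of $\fg[A]$ on tensor products together with the irreducibility of $V_{\fb[A]}(\psi)$, which makes the relevant pure tensor cyclic — and the identification of the global datum $d_{\psi,\sm_i}$ in the statement with the local invariant $d_{\psi_i,\sm_i}$ of the $i$-th tensor factor, to which Lemma~\ref{lem:chang.Borel} applies verbatim. The trivial-module case degenerates correctly, the tensor decomposition being empty and $\psi$ left unchanged.
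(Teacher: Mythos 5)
Your proof is correct and follows essentially the same route as the paper's: decompose $V_{\fb[A]}(\psi)$ into a tensor product of irreducible factors each supported at a single maximal ideal, apply Lemma~\ref{lem:chang.Borel} factor by factor, and add the resulting highest weights. The only cosmetic difference is that the paper obtains the single-support factorization $V_{\fb[A]}(\psi) \cong V_{\fb[A]}(\psi_1) \otimes \dotsm \otimes V_{\fb[A]}(\psi_n)$ with $\psi = \psi_1 + \dotsb + \psi_n$ directly from \cite[Corollary~4.14]{Savage14}, whereas you derive it from Theorem~\ref{theo:class.irreps} together with an explicit highest-weight-vector additivity argument.
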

\begin{proof}
First recall that, since $A$ is a finitely-generated $\C$-algebra, the support of any fi\-ni\-te\--dimen\-sion\-al irreducible $\fg[A]$-module is finite (see \cite[Theorem~4.16]{Savage14}).  Hence, there exist pairwise-distinct maximal ideals $\sm_1, \dotsc, \sm_n \subseteq A$ such that $\Supp V_{\fb[A]}(\psi) = \{\sm_1, \dotsc, \sm_n\}$.  Moreover, there exist $\psi_1, \dotsc, \psi_n \in \fh[A]^*$ such that $\Supp V_{\fb[A]}(\psi_i) = \{\sm_i\}$ for all $i \in \{1, \dotsc, n\}$ and $V_{\fb[A]}(\psi) = V_{\fb[A]}(\psi_1) \otimes \dotsm \otimes V_{\fb[A]}(\psi_n)$ (see \cite[Corollary~4.14]{Savage14}). Now the statement follows from Lemma~\ref{lem:chang.Borel}.
\end{proof}

%%% BIBLIOGRAPHY

\end{document}